\numberwithin{equation}{section}
\newtheorem{theorem}{Theorem}[section]
\newtheorem{lemma}[theorem]{Lemma}
\newtheorem{proposition}[theorem]{Proposition}
\theoremstyle{definition}
\newtheorem{remark}[theorem]{Remark}
\newtheorem{definition}[theorem]{Definition}
\newtheorem{example}[theorem]{Example}
\theoremstyle{remark}
\newcommand{\R}{{\mathbb R}}
\newcommand{\msckw}{%
\footnotetext{\hspace{-0.75cm} 2010 {\it Mathematics Subject Classification}. 
{42B20, 42B25, 42B35, 46E30}
\endgraf
\hspace{-0.5cm} {\it Key words and phrases.} 
generalized fractional maximal function, generalized fractional integral, Orlicz spaces, generalized Orlicz-Morrey spaces
}
}
\begin{document}
\title{Generalized fractional maximal and integral operators on Orlicz
and generalized Orlicz--Morrey spaces of the third kind \msckw}
\author{Fatih~Deringoz,  Vagif~S.~Guliyev, Eiichi~Nakai, \\ Yoshihiro~Sawano and Minglei~Shi}
\date{}

\maketitle

\begin{abstract}
In the present paper, we will characterize the boundedness of the generalized fractional integral operators $I_{\rho}$
and
the generalized fractional maximal operators $M_{\rho}$ on Orlicz spaces, respectively.
Moreover, we will give a characterization for the Spanne-type boundedness
and
the Adams-type boundedness of the operators $M_{\rho}$ and $I_{\rho}$ on generalized Orlicz--Morrey spaces, respectively.
Also we give criteria for the weak versions of the Spanne-type boundedness
and
the Adams-type boundedness of the operators $M_{\rho}$ and $I_{\rho}$ on generalized Orlicz--Morrey spaces.
\end{abstract}

\section{Introduction}
\noindent

The aim of this paper is to obtain the necessary conditions
and the sufficient condtions
for
the generalized fractional maximal operator
$M_\rho$ and
the generalized fractional integral operator
$I_\rho$ to be bounded on
Orlicz spaces.
Our results can be extended
to
generalized Orlicz--Morrey spaces of the third kind
which will be defined later in this paper.

Let ${\mathbb R}^n$ be the $n$-dimensional Euclidean space.
For a function $\rho : (0,\infty) \to (0,\infty)$, let $I_{\rho}$
be the generalized fractional integral operator:
$$
I_{\rho}f(x) = \int_{\mathbb{R}^n }\frac{\rho(|x-y|)}{|x-y|^n} \, f(y) dy.
$$
Here $f$ is a suitable measurable function.
Note that this type of generalization goes back to \cite{Pustylnik99}.
If $\rho(r) = r^{\alpha}$, $0<\alpha<n$, then $I_{\rho}$ is the fractional integral operator or the Riesz potential and denoted by $I_{\alpha}$.
Hereafter, we assume that
\begin{equation}\label{int rho}
\int_{0}^{1}\frac{\rho(t)}{t}dt<\infty
\end{equation}
so that the fractional integrals $I_{\rho}f$ are well defined, at least for characteristic functions of balls.
The operator $I_{\rho}$ was introduced in \cite{Nakai2001Taiwan}
and
some partial results were announced in
\cite{Nakai2000ISAAC}.
We refer to \cite{Mizuta-Nakai-Ohno-Shimomura2010JMSJ} for the boundedness of $I_{\rho}$
on Orlicz space $L^{\Phi}(\Omega)$ with bounded domain $\Omega\subset\R^n$.
See also \cite{Nakai2001SCMJ,Nakai2002Lund,Nakai2004KIT,Nakai-Sumitomo2001SCMJ}
for the boundedness of $I_\rho$ on various spaces.
In these papers we assumed that
$\rho$ satisfies the doubling condition:
\begin{equation}\label{rho doubl}
 \frac{1}{C_1}\le\frac{\rho(r)}{\rho(s)}\le C_1,
 \quad\text{if} \ \ \frac12\le\frac rs\le2,
\end{equation}
and that $r\mapsto\rho(r)/r^n$ is almost decreasing:
\begin{equation}\label{rho/r^n}
 \frac{\rho(s)}{s^n}\le C_2\frac{\rho(r)}{r^n},
 \quad\text{if} \ \ r<s,
\end{equation}
where $C_1$ and $C_2$ are positive constants independent of $r,s\in(0,\infty)$.
Under these conditions we proved the boundedness of $I_{\rho}$ on Orlicz spaces in \cite{Nakai2000ISAAC,Nakai2001Taiwan}.

In this paper, instead of these conditions, we assume that
there exist positive constants $C$, $k_1$ and $k_2$ with $k_1<k_2$ such that, for all $r>0$,
\begin{equation}\label{sup rho}
 \sup_{r/2\le t\le r}\rho(t)
 \le
 C\int_{k_1r}^{k_2r}\frac{\rho(t)}{t}\,dt=:\tilde{\rho}(r).
\end{equation}
The condition \eqref{sup rho} was considered in \cite{Perez1994}
and also used in \cite{SawSugTan}. 
If $\rho$ satisfies \eqref{rho doubl} or \eqref{rho/r^n},
then $\rho$ satisfies \eqref{sup rho}.
Let
\begin{equation}\label{rho exmp}
 \rho(r)=
\begin{cases}
 r^{n}(\log(e/r))^{-1/2}, & 0<r<1,\\
 e^{-(r-1)}, & 1\le r<\infty.
\end{cases}
\end{equation}
Then $\rho$ satisfies \eqref{int rho} and \eqref{sup rho},
but fails \eqref{rho doubl} and \eqref{rho/r^n}.
Therefore, the results in this paper improve ones in \cite{Nakai2001Taiwan}.
Moreover, we give necessary and sufficient conditions for the boundedness of $I_{\rho}$
not only on Orlicz spaces but also on Orlicz--Morrey spaces
of the third kind.

Next, we define the generalized fractional maximal operator $M_{\rho}$.
For a function $\rho : (0,\infty) \to (0,\infty)$, let
\begin{equation}\label{Mr}
 M_{\rho} f(x) = \sup\limits_{r>0} \frac{\rho(r)}{|B(x,r)|} \int_{B(x,r)} |f(y)| dy,
\end{equation}
where $|G|$ is the Lebesgue measure of a measurable set $G \subset \R^n$.
We do not assume \eqref{int rho} on the function $\rho$ in \eqref{Mr}.
Instead we suppose that
$\rho$ is an increasing function such that
$r \in (0,\infty) \mapsto r^{-n}\rho(r) \in (0,\infty)$
is decreasing.

If $\rho\equiv1$, then $M_{\rho}$ is the Hardy-Littlewood maximal operator denoted by $M$.
If $\rho(r)=r^{\alpha}$, then $M_{\rho}$ is the usual fractional maximal operator denoted by $M_{\alpha}$.
We give
some necessary conditions and
some sufficient conditions for the boundedness of $M_{\rho}$
on Orlicz and Orlicz--Morrey spaces.


The structure of the remaining part of the present paper is as follows:
First we recall Young functions and Orlicz spaces in Section~\ref{sec:Young}.
In Section~\ref{sec:Ir Orl}, we
investigate the boundedness of generalized fractional integrals on Orlicz spaces.
We will give a necessary and sufficient condition for the boundedness of the generalized
fractional maximal operators on Orlicz spaces in Section~\ref{sec:Mr Orl}.
In Section~\ref{sec:Orl-Mor} we
discuss some properties of generalized Orlicz--Morrey spaces of the third kind.
Moreover, we will give necessary and sufficient conditions
for the Spanne and Adams-type boundedness of the generalized fractional integral operators
on generalized Orlicz--Morrey spaces of the third kind in Section~\ref{sec:Ir Orl-Mor}.
Finally, in Section~\ref{sec:Mr Orl-Mor}
we give criteria for the boundedness of the generalized fractional maximal operators
on generalized Orlicz--Morrey spaces of the third kind.

\section{Young functions and Orlicz spaces}\label{sec:Young}

We recall the definition of Young functions.
\begin{definition}\label{def2} A function $\Phi : [0,\infty] \rightarrow [0,\infty]$ is called a Young function 
if $\Phi$ is convex, left-continuous, $\lim\limits_{r\rightarrow +0} \Phi(r) = \Phi(0) = 0$ 
and $\lim\limits_{r\rightarrow \infty} \Phi(r) = \Phi(\infty)=\infty$.
\end{definition}
From the non-negativity, convexity and $\Phi(0) = 0$ it follows that any Young function is increasing.
We denote by $\mathcal{Y}$
the set of all Young  functions such that
\begin{equation*}
0<\Phi(r)<\infty \qquad \text{for} \qquad 0<r<\infty.
\end{equation*}
If $\Phi \in  \mathcal{Y}$, then $\Phi$ is absolutely continuous on every compact interval in $[0,\infty )$
and bijective from $[0,\infty )$ to itself.

Next we recall the generalized inverse of Young function $\Phi$
in the sense of O'Neil \cite[Definition~1.2]{ONeil1965}.
For a Young function $\Phi$ and  $0 \leq s \leq \infty $, let
\begin{equation*}
 \Phi^{-1}(s)=\inf\{r\geq 0: \Phi(r)>s\},
 \quad
 \text{where} \ \inf\emptyset=\infty.
\end{equation*}
Note that if $s<\infty$, then so is $\Phi^{-1}(s)$.
As in \cite[p. 301, Remarks]{ONeil1965},
we always have $\Phi^{-1}(\infty)=\infty$.
An important inequality we use is
\[
\Phi(\Phi^{-1}(r)) \le r \le \Phi^{-1}(\Phi(r)).
\]
See \cite[Property 1.3]{ONeil1965}.
Then
$\Phi^{-1}(s)$ is finite for all $s\in[0,\infty)$,
continuous on $(0,\infty)$
and right continuous at $s=0$.
Observe that  $\Phi^{-1}(\Phi(r)) = r$
if  $0 < \Phi(r) < \infty$
and that $\Phi(\Phi^{-1}(s))=s$
if $s=\Phi(r) \in (0,\infty)$ if $s \in [0,\Phi(\inf\{r>0\,:\,\Phi(r)=\infty\})]$.
Furthermore,
if $\Phi \in  \mathcal{Y}$,
then $\Phi^{-1}$ is the usual inverse function of $\Phi$.

\begin{remark}\label{dfghj}
For a Young function $\Phi$,
its inverse function $\Phi^{-1}$ is increasing and concave.
Hence, we have the following properties:
$$
\left\{
\begin{array}{ccc}
\Phi^{-1}(t)\ge \Phi^{-1}(\alpha t)\geq \alpha \Phi^{-1}(t),
& \text{  if } &0<\alpha<1 \\
\Phi^{-1}(t) \le \Phi^{-1}(\alpha t)\leq \alpha \Phi^{-1}(t),&\text{  if }& \alpha>1.
\end{array}
\right.
$$
Since $\Phi^{-1}$ is increasing,
the left inequality is clear.
In particular, $\Phi$ satisfies the doubling condition:
$\Phi^{-1}(2s)\le2\Phi^{-1}(s)$ for all $s\ge0$.

In fact for $0<\alpha<1$,
\begin{align*}
\Phi^{-1}(\alpha t)
&=
\inf\{s \ge 0\,:\,\Phi(s)>\alpha t\}.
\end{align*}
Since
$\displaystyle \dfrac{1}{\alpha}\Phi(s) \le \Phi\left(\dfrac{s}{\alpha}\right)$,
we have
\[
\Phi^{-1}(\alpha t)
\ge
\inf\left\{s \ge 0\,:\,\Phi\left(\dfrac{s}{\alpha}\right)>\alpha t\right\}
=
\alpha\inf\{s \ge 0\,:\,\Phi(s)> t\}=\alpha \Phi^{-1}(t).
\]
The right inequality for $\alpha>1$ is a consequence
of the one for $0<\alpha<1$.
\end{remark}

As in \cite[Property 1.6]{ONeil1965},
we have
\begin{equation}\label{2.3}
r\leq \Phi^{-1}(r)\widetilde{\Phi}^{-1}(r)\leq 2r \qquad \text{for } r\geq 0,
\end{equation}
where $\widetilde{\Phi}(r)$ is the complementary function of $\Phi$ defined by
\begin{equation*}
\widetilde{\Phi}(r)=\left\{
\begin{array}{ccc}
\sup\{rs-\Phi(s): s\in  [0,\infty )\}
& , & r\in  [0,\infty ) \\
\infty &,& r=\infty .
\end{array}
\right.
\end{equation*}
Then $\widetilde\Phi$ is also a Young function and $\widetilde{\widetilde\Phi}=\Phi$.

A Young function $\Phi$ is said to satisfy the
 $\Delta_2$-condition, denoted also by $\Phi \in  \Delta_2$, if
$$
\Phi(2r)\le C\Phi(r), \qquad r>0
$$
for some $C\ge1$. If $\Phi \in  \Delta_2$, then $\Phi \in  \mathcal{Y}$. 
A Young function $\Phi$ is said to satisfy the $\nabla_2$-condition, denoted also by  $\Phi \in  \nabla_2$, if
$$
\Phi(r)\leq \frac{1}{2C}\Phi(Cr),\qquad r\geq 0
$$
for some $C>1$.

We denote by $\chi_{G}$
the characteristic function of the set $G\subset{\mathbb R}^n$.

\begin{definition}[Orlicz Space]
For a Young function $\Phi$, the Orlicz space $L^{\Phi}({\mathbb R}^n)$ is defined by:
$$L^{\Phi}({\mathbb R}^n)=\left\{f\in  L^1_{\rm loc}({\mathbb R}^n): \int _{{\mathbb R}^n}\Phi(k|f(x)|)dx<\infty
 \text{ for some $k>0$  }\right\}.$$
The  space $L^{\Phi}_{\rm loc}({\mathbb R}^n)$ is defined as
the set of all measurable functions $f$ such that  $f\chi_{_B}\in  L^{\Phi}({\mathbb R}^n)$ for all balls $B \subset {\mathbb R}^n$.
\end{definition}

If $\Phi$ is a Young function,
then $L^{\Phi}({\mathbb R}^n)$ is a Banach space under the Luxemburg-Nakano norm
$$\|f\|_{L^{\Phi}}=\inf\left\{\lambda>0:\int _{{\mathbb R}^n}\Phi\Big(\frac{|f(x)|}{\lambda}\Big)dx\leq 1\right\}.$$
For example,
if $\Phi(r)=r^{p},\, 1\le p<\infty $, then $L^{\Phi}({\mathbb R}^n)=L^{p}({\mathbb R}^n)$.
If $\Phi(r)=0,\,(0\le r\le 1)$ and $\Phi(r)=\infty ,\,(r> 1)$, then $L^{\Phi}({\mathbb R}^n)=L^\infty ({\mathbb R}^n)$.

For a measurable set $\Omega\subset \mathbb{R}^{n}$, a measurable function $f$ and $t>0$, let
$$
m(\Omega,\ f,\ t)=|\{x\in\Omega:|f(x)|>t\}|.
$$
In the case $\Omega=\mathbb{R}^{n}$, we abbreviate it to $m(f,\ t)$.

Let $L^0({\mathbb R}^n)$ be the set of all measurable functions.
\begin{definition} For a Young function $\Phi$, the weak Orlicz space
$$
{\rm W}L^{\Phi}(\mathbb{R}^{n})=\{f\in L^{0}(\mathbb{R}^{n}):\Vert f\Vert_{{\rm W}L^{\Phi}}<\infty\}
$$
is defined by the quasi-norm
$$
\Vert f\Vert_{{\rm W}L^{\Phi}}=\sup_{\lambda>0} \| \lambda\, \chi_{_{(\lambda,\infty)}}(|f|)\|_{L^{\Phi}}.
$$
\end{definition}

For $\Omega\subset{\mathbb R}^n$, let
$$
\|f\|_{L^{\Phi}(\Omega)}:=\|f\chi_{\Omega}\|_{L^{\Phi}}
\quad\text{and}\quad
\|f\|_{{\rm W}L^{\Phi}(\Omega)}:=\|f\chi_{\Omega}\|_{{\rm W}L^{\Phi}}.
$$
A tacit understanding is that $f$ is defined to be zero outside $\Omega$.

We note that $\Vert f\Vert_{{\rm W}L^{\Phi}(\Omega)}\leq \Vert f\Vert_{L^{\Phi}(\Omega)}$, that
\begin{align*}
\|f\|_{{\rm W}L^\Phi}
&=\sup_{t>0}\Phi(t)m(\Omega,\ f,\ t)\\ 
&=\sup_{t>0}t\,m(\Omega,\ f,\ \Phi^{-1}(t))\\
&= \sup_{t>0}t\,m(\Omega,\ \Phi(|f|),\ t)
\end{align*}
and that
\begin{equation}\label{orlpr}
\int _{\Omega}\Phi\Big(\frac{|f(x)|}{\|f\|_{L^{\Phi}(\Omega)}}\Big)dx\leq 1,
\quad
\sup_{t>0}\Phi(t)m\Big(\Omega,\ \frac{f}{\|f\|_{{\rm W}L^{\Phi}(\Omega)}},\ t\Big)\leq 1
\end{equation}
according to \cite[Proposition 4.2]{KaNa18}.

The following analogue of the H\"older inequality is well known;
see 
\cite{Weiss50}
as well as
the paper \cite[\S II]{ONeil1965}
and the textbooks
\cite{KokKrbec,Sawano18}.
\begin{theorem}\label{HolderOr}
Let $\Omega\subset{\mathbb R}^n$ be a measurable set and,
let $f$ and $g$ be
measurable functions on $\Omega$. For a Young function $\Phi$ and its complementary function  $\widetilde{\Phi}$,
the following inequality is valid:
$$\int_{\Omega}|f(x)g(x)|dx \leq 2 \|f\|_{L^{\Phi}(\Omega)} \|g\|_{L^{\widetilde{\Phi}}(\Omega)}.$$
\end{theorem}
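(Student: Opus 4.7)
The plan is to prove the inequality via Young's inequality for complementary Young functions. The first observation is that the definition
\[
\widetilde{\Phi}(r)=\sup\{rs-\Phi(s): s\in[0,\infty)\}
\]
immediately yields the pointwise bound $ab \le \Phi(a)+\widetilde{\Phi}(b)$ for every $a,b\ge 0$, since $ab-\Phi(a)\le \widetilde{\Phi}(b)$ by taking $s=a$ in the supremum defining $\widetilde{\Phi}(b)$.

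Next I would dispose of trivial cases. If either of $\|f\|_{L^{\Phi}(\Omega)}$ or $\|g\|_{L^{\widetilde{\Phi}}(\Omega)}$ is zero, then the corresponding function is zero almost everywhere on $\Omega$, making the left-hand side equal to zero; if either norm is infinite while the other is positive, the inequality is vacuous. So it suffices to treat the case where both norms lie in $(0,\infty)$.

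In that case I would normalize by setting
\[
F(x)=\frac{|f(x)|}{\|f\|_{L^{\Phi}(\Omega)}}, \qquad G(x)=\frac{|g(x)|}{\|g\|_{L^{\widetilde{\Phi}}(\Omega)}}
\]
and apply Young's inequality pointwise on $\Omega$ to obtain
\[
F(x)G(x)\le \Phi(F(x))+\widetilde{\Phi}(G(x)).
\]
Integrating over $\Omega$ and invoking the two inequalities in \eqref{orlpr}, one for $f$ with respect to $\Phi$ and one for $g$ with respect to $\widetilde{\Phi}$ (using that $\widetilde{\widetilde{\Phi}}=\Phi$ so \eqref{orlpr} applies equally to $\widetilde{\Phi}$), yields
\[
\int_{\Omega} F(x)G(x)\,dx \le 1+1=2.
\]
Multiplying through by $\|f\|_{L^{\Phi}(\Omega)}\|g\|_{L^{\widetilde{\Phi}}(\Omega)}$ gives the claimed inequality with constant $2$.

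There is no significant obstacle here; the only subtle point is that \eqref{orlpr} as stated gives the bound $\le 1$ for each Orlicz integral when one normalizes by the Luxemburg--Nakano norm, which is exactly why the factor $2$ (rather than $1$) appears on the right-hand side. If one wanted the sharper constant $1$ one would need to replace the Luxemburg--Nakano norm by the equivalent Orlicz (Amemiya) norm, but that lies outside the scope of the present statement.
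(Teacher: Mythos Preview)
Your proof is correct and is exactly the standard argument. Note, however, that the paper does not supply its own proof of this theorem: it states the result as well known and cites \cite{Weiss50}, \cite[\S II]{ONeil1965}, and the textbooks \cite{KokKrbec,Sawano18}. Your argument via Young's inequality $ab\le\Phi(a)+\widetilde{\Phi}(b)$, normalization, and the modular bound \eqref{orlpr} is precisely the classical proof found in those references, so there is nothing to contrast.
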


By elementary calculations we have the following property:
\begin{lemma}\label{charorlc}
Let $\Phi$ be a Young function and let $B$ be a set in $\mathbb{R}^n$ with finite Lebesgue measure. Then
\begin{equation*}
\|\chi_{_B}\|_{L^{\Phi}} = \|\chi_{_B}\|_{{\rm W}L^{\Phi}}=\frac{1}{\Phi^{-1}\left(|B|^{-1}\right)}.
\end{equation*}
\end{lemma}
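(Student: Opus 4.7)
The plan is to compute the two norms directly from their definitions; the right-hand side $1/\Phi^{-1}(|B|^{-1})$ falls out essentially because $\chi_B$ takes only the values $0$ and $1$, so the Luxemburg integrand reduces to a single value of $\Phi$ and the distribution function is piecewise constant. First I would handle the Luxemburg norm, then deduce the weak norm from it using the generic inequality $\|f\|_{WL^{\Phi}}\le\|f\|_{L^{\Phi}}$ already noted in the excerpt.

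For the Luxemburg norm, observe that for any $\lambda>0$,
\[
\int_{\mathbb{R}^n}\Phi\!\left(\frac{\chi_B(x)}{\lambda}\right)dx=\Phi(1/\lambda)\,|B|,
\]
so $\lambda$ is admissible in the defining infimum of $\|\chi_B\|_{L^{\Phi}}$ exactly when $\Phi(1/\lambda)\le 1/|B|$. Using that $\Phi$ is increasing together with the O'Neil relation $\Phi^{-1}(s)=\inf\{r\ge 0:\Phi(r)>s\}$, one checks that $\Phi(1/\lambda)\le 1/|B|$ is equivalent to $1/\lambda\le\Phi^{-1}(1/|B|)$. The optimal choice $\lambda=1/\Phi^{-1}(|B|^{-1})$ is admissible because of the inequality $\Phi(\Phi^{-1}(r))\le r$ recalled from \cite{ONeil1965}, which gives $\|\chi_B\|_{L^{\Phi}}=1/\Phi^{-1}(|B|^{-1})$.

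For the weak norm, I would go back to the definition
\[
\|\chi_B\|_{WL^{\Phi}}=\sup_{\lambda>0}\bigl\|\lambda\,\chi_{(\lambda,\infty)}(\chi_B)\bigr\|_{L^{\Phi}}.
\]
Since $\chi_B$ takes values in $\{0,1\}$, the level set $\{\chi_B>\lambda\}$ equals $B$ when $0<\lambda<1$ and is empty when $\lambda\ge 1$. Hence the supremum reduces to $\sup_{0<\lambda<1}\lambda\,\|\chi_B\|_{L^{\Phi}}=\|\chi_B\|_{L^{\Phi}}$, which combined with the already noted bound $\|\chi_B\|_{WL^{\Phi}}\le\|\chi_B\|_{L^{\Phi}}$ yields the desired equality.

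The proof is largely routine; the only point requiring care is the translation between $\Phi(1/\lambda)\le 1/|B|$ and $1/\lambda\le\Phi^{-1}(1/|B|)$, because $\Phi$ need not be strictly monotone or finite everywhere, so the generalized inverse may have jumps or flat parts. That is handled by the O'Neil convention used in the excerpt, in particular the sandwich $\Phi(\Phi^{-1}(r))\le r\le\Phi^{-1}(\Phi(r))$, which ensures both that the candidate $\lambda=1/\Phi^{-1}(|B|^{-1})$ is admissible and that no smaller $\lambda$ can be.
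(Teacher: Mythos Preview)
Your argument is correct and is precisely the elementary calculation the paper alludes to without writing out; the paper gives no proof beyond the phrase ``by elementary calculations,'' and your computation of the Luxemburg integral together with the direct evaluation of the weak-norm supremum is the standard way to carry this out. One small remark: in the weak-norm step you do not actually need the generic inequality $\|f\|_{WL^{\Phi}}\le\|f\|_{L^{\Phi}}$, since your direct computation $\sup_{0<\lambda<1}\lambda\,\|\chi_B\|_{L^{\Phi}}=\|\chi_B\|_{L^{\Phi}}$ already gives equality on the nose.
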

By Theorem \ref{HolderOr}, Lemma \ref{charorlc} and \eqref{2.3} we get the following estimate:
\begin{lemma}\label{lemHold} 
For a Young function $\Phi$ and $B=B(x,r)$, the following inequality is valid:
$$\int_{B}|f(y)|dy \leq 2 |B| \Phi^{-1}\left(|B|^{-1}\right) \|f\|_{L^{\Phi}(B)}.$$
\end{lemma}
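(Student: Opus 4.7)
The plan is to combine the three preceding ingredients: the generalized Hölder inequality (Theorem~\ref{HolderOr}), the formula for the Orlicz norm of a characteristic function (Lemma~\ref{charorlc}), and the relation \eqref{2.3} between $\Phi^{-1}$ and $\widetilde{\Phi}^{-1}$. Since the integrand on the left is supported on $B$, I would first write
\[
\int_{B}|f(y)|\,dy=\int_{B}|f(y)\cdot\chi_{_B}(y)|\,dy
\]
so that Theorem~\ref{HolderOr}, applied on $\Omega=B$ with the pair $(\Phi,\widetilde{\Phi})$, yields
\[
\int_{B}|f(y)|\,dy\le 2\,\|f\|_{L^{\Phi}(B)}\,\|\chi_{_B}\|_{L^{\widetilde{\Phi}}(B)}.
\]

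Next I would evaluate the second factor using Lemma~\ref{charorlc} for the complementary Young function $\widetilde{\Phi}$ (which is itself a Young function, as noted after \eqref{2.3}). This gives
\[
\|\chi_{_B}\|_{L^{\widetilde{\Phi}}(B)}=\frac{1}{\widetilde{\Phi}^{-1}(|B|^{-1})}.
\]
Finally, I would substitute the estimate \eqref{2.3} in the form
\[
\frac{1}{\widetilde{\Phi}^{-1}(|B|^{-1})}\le |B|\,\Phi^{-1}(|B|^{-1}),
\]
which follows from $|B|^{-1}\le \Phi^{-1}(|B|^{-1})\,\widetilde{\Phi}^{-1}(|B|^{-1})$ by dividing through by $\widetilde{\Phi}^{-1}(|B|^{-1})\cdot|B|^{-1}$. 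Chaining these three inequalities produces exactly the claimed bound.

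There is really no obstacle here; the lemma is essentially a packaging of Hölder's inequality in the Orlicz setting against the constant function $1$ on $B$, with the norm of $\chi_{_B}$ computed explicitly and transferred from $\widetilde{\Phi}^{-1}$ to $\Phi^{-1}$ via \eqref{2.3}. The only small point to keep in mind is that $\widetilde{\Phi}^{-1}(|B|^{-1})$ is finite and nonzero for $|B|\in(0,\infty)$, which is guaranteed by the properties of $\Phi^{-1}$ and $\widetilde{\Phi}^{-1}$ recalled just before Remark~\ref{dfghj}.
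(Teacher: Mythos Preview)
Your proof is correct and follows exactly the route the paper indicates: apply Theorem~\ref{HolderOr} with $g=\chi_{_B}$, compute $\|\chi_{_B}\|_{L^{\widetilde{\Phi}}}$ via Lemma~\ref{charorlc}, and use the left inequality in \eqref{2.3} to pass from $\widetilde{\Phi}^{-1}$ to $\Phi^{-1}$. There is nothing to add.
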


We recall the boundedness property
of the Hardy-Littlewood maximal operator $M$
on Orlicz spaces since we use it later.
\begin{theorem}\label{Maxorl}
Let $\Phi$ be a Young function.
\begin{enumerate}
\item
{\rm\cite[Theorem 1]{Cianchi99}}
The operator $M$ is bounded from $L^{\Phi}({\mathbb R}^n)$ to  ${\rm W}L^{\Phi}({\mathbb R}^n)$, and the inequality
\begin{equation}\label{MbdninqW}
\|M f\|_{{\rm W}L^{\Phi}}\leq C_0\|f\|_{L^{\Phi}}
\end{equation}
holds with constant $C_0$ independent of $f$.
\item
{\rm\cite[Theorem 1]{Cianchi99}},
{\rm\cite[Corollary 3.3]{Hasto15}}
The operator $M$ is bounded on $L^{\Phi}({\mathbb R}^n)$, and the inequality
\begin{equation}\label{MbdninqS}
\|M f\|_{L^{\Phi}}\leq C_0\|f\|_{L^{\Phi}}
\end{equation}
holds with constant $C_0$ independent of $f$ if and only if $\Phi\in\nabla_2$.
\end{enumerate}
\end{theorem}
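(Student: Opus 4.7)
My plan is to reduce both assertions to the classical $L^p$-bounds for $M$ via a single pointwise inequality. Since $\Phi$ is convex with $\Phi(0)=0$, Jensen's inequality applied in each ball gives
\[ \Phi\!\left(\frac{1}{|B|}\int_B |f(y)|\,dy\right)\le \frac{1}{|B|}\int_B \Phi(|f(y)|)\,dy. \]
Taking the supremum over balls centered at $x$, together with monotonicity and left-continuity of $\Phi$, yields the pointwise inequality $\Phi(Mf(x))\le M(\Phi\circ|f|)(x)$, which is the backbone of the argument.

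For part~(1) I would normalize to $\|f\|_{L^\Phi}=1$, so that $\int\Phi(|f(x)|)\,dx\le 1$ by \eqref{orlpr}. Monotonicity of $\Phi$ converts the pointwise bound into the inclusion $\{Mf>\lambda\}\subset\{M(\Phi\circ|f|)>\Phi(\lambda)\}$, and the classical weak-$(1,1)$ inequality for $M$ applied to $\Phi\circ|f|\in L^1$ gives $|\{Mf>\lambda\}|\le C/\Phi(\lambda)$. Using Lemma~\ref{charorlc} and Remark~\ref{dfghj} with $\alpha=1/C$, this becomes
\[ \|\lambda\chi_{\{Mf>\lambda\}}\|_{L^\Phi}=\frac{\lambda}{\Phi^{-1}(1/|\{Mf>\lambda\}|)}\le \frac{\lambda}{\Phi^{-1}(\Phi(\lambda)/C)}\le C, \]
and taking the supremum in $\lambda$ yields \eqref{MbdninqW}.

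For part~(2), sufficiency ($\Phi\in\nabla_2\Rightarrow$ strong type) I would obtain by integrating the Calder\'on--Zygmund weak-$(1,1)$ estimate $|\{Mf>\lambda\}|\le C\lambda^{-1}\int_{\{|f|>\lambda/2\}}|f(x)|\,dx$ against $\Phi'(\lambda)$. A layer-cake representation together with Fubini gives
\[ \int \Phi(Mf(x))\,dx\lesssim \int |f(x)|\int_{0}^{2|f(x)|}\frac{\Phi'(\lambda)}{\lambda}\,d\lambda\,dx, \]
and the Hardy-type inequality $\int_0^T \Phi'(\lambda)/\lambda\,d\lambda\lesssim \Phi(T)/T$, which is well known to characterize $\nabla_2$, then yields $\int\Phi(Mf)\lesssim\int\Phi(|f|)$ and hence \eqref{MbdninqS}.

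The main obstacle is the necessity in part~(2). I would argue by contrapositive, testing against scaled indicators $f_R=\chi_{B(0,R)}$, for which $Mf_R(x)\asymp\min(1,R^n/|x|^n)$. Computing $\|f_R\|_{L^\Phi}$ via Lemma~\ref{charorlc} and $\|Mf_R\|_{L^\Phi}$ by a change of variables in the Luxemburg integral converts the hypothesized bound $\|Mf_R\|_{L^\Phi}\le C_0\|f_R\|_{L^\Phi}$ into an integral inequality for $\Phi$ equivalent to a quantitative $\nabla_2$ statement. Pinning down the sharp $\nabla_2$ constants from this step is the delicate part; I would follow Cianchi's test-function construction carefully here rather than attempt a soft Boyd-index argument.
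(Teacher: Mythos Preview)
The paper does not prove this theorem; it is quoted with citations to Cianchi and H\"ast\"o and used later only as a tool. So there is nothing in the paper to compare your argument against. Your outline is essentially the standard proof and is correct. Two small points worth tightening: in part~(1) a general Young function need not be strictly increasing, so the inclusion should read $\{Mf>\lambda\}\subset\{M(\Phi\circ|f|)\ge\Phi(\lambda)\}$, and one then uses that the weak-$(1,1)$ bound also controls $|\{Mg\ge t\}|$ (e.g.\ via lower semicontinuity of $Mg$); in part~(2) write the Lebesgue--Stieltjes measure $d\Phi$ in place of $\Phi'(\lambda)\,d\lambda$, since a general Young function need not be differentiable. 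For the necessity in part~(2), your indicator test in fact already closes the argument without a separate appeal to Cianchi's construction: after the change of variables $t=(R/|x|)^n$ the hypothesis $\|M\chi_{B(0,R)}\|_{L^\Phi}\le C_0\|\chi_{B(0,R)}\|_{L^\Phi}$ becomes $\int_0^{s/C_0}\Phi(u)u^{-2}\,du\lesssim \Phi(s)/s$ with $s=\Phi^{-1}(|B(0,R)|^{-1})$, and letting $R$ range over $(0,\infty)$ covers all $s>0$, which is exactly the integral characterization of $\nabla_2$.
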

See the textbooks \cite{KokKrbec,Maligranda89,RaRe91,Sawano18} for 
more about Orlicz spaces.

\section{Generalized fractional integrals on Orlicz spaces}\label{sec:Ir Orl}

The following theorem is one of our main results and gives necessary and sufficient conditions for the
boundedness of the operator $I_{\rho}$
from $L^{\Phi}({\mathbb R}^n)$ to ${\rm W}L^{\Psi}({\mathbb R}^n)$ and from $L^{\Phi}({\mathbb R}^n)$ to $L^{\Psi}({\mathbb R}^n)$.

\begin{theorem}  \label{AdGulGenRszOrlMorNec}
Let $\Phi,\Psi$ be Young functions.
\begin{enumerate}
\item
Let $\rho$ satisfy the conditions \eqref{int rho} and \eqref{sup rho}.
Then the condition
\begin{equation}\label{adRieszCharOrl1}
\Phi^{-1}(r^{-n}) \int_{0}^{r}\frac{\rho(t)}{t}dt + \int_{r}^{\infty}\rho(t) \, \Phi^{-1}(t^{-n}) \frac{dt}{t}  \le C \Psi^{-1}\big(r^{-n}\big)
\end{equation}
for all $r>0$, where $C>0$ does not depend on $r$, is sufficient for the boundedness of $I_{\rho}$ from $L^{\Phi}({\mathbb R}^n)$ to ${\rm W}L^{\Psi}({\mathbb R}^n)$.
Moreover, if $\Phi\in\nabla_2$, then the condition \eqref{adRieszCharOrl1} is also sufficient for the boundedness of $I_{\rho}$ from $L^{\Phi}({\mathbb R}^n)$ to $L^{\Psi}({\mathbb R}^n)$.

\item
The condition
\begin{equation}\label{adRieszCharOrl2}
\Phi^{-1}(r^{-n}) \int_{0}^{r}\frac{\rho(t)}{t}dt\le C \Psi^{-1}(r^{-n})
\end{equation}
for all $r>0$, where $C>0$ does not depend on $r$, is necessary for the boundedness of $I_{\rho}$ from $L^{\Phi}({\mathbb R}^n)$ to ${\rm W}L^{\Psi}({\mathbb R}^n)$ and from $L^{\Phi}({\mathbb R}^n)$ to $L^{\Psi}({\mathbb R}^n)$.

\item
Let $\rho$ satisfy the conditions \eqref{int rho} and \eqref{sup rho}.
Assume the condition
\begin{equation}\label{adRieszCharOrl3}
\int_{r}^{\infty}\rho(t) \, \Phi^{-1}(t^{-n}) \frac{dt}{t} \le C \Psi^{-1}\big(r^{-n}\big)
\end{equation}
holds for all $r>0$, where $C>0$ does not depend on $r$.
Then
condition \eqref{adRieszCharOrl2}
is necessary and sufficient for the boundedness of $I_{\rho}$ from $L^{\Phi}({\mathbb R}^n)$ to ${\rm W}L^{\Psi}({\mathbb R}^n)$.
Moreover, if $\Phi\in\nabla_2$, then the condition \eqref{adRieszCharOrl2} is necessary and sufficient for the boundedness of $I_{\rho}$ from $L^{\Phi}({\mathbb R}^n)$ to $L^{\Psi}({\mathbb R}^n)$.
\end{enumerate}
\end{theorem}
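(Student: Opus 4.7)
The plan is to reduce the three parts to one pointwise estimate (for Parts (1) and (3)) and one test-function computation (for Part (2)); Part (3) then becomes a formal equivalence between the three conditions. Concretely, I would first prove the pointwise bound
\[
|I_{\rho}f(x)|
\le C\,Mf(x)\int_{0}^{r}\frac{\rho(t)}{t}\,dt
+ C\,\|f\|_{L^{\Phi}}\int_{r}^{\infty}\rho(t)\,\Phi^{-1}(t^{-n})\,\frac{dt}{t}
\]
for every $x\in\mathbb{R}^n$ and every $r>0$. The near part $\int_{|x-y|<r}$ is handled by a dyadic decomposition into shells $\{2^{-k-1}r\le|x-y|<2^{-k}r\}$; on each shell $\rho(|x-y|)\le\sup_{t\in[2^{-k-1}r,2^{-k}r]}\rho(t)\le C\tilde\rho(2^{-k}r)$ by \eqref{sup rho}, so summation telescopes to $Mf(x)\int_{0}^{cr}\rho(t)/t\,dt$, using that the intervals $[k_{1}2^{-k}r,k_{2}2^{-k}r]$ overlap only $O(\log_{2}(k_{2}/k_{1}))$ times. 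The far part $\int_{|x-y|\ge r}$ is split into dyadic annuli; Lemma~\ref{lemHold} applied on $B(x,2^{k+1}r)$, combined again with \eqref{sup rho} and with the doubling of $\Phi^{-1}$ from Remark~\ref{dfghj} (which lets one absorb $\Phi^{-1}((2^{k+1}r)^{-n})$ inside the integral against $\rho(t)/t$), produces the second term.

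For Part (1), invoke \eqref{adRieszCharOrl1} to rewrite the pointwise bound as
\[
|I_{\rho}f(x)|\le C\,\Psi^{-1}(r^{-n})\biggl(\frac{Mf(x)}{\Phi^{-1}(r^{-n})}+\|f\|_{L^{\Phi}}\biggr),
\]
and then optimize by selecting $r^{-n}=\Phi(Mf(x)/\|f\|_{L^{\Phi}})$, so that the inequality $\Phi^{-1}(\Phi(s))\ge s$ from Remark~\ref{dfghj} gives
\[
|I_{\rho}f(x)|\le C\,\|f\|_{L^{\Phi}}\,\Psi^{-1}\!\Bigl(\Phi\bigl(Mf(x)/\|f\|_{L^{\Phi}}\bigr)\Bigr).
\]
After normalizing $\|f\|_{L^{\Phi}}=1$ and applying $\Psi$, this becomes $\Psi(|I_{\rho}f(x)|/C)\le\Phi(Mf(x))$; integrating and using the strong $L^{\Phi}$ bound of $M$ from Theorem~\ref{Maxorl} (available since $\Phi\in\nabla_{2}$) completes the $L^{\Psi}$ estimate, while passing instead to distribution functions and using the weak $L^{\Phi}$ bound of $M$ yields the $WL^{\Psi}$ estimate in general. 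For Part (2), I would test on $f=\chi_{B(0,r)}$: for $x\in B(0,r/2)$ the change to polar coordinates gives $I_{\rho}f(x)\ge c\int_{0}^{r/2}\rho(t)/t\,dt=:cA_{r}$, so $I_{\rho}f\ge cA_{r}\chi_{B(0,r/2)}$ pointwise. Lemma~\ref{charorlc} then yields $\|I_{\rho}f\|_{WL^{\Psi}}\ge cA_{r}/\Psi^{-1}(c'r^{-n})$ and $\|f\|_{L^{\Phi}}=1/\Phi^{-1}(c''r^{-n})$, and the assumed boundedness, together with the doubling of $\Phi^{-1}$ and $\Psi^{-1}$ to absorb the constants $c',c''$, produces exactly \eqref{adRieszCharOrl2}. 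Part (3) is then automatic: \eqref{adRieszCharOrl3} plus \eqref{adRieszCharOrl2} yields \eqref{adRieszCharOrl1} by summing the two inequalities, while \eqref{adRieszCharOrl1} trivially implies \eqref{adRieszCharOrl2}; so Part (1) supplies sufficiency and Part (2) supplies necessity.

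The main obstacle I anticipate is the weak-type conclusion of Part (1) when $\Phi\notin\nabla_{2}$. Without the strong $L^{\Phi}$ bound of $M$, the only tool is the weak estimate $\Phi(\mu)\,m(Mf,\mu)\lesssim\|f\|_{L^{\Phi}}$, extracted from (2.9) and Theorem~\ref{Maxorl}, and the transfer from the pointwise inequality involving $\Psi^{-1}\!\circ\!\Phi$ to a weak $L^{\Psi}$ quasi-norm on $I_{\rho}f$ must rely solely on the one-sided identities $\Phi(\Phi^{-1}(s))\le s\le\Phi^{-1}(\Phi(s))$ and their $\Psi$ analogues, which are the only relations available for a general Young function. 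Beyond that, keeping the multiplicative constants from the dyadic decompositions aligned with the doubling of $\Phi^{-1}$ is delicate but routine.
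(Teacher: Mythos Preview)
Your proposal is correct and matches the paper's approach: the Hedberg-type pointwise bound via dyadic annuli and \eqref{sup rho}, the optimization at $r^{-n}=\Phi\bigl(Mf(x)/(C_0\|f\|_{L^\Phi})\bigr)$, the transfer to (weak) $L^\Psi$ through $\Psi(\Psi^{-1}(s))\le s$ combined with Theorem~\ref{Maxorl}, and the test $f=\chi_{B(0,r)}$ together with Lemma~\ref{charorlc} for necessity are exactly what the paper does. The only point you gloss over is the degenerate case $\Phi\bigl(Mf(x)/(C_0\|f\|_{L^\Phi})\bigr)=0$, which the paper treats separately (observing that then $\Phi^{-1}(0)>0$, so letting $r\to\infty$ in \eqref{adRieszCharOrl1} forces $\int_0^\infty\rho(t)/t\,dt<\infty$ and hence $I_\rho f(x)\lesssim Mf(x)$), but this is a routine detail.
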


\begin{remark}\label{rem:LP}
We cannot replace $\int_0^r\frac{\rho(t)}{t}\,dt$ by $\rho(r)$ in \eqref{adRieszCharOrl1},
see
\cite[Section~5]{Nakai-Sumitomo2001SCMJ}.
\end{remark}

We need a couple of auxilary estimates.
The following lemma was proved in \cite[Lemma 2.1]{ErGunNSaw}:
\begin{lemma}\label{pwsgenfr}
There exist a constant $C>0$ such that for all $x\in B(0,r/2)$ and $r>0$,
$$
\int_{0}^{r/2}\frac{\rho(t)}{t}dt\le C I_{\rho}\chi_{B(0,r)}(x)
$$
holds.
\end{lemma}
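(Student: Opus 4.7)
The plan is straightforward: since $I_\rho$ has a non-negative kernel and we are integrating a non-negative function, it suffices to integrate over a well-chosen subset of $B(0,r)$ and evaluate the resulting radial integral explicitly.

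First, I would exploit the triangle inequality: for any $x\in B(0,r/2)$ and any $y\in B(x,r/2)$ one has
\[
 |y|\le |y-x|+|x|<\frac{r}{2}+\frac{r}{2}=r,
\]
so $B(x,r/2)\subset B(0,r)$. Since the kernel $\rho(|x-y|)/|x-y|^n$ is non-negative, this inclusion yields the pointwise lower bound
\[
 I_\rho\chi_{B(0,r)}(x)
 =\int_{B(0,r)}\frac{\rho(|x-y|)}{|x-y|^n}\,dy
 \ge \int_{B(x,r/2)}\frac{\rho(|x-y|)}{|x-y|^n}\,dy.
\]

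Next, I would perform the change of variables $z=y-x$, which is a translation and hence Jacobian $1$, to rewrite the right-hand side as $\int_{B(0,r/2)}\rho(|z|)|z|^{-n}\,dz$, an expression independent of $x$. Passing to polar coordinates $z=t\omega$ with $t\in(0,r/2)$ and $\omega\in S^{n-1}$, the factor $t^{n-1}$ from the Jacobian exactly cancels the $|z|^{-n}=t^{-n}$, leaving
\[
 \int_{B(0,r/2)}\frac{\rho(|z|)}{|z|^n}\,dz=\sigma(S^{n-1})\int_0^{r/2}\frac{\rho(t)}{t}\,dt.
\]
Combining the two displays gives the claimed inequality with $C=1/\sigma(S^{n-1})$. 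The assumption \eqref{int rho} ensures the right-hand integral is finite, so the bound is meaningful.

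There is essentially no obstacle here: the estimate does not require the doubling/decreasing hypotheses \eqref{rho doubl}, \eqref{rho/r^n}, nor the condition \eqref{sup rho}; it only uses positivity of $\rho$ and the ball inclusion $B(x,r/2)\subset B(0,r)$ for $x\in B(0,r/2)$. The slight care needed is to remember that we are producing a \emph{lower} bound on $I_\rho\chi_{B(0,r)}(x)$, so we must discard points of $B(0,r)$ rather than add extra mass, which is exactly what the inclusion allows.
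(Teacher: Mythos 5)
Your proof is correct: the ball inclusion $B(x,r/2)\subset B(0,r)$ for $x\in B(0,r/2)$, followed by translation and polar coordinates, gives exactly the stated lower bound with $C=1/\sigma(S^{n-1})$. The paper itself only cites \cite[Lemma 2.1]{ErGunNSaw} for this fact, and your argument is the standard one given there, so nothing further is needed.
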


\begin{proposition}\label{prop:180314-1}
Let $\rho$ satisfy \eqref{sup rho}.
Define
\begin{equation}\label{eq:tilde rho}
\tilde{\rho}(r)=\int_{k_1 r}^{k_2 r} \rho(s) \frac{ds}{s}
\quad (r>0).
\end{equation}
Let $\tau:(0,\infty) \to (0,\infty)$ be a doubling function
in the sense that $\tau(r) \sim \tau(s)$ if $0<s \le r \le 2s$.
Then, for each $r>0$,
\begin{align}
\label{eq:180314-9}
  \sum_{j=-\infty}^{-1}\tilde{\rho}(2^j r) &\lesssim
 \int_{0}^{k_2 r}\frac{\rho(s)}{s}ds,\\
\label{eq:180314-10}
 \sum_{j=0}^{\infty}\tilde{\rho}(2^j r)\tau\big((2^{j}r)^{-n}\big)
&\lesssim \int_{k_1 r}^{\infty} \frac{\rho(s)}{s}\tau\big(s^{-n}\big) ds.
\end{align}
\end{proposition}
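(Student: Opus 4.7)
The plan is to expand both sums by writing $\tilde\rho(2^{j}r)$ explicitly as an integral, and then to reassemble those integrals into a single integral over a suitable interval. The key combinatorial fact is that the dyadic family of intervals $\{[k_1 2^{j}r,\,k_2 2^{j}r]\}_{j\in\Z}$ has bounded overlap: a point $s>0$ lies in $[k_1 2^{j}r, k_2 2^{j}r]$ precisely when $2^{j}\in[s/(k_2 r),\,s/(k_1 r)]$, so the number of $j$'s for which this holds is at most $N:=\lfloor\log_2(k_2/k_1)\rfloor+1$, a constant depending only on $k_1,k_2$.

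For \eqref{eq:180314-9} I would simply compute
\[
\sum_{j=-\infty}^{-1}\tilde\rho(2^{j}r)=\sum_{j=-\infty}^{-1}\int_{k_1 2^{j}r}^{k_2 2^{j}r}\frac{\rho(s)}{s}\,ds
=\int_{0}^{\infty}\frac{\rho(s)}{s}\Bigl(\sum_{j=-\infty}^{-1}\chi_{[k_1 2^{j}r,\,k_2 2^{j}r]}(s)\Bigr)ds.
\]
Since every $s$ appearing in the sum must satisfy $s\le k_2 2^{-1}r\le k_2 r$, and the indicator sum is pointwise bounded by $N$, the right-hand side is dominated by $N\int_{0}^{k_2 r}\rho(s)/s\,ds$.

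For \eqref{eq:180314-10} the same packing argument applies, but I first need to move $\tau((2^{j}r)^{-n})$ inside the integral. For $s\in[k_1 2^{j}r,\,k_2 2^{j}r]$ the ratio $s^{-n}/(2^{j}r)^{-n}=(2^{j}r/s)^{n}$ lies in the fixed compact interval $[k_2^{-n},\,k_1^{-n}]$; iterating the doubling hypothesis $\tau(r)\sim\tau(s)$ ($s\le r\le 2s$) a bounded number of times gives $\tau((2^{j}r)^{-n})\sim\tau(s^{-n})$ with constants independent of $j$ and $r$. Therefore
\[
\tilde\rho(2^{j}r)\tau\bigl((2^{j}r)^{-n}\bigr)\lesssim \int_{k_1 2^{j}r}^{k_2 2^{j}r}\frac{\rho(s)}{s}\tau(s^{-n})\,ds,
\]
and summing over $j\ge 0$, invoking the same bounded-overlap argument together with the observation that the union of intervals is contained in $[k_1 r,\infty)$, yields \eqref{eq:180314-10}.

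The only mildly delicate point is the second step: one must verify that the doubling assumption on $\tau$, stated only for $s\le r\le 2s$, self-improves to comparability on any fixed compact ratio interval. This is standard (iterate doubling $\lceil\log_2(k_1^{-n})\rceil$ or $\lceil\log_2 k_2^{n}\rceil$ times) and the whole proof reduces to this observation plus the packing lemma for the dyadic intervals $[k_1 2^{j}r,\,k_2 2^{j}r]$.
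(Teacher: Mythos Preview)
Your proof is correct and follows essentially the same approach as the paper: expand each $\tilde\rho(2^{j}r)$ as an integral, use the bounded overlap of the dyadic intervals $[k_1 2^{j}r,\,k_2 2^{j}r]$, and for \eqref{eq:180314-10} invoke the doubling of $\tau$ to replace $\tau((2^{j}r)^{-n})$ by $\tau(s^{-n})$ inside the integral. Your explicit overlap bound $N=\lfloor\log_2(k_2/k_1)\rfloor+1$ and the remark on iterating doubling over the fixed ratio interval $[k_2^{-n},k_1^{-n}]$ are details the paper leaves implicit.
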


\begin{proof}
We invoke the overlapping property in \cite{SawSugTan} and by Remark \ref{dfghj} we have
\begin{align*}
  \sum_{j=-\infty}^{-1}\tilde{\rho}(2^j r)  & = \sum_{j=-\infty}^{-1}\int_{2^{j}k_1 r}^{2^{j}k_2 r} \rho(s) \frac{ds}{s}\\
&\le \int_{0}^{k_2 r} \left(\sum_{j=-\infty}^{-1}\chi_{[2^{j}k_1r,~2^{j}k_2 r]}(s)\right) \frac{\rho(s)}{s}ds\\
&\lesssim \int_{0}^{k_2 r}\frac{\rho(s)}{s}ds
\end{align*}
and
\begin{align*}
  \sum_{j=0}^{\infty}\tilde{\rho}(2^j r)\tau\big((2^{j}r)^{-n}\big) &= \int_{k_1 r}^{\infty} \left(\sum_{j=0}^{\infty}\chi_{[2^{j}k_1r,~2^{j}k_2 r]}(s)\frac{\rho(s)}{s}\tau\big((2^{j}r)^{-n}\big)\right) ds\\
  & \lesssim \int_{k_1 r}^{\infty}\left(\sum_{j=0}^{\infty}\chi_{[2^{j}k_1r,~2^{j}k_2 r]}(s)\right)\frac{\rho(s)}{s}\tau\big(s^{-n}\big) ds\\
  & \lesssim \int_{k_1 r}^{\infty} \frac{\rho(s)}{s}\tau\big(s^{-n}\big) ds.
\end{align*}
\end{proof}

To prove Theorem \ref{AdGulGenRszOrlMorNec},
we need the following estimate of Hedberg-type \cite{Hedberg}:
\begin{proposition}\label{propoverlap}
Under the assumption of Theorem \ref{AdGulGenRszOrlMorNec},
for any positive constant $C_0$, there exists a positive constant $C_1$ such that,
for all nonnegative functions $f\in L^{\Phi}({\mathbb R}^n)$ with $f\ne0$,
\begin{equation}\label{pwerfdsc}
I_{\rho} f(x)  \leq
C_1
\|f\|_{L^{\Phi}} \Psi^{-1}\circ\Phi \Big(\frac{Mf(x)}{C_0\|f\|_{L^{\Phi}}}\Big)
\qquad (x\in{\mathbb R}^n).
\end{equation}
\end{proposition}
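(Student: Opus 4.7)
The plan is the classical Hedberg splitting argument, adapted to the Orlicz setting. Fix $x \in \mathbb{R}^n$ and a nonnegative $f \in L^{\Phi}(\mathbb{R}^n)$ with $\|f\|_{L^{\Phi}} > 0$. For a threshold $r > 0$ to be chosen below, I decompose
$$
I_{\rho} f(x) = \sum_{j \in \mathbb{Z}} \int_{A_j} \frac{\rho(|x-y|)}{|x-y|^n}\, |f(y)|\,dy, \quad A_j := \{y : 2^j r \le |x-y| < 2^{j+1} r\},
$$
and split at $j = 0$ into a \emph{near} part ($j < 0$) and a \emph{far} part ($j \ge 0$).

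On each annulus $A_j$, condition \eqref{sup rho} gives $\rho(|x-y|) \le C\,\tilde\rho(2^{j+1} r)$, while $|x-y|^n \ge (2^j r)^n$. For $j < 0$ I bound the local integral of $|f|$ by $|B(x,2^{j+1}r)|\, Mf(x)$; then summing and invoking Proposition \ref{prop:180314-1}, estimate \eqref{eq:180314-9}, produces
$$
\sum_{j<0}\int_{A_j}\frac{\rho(|x-y|)}{|x-y|^n}|f(y)|\,dy \lesssim Mf(x)\int_0^{k_2 r}\frac{\rho(s)}{s}\,ds.
$$
For $j \ge 0$, Lemma \ref{lemHold} gives $\int_{B(x,2^{j+1}r)} |f|\,dy \lesssim (2^{j+1}r)^n\, \Phi^{-1}((2^{j+1}r)^{-n})\,\|f\|_{L^{\Phi}}$; since $\Phi^{-1}$ is doubling by Remark \ref{dfghj}, estimate \eqref{eq:180314-10} applied with $\tau = \Phi^{-1}$ yields
$$
\sum_{j\ge 0}\int_{A_j}\frac{\rho(|x-y|)}{|x-y|^n}|f(y)|\,dy \lesssim \|f\|_{L^{\Phi}}\int_{k_1 r}^{\infty}\rho(s)\,\Phi^{-1}(s^{-n})\,\frac{ds}{s}.
$$

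Now I choose $r > 0$ so that $r^{-n} = \Phi(Mf(x)/(C_0\|f\|_{L^{\Phi}}))$; degenerate cases in which this value lies outside $(0,\infty)$ are handled by limits or by the observation that one side of \eqref{pwerfdsc} vanishes or is $+\infty$. From O'Neil's inequality $\Phi^{-1}(\Phi(t)) \ge t$ I obtain $Mf(x) \le C_0\|f\|_{L^{\Phi}}\,\Phi^{-1}(r^{-n})$, so the near estimate becomes $\lesssim C_0\|f\|_{L^{\Phi}}\,\Phi^{-1}(r^{-n})\int_0^{k_2 r}\rho(s)/s\,ds$. Using Remark \ref{dfghj} to replace $\Phi^{-1}(r^{-n})$ by a constant multiple of $\Phi^{-1}((k_2 r)^{-n})$, the hypothesis \eqref{adRieszCharOrl1} at radius $k_2 r$ bounds this by a constant multiple of $\|f\|_{L^{\Phi}}\,\Psi^{-1}(r^{-n})$; a parallel argument applying \eqref{adRieszCharOrl1} at radius $k_1 r$ controls the far estimate by the same quantity, the factor $\Psi^{-1}((k_1 r)^{-n}) \le \max(k_1^{-n},1)\,\Psi^{-1}(r^{-n})$ again coming from Remark \ref{dfghj}. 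Since $\Psi^{-1}(r^{-n}) = \Psi^{-1}\circ\Phi(Mf(x)/(C_0\|f\|_{L^{\Phi}}))$, the desired bound \eqref{pwerfdsc} follows with $C_1$ depending only on $C_0$, $k_1$, $k_2$, and the constant of \eqref{adRieszCharOrl1}. The chief technical obstacle is precisely this mismatch between the dyadic limits $0$-to-$k_2 r$ and $k_1 r$-to-$\infty$ forced by Proposition \ref{prop:180314-1} and the clean $0$-to-$r$ and $r$-to-$\infty$ limits in \eqref{adRieszCharOrl1}; it is resolved by applying \eqref{adRieszCharOrl1} at two rescaled radii and absorbing the $k_1, k_2$-dependent factors via the quasi-homogeneity of $\Phi^{-1}$ and $\Psi^{-1}$ recorded in Remark \ref{dfghj}.
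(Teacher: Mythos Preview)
Your proposal is correct and follows essentially the same Hedberg-type argument as the paper: the dyadic splitting at radius $r$ determined by $r^{-n}=\Phi\bigl(Mf(x)/(C_0\|f\|_{L^\Phi})\bigr)$, control of the near part by $Mf(x)$ via \eqref{eq:180314-9}, control of the far part by $\|f\|_{L^\Phi}$ via Lemma~\ref{lemHold} and \eqref{eq:180314-10}, and the final appeal to \eqref{adRieszCharOrl1} together with the doubling of $\Phi^{-1},\Psi^{-1}$ to absorb the $k_1,k_2$ mismatch. The only notable difference is that the paper treats the degenerate case $\Phi\bigl(Mf(x)/(C_0\|f\|_{L^\Phi})\bigr)=0$ explicitly (deducing $\Phi^{-1}(0)>0$ and using the limiting form of \eqref{adRieszCharOrl1} at $r=\infty$), whereas you dismiss it in one sentence; your sketch is fine but would benefit from spelling this out.
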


\begin{proof}
The idea of the proof comes from \cite{ErGunNSaw}. 
First note that
\[
0<\Phi^{-1}(0)\int_0^\infty \frac{\rho(t)}{t}\,dt \lesssim \Psi^{-1}(0)
\]
as long as $\Phi^{-1}(0)>0$.

Let $x\in{\mathbb R}^n$.
Keeping in mind that $M f(x)>0$,
we may assume 
\[
0<\frac{Mf(x)}{C_0\|f\|_{L^{\Phi}}}<\infty, \quad
0 \le \Phi\left(\frac{Mf(x)}{C_0\|f\|_{L^{\Phi}}}\right)<\infty;
\]
otherwise there is nothing to prove.
If
\[
\Phi\left(\frac{Mf(x)}{C_0\|f\|_{L^{\Phi}}}\right)=0,
\]
then
\[
\frac{Mf(x)}{C_0\|f\|_{L^{\Phi}}}
\le
\sup\{u \ge 0\,: \,\Phi(u)=0\} =\Phi^{-1}(0)
\]  
and hence
\begin{align*}
I_\rho f(x)
&\le C
\sum_{j=-\infty}^\infty
\frac{\tilde{\rho}(2^j)}{2^{j n}}
\int_{|x-y|<2^j}|f(y)|\,dy\\
&\le C
\left(\int_0^\infty \frac{\rho(s)}{s}\,ds\right)M f(x)\\
&\le C
\frac{\Psi^{-1}(0)}{\Phi^{-1}(0)}M f(x)\\
&\le C
\frac{1}{\Phi^{-1}(0)}
\Psi^{-1}\left(\Phi\left(\frac{Mf(x)}{C_0\|f\|_{L^{\Phi}}}\right)\right)M f(x)\\
&\le C
\Psi^{-1}\left(\Phi\left(\frac{Mf(x)}{C_0\|f\|_{L^{\Phi}}}\right)\right)
\|f\|_{L^\Phi}.
\end{align*}
So, this case the result is valid.

If
\[
\Phi\left(\frac{Mf(x)}{C_0\|f\|_{L^{\Phi}}}\right)>0,
\] 
choose $r \in (0,\infty)$ so that
\[
r^{-n}=\Phi\left(\frac{Mf(x)}{C_0\|f\|_{L^{\Phi}}}\right).
\]
We have
\begin{equation*}
I_{\rho} f(x)\le
C\left[\sum_{j=-\infty}^{-1}+\sum_{j=0}^{\infty}\frac{\tilde{\rho}(2^j r)}{(2^j r)^n}\int_{|x-y|<2^{j}r}f(y) dy\right]
=C({\rm I}+{\rm II})
\end{equation*}
for given $x\in {\mathbb R}^n$ and $r>0$.

Then
from Proposition \ref{prop:180314-1}
\begin{align*}
{\rm I}&\le C \sum_{j=-\infty}^{-1}\tilde{\rho}(2^j r) Mf(x)\le C \left(\int_{0}^{k_2 r}\frac{\rho(s)}{s}ds\right) Mf(x)\\
{\rm II} &\le C \sum_{j=0}^{\infty}\tilde{\rho}(2^j r)\Phi^{-1}\big((2^{j}r)^{-n}\big)\|f\|_{L^{\Phi}(B(x,2^{j}r))}\\
&\le C \|f\|_{L^{\Phi}} \int_{k_1 r}^{\infty} \Phi^{-1}\big(s^{-n}\big) \frac{\rho(s)}{s}ds.
\end{align*}
Consequently, we have
\begin{equation*}
\begin{split}
I_\rho f(x) \lesssim \left(\int_{0}^{k_2 r}\frac{\rho(s)}{s}ds\right) Mf(x)+\|f\|_{L^{\Phi}}\int_{k_1 r}^{\infty} \Phi^{-1}\big(s^{-n}\big) \frac{\rho(s)}{s}ds.
\end{split}
\end{equation*}
Thus, by the doubling property of $\Phi^{-1}$ and $\Psi^{-1}$,
\eqref{adRieszCharOrl1} and Remark \ref{dfghj} we obtain
\begin{align*}
I_{\rho} f(x)
&\lesssim
Mf(x)\frac{\Psi^{-1}((k_2 r)^{-n})}{\Phi^{-1}((k_2 r)^{-n})}
+
\|f\|_{L^{\Phi}} \, \Psi^{-1}((k_1 r)^{-n})\\
&\lesssim
Mf(x)\frac{\Psi^{-1}(r^{-n})}{\Phi^{-1}(r^{-n})}
+
\|f\|_{L^{\Phi}} \, \Psi^{-1}(r^{-n}).\\
\end{align*}
Recall that
$\Phi^{-1}(\Phi(r)) = r$
if  $0 < \Phi(r) < \infty$.
Thus
$\Phi^{-1}(r^{-n})=\dfrac{Mf(x)}{C_0\|f\|_{L^{\Phi}}}$ and
\begin{equation*}
 I_{\rho} f(x)
 \lesssim
 \|f\|_{L^{\Phi}} \, \Psi^{-1}(r^{-n})
 =
 \|f\|_{L^{\Phi}} \, \Psi^{-1}\left(\Phi\left(\frac{Mf(x)}{C_0\|f\|_{L^{\Phi}}}\right)\right).
\end{equation*}
Therefore, we get
(\ref{pwerfdsc}).
\end{proof}

Now we move on to the proof of Theorem~\ref{AdGulGenRszOrlMorNec}.
The third statement is a consequence
of the remaining statements.
So we concentrate on the first and the second ones.
\begin{itemize}
\item
Let $C_0$ be as in \eqref{MbdninqW}.
Let $f$ be a non-negative measurable function.
Then by \eqref{MbdninqW} and \eqref{pwerfdsc},
\begin{align*}
&\sup_{r>0}\Psi(r)\, m\Big(\frac{I_{\rho} f(x)}{C_1\|f\|_{L^{\Phi}}},r\Big)=\sup_{r>0}r\, m\Big(\Psi\Big(\frac{I_{\rho} f(x)}{C_1\|f\|_{L^{\Phi}}}\Big),r\Big)
\\
\leq &\sup_{r>0}r\, m\Big(\Phi\Big(\frac{M f(x)}{C_0\|f\|_{L^{\Phi}}}\Big),r\Big)\leq\sup_{r>0}\Phi(r)\, m\Big(\frac{M f(x)}{\|Mf\|_{{\rm W}L^{\Phi}}},r\Big)\leq 1,
\end{align*}
i.e.
$$
\|I_{\rho}f\|_{{\rm W}L^{\Psi}}\lesssim \|f\|_{L^{\Phi}}.
$$
\item
Assume in addition that $\Phi\in\nabla_2$,
so that we have \eqref{MbdninqS}. By \eqref{MbdninqS}, we have
\begin{align*}
\int_{{\mathbb R}^n}\Psi\left(\frac{I_{\rho} f(x)}{C_1\|f\|_{L^{\Phi}}}\right)dx&\leq \int_{{\mathbb R}^n}\Phi\left(\frac{M f(x)}{C_0\|f\|_{L^{\Phi}}}\right)dx\\
&\leq \int_{{\mathbb R}^n}\Phi\left(\frac{M f(x)}{\|Mf\|_{L^{\Phi}}}\right)dx\leq 1,
\end{align*}
i.e.
$$
\|I_{\rho}f\|_{L^{\Psi}}\lesssim \|f\|_{L^{\Phi}}.
$$
\item
We can and do concentrate on the boundedness
 of $I_{\rho}$ from $L^{\Phi}({\mathbb R}^n)$ to ${\rm W}L^{\Psi}({\mathbb R}^n)$,
since the  boundedness
 of $I_{\rho}$ from $L^{\Phi}({\mathbb R}^n)$ to $L^{\Psi}({\mathbb R}^n)$
is stronger than the boundedness
 of $I_{\rho}$ from $L^{\Phi}({\mathbb R}^n)$ to ${\rm W}L^{\Psi}({\mathbb R}^n)$.
With this in mind,
assume that $I_{\rho}$ is bounded from $L^{\Phi}({\mathbb R}^n)$ to ${\rm W}L^{\Psi}({\mathbb R}^n)$.

Then we have by Lemma~\ref{pwsgenfr}
\begin{equation*}
 \int_{0}^{r/2}\frac{\rho(s)}{s}ds \, \|\chi_{B(0,r/2)}\|_{{\rm W}L^{\Psi}(B(0,r/2))}
 \lesssim
 \|I_{\rho} \chi_{B(0,r)}\|_{{\rm W}L^{\Psi}(B(0,r/2))}.
\end{equation*}
Therefore, by the doubling property of $\Phi^{-1}$ and Lemma~\ref{charorlc}, we have
\begin{align*}
\int_{0}^{r/2}\frac{\rho(s)}{s}ds
&\lesssim \Psi^{-1}\left(r^{-n}\right)\|I_{\rho} \chi_{B(0,r)}\|_{{\rm W}L^{\Psi}(B(0,r/2))} \\
&\lesssim \Psi^{-1}\left(r^{-n}\right)\|I_{\rho} \chi_{B(0,r)}\|_{{\rm W}L^{\Psi}}
\\
&\lesssim \Psi^{-1}\left(r^{-n}\right)\|\chi_{B(0,r)}\|_{L^{\Phi}}\\
&\lesssim
\frac{\Psi^{-1}\left(r^{-n}\right)}{\Phi^{-1}\left(r^{-n}\right)}.
\end{align*}
\end{itemize}

\begin{remark}
In \cite[Corollary 3.2]{Nakai2001Taiwan}
the third author found the sufficient conditions which ensures
the boundedness of the operator $I_{\rho}$ from $L^{\Phi}({\mathbb R}^n)$ to $L^{\Psi}({\mathbb R}^n)$, including its weak version.
Theorem \ref{AdGulGenRszOrlMorNec} improves
the third author's result
in that Theorem \ref{AdGulGenRszOrlMorNec} also covers the necessity
by imposing a weaker condition on $\rho$.
\end{remark}

\begin{remark}
In the case $\Phi(t)=t^p$,
Theorem \ref{AdGulGenRszOrlMorNec} was proved in \cite[Corollary 1.5]{ErGunNSaw}.
\end{remark}

\begin{example}\label{exmp:Ir}
Let $\rho$ be as in \eqref{rho exmp} and
\begin{equation*}
 \Phi(t)=
 \begin{cases}
 t^{3/2}, & 0\le t\le1, \\
 t(\log(et))^{1/2}, & t>1,
 \end{cases}
 \quad
 \Psi(t)=
 \begin{cases}
 \dfrac{2e}{3}t^{3/2}, & 0\le t\le1, \\[1ex]
 \dfrac{2e}{3}\dfrac{\exp\exp(t)}{\exp\exp(1)}, & t>1.
 \end{cases}
\end{equation*}
Then the pair $(\rho,\Phi,\Psi)$ satisfies \eqref{adRieszCharOrl1}. 
In fact,
we have
\begin{equation*}
 \Phi^{-1}(u)\sim
 \begin{cases}
 u^{2/3}, & 0\le u\le1, \\
 u(\log(eu))^{-1/2}, & u>1,
 \end{cases}
\end{equation*}
\begin{equation*}
 \Psi^{-1}(u)\sim
 \begin{cases}
 u^{2/3}, & 0\le u\le1, \\
 \log(\log(e^e u)), & u>1,
 \end{cases}
\end{equation*}
and,
for all $r>0$,
\begin{gather*}
 \int_0^r\frac{\rho(t)}{t}\,dt\;{\Phi}^{-1}(1/r^n) 
 \lesssim
\min(1,r^{-2n/3}),
\\
 \int_r^{\infty}\frac{\rho(t)\,\Phi^{-1}(1/t^n)}{t}\,dt
 \lesssim
\min( \log\log(e^e/r),r^{-2n/3}),
\\
 \Psi^{-1}(1/r^n)
 \sim
\min( \log\log(e^e/r),r^{-2n/3}).
\end{gather*}
See \cite{Nakai2001SCMJ} for
other examples.
\end{example}

\section{Generalized fractional maximal operators on Orlicz spaces}\label{sec:Mr Orl}

We recall that,
for a function $\rho : (0,\infty) \to (0,\infty)$,
$M_{\rho}$ is defined by \eqref{Mr}.
Here we suppose that
$\rho$ is an increasing function such that
$r \in (0,\infty) \mapsto r^{-n}\rho(r) \in (0,\infty)$
is decreasing.

Under this assumption, we have the following localized estimate:
\begin{lemma}\label{swkshr}
There exists a positive constant $C$ such that,
for all balls $B=B(x,r)$
and all measurable functions $f$ supported on $B$,
\begin{equation}\label{eq:180314-2}
M_{\rho} f(x)\le C \rho(r) M f(x).
\end{equation}
\end{lemma}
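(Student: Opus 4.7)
The plan is to split the supremum defining $M_\rho f(x)$ according to whether the radius $s$ over which we average is smaller than $r$ (the radius of the ball containing the support of $f$) or larger than $r$, and to handle each case by using one of the two monotonicity assumptions on $\rho$.

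First I would fix $s>0$ and examine the quantity
\[
A_s := \frac{\rho(s)}{|B(x,s)|}\int_{B(x,s)}|f(y)|\,dy.
\]
For $0<s\le r$, I would simply use that $\rho$ is increasing so that $\rho(s)\le\rho(r)$, obtaining
\[
A_s \le \rho(r)\cdot\frac{1}{|B(x,s)|}\int_{B(x,s)}|f(y)|\,dy \le \rho(r)\,Mf(x).
\]
For $s>r$, I would use that $f$ is supported in $B(x,r)\subset B(x,s)$, so the integral over $B(x,s)$ equals the integral over $B(x,r)$. Then, using that $t\mapsto t^{-n}\rho(t)$ is decreasing, the prefactor $\rho(s)/|B(x,s)|$ is bounded by $\rho(r)/|B(x,r)|$, and one obtains
\[
A_s = \frac{\rho(s)}{|B(x,s)|}\int_{B(x,r)}|f(y)|\,dy \le \frac{\rho(r)}{|B(x,r)|}\int_{B(x,r)}|f(y)|\,dy \le \rho(r)\,Mf(x).
\]

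Taking the supremum over $s>0$ then yields \eqref{eq:180314-2} with constant $C=1$. There is no real obstacle here: the statement is essentially a direct dichotomy exploiting the two monotonicity hypotheses, one handling small radii and the other handling large radii, and the two bounds fit together precisely because $\rho$ increases while $\rho(t)/t^n$ decreases, so that $\rho(r)$ is the natural common upper bound at the crossover scale $s=r$.
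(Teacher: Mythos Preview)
Your proof is correct and follows the same dichotomy strategy as the paper's proof: split the supremum according to the radius and apply the two monotonicity hypotheses on the respective ranges. The only difference is that the paper splits at $3r$ rather than at $r$, which then requires invoking the doubling property of $\rho$ (derived from the two monotonicity assumptions) and yields an unspecified constant; your split at $r$ is cleaner and gives $C=1$.
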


\begin{proof}
Let $B(R)=B(x,R)$ with $x=0$ for $R>0$.
By the definition of $M_{\rho}$, we have
\begin{align*}
\lefteqn{
M_{\rho}f(x)
}\\
&=
\max\left\{
\sup\limits_{0<R<3r} \frac{\rho(R)}{|B(R)|} \int_{B(x,R)} |f(y)| dy,
\sup\limits_{R \ge 3r} \frac{\rho(R)}{|B(R)|} \int_{B(x,R)} |f(y)| dy
\right\}.
\end{align*}
For the first term,
we use the fact that $\rho$ is increasing and doubling to have
\begin{align*}
\sup\limits_{0<R<3r} \frac{\rho(R)}{|B(R)|} \int_{B(x,R)} |f(y)| dy
&\lesssim
\sup\limits_{0<R<3r} \frac{\rho(r)}{|B(x,R)|} \int_{B(x,R)} |f(y)| dy\\
&\le
\rho(r)M f(x).
\end{align*}
For the second term,
since $r \in (0,\infty) \mapsto r^{-n}\rho(r) \in (0,\infty)$ is decreasing and $f$ is supported on $B(x,r)$
\begin{align*}
\sup\limits_{R \ge 3r} \frac{\rho(R)}{|B(R)|} \int_{B(x,R)} |f(y)| dy
&\le
\sup\limits_{R \ge 3r} \frac{\rho(3r)}{|B(x,3r)|} \int_{B(x,R)} |f(y)| dy\\
&\le \rho(r)M f(x).
\end{align*}
Thus, combining these estimates, we obtain the desired result.
\end{proof}



The Hedberg inequality for $M_\rho$ and $L^\Phi$ can be stated as follows:
\begin{lemma}\label{lem:180314-5}
Let $\Phi, \Psi$ be Young functions.
Assume that there exists a positive constant $C$ such that, for all $r>0$,
\begin{equation}\label{adRieszCharOrl2GFM}
\rho(r)\le C \frac{\Psi^{-1}\big(r^{-n}\big)}{\Phi^{-1}\big(r^{-n}\big)}.
\end{equation}
Then, for any positive constant $C_0$, there exists a positive constant $C_1$ such that,
for all $f\in L^{\Phi}({\mathbb R}^n)$ with $f\ne0$,
\begin{equation}\label{eq:180314-7}
M_{\rho} f(x)
\le C_1
\|f\|_{L^{\Phi}} (\Psi^{-1}\circ\Phi) \Big(\frac{Mf(x)}{C_0\|f\|_{L^{\Phi}}}\Big) \quad (x \in {\mathbb R}^n).
\end{equation}
\end{lemma}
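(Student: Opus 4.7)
The plan is to mimic the standard Hedberg splitting argument, which appeared in disguise already in Proposition~\ref{propoverlap}, but now adapted to the averaged structure defining $M_\rho$. Rather than doing a dyadic decomposition as in the proof of Proposition~\ref{propoverlap}, I would split the sup in \eqref{Mr} at a single cutoff $r>0$, to be chosen at the end.

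First, for a ball $B(x,s)$ with $s\le r$, the monotonicity of $\rho$ gives
\[
\frac{\rho(s)}{|B(x,s)|}\int_{B(x,s)}|f(y)|\,dy
\le \rho(r)\,Mf(x).
\]
For $s>r$, I would use Lemma~\ref{lemHold} to dominate the average by $2\rho(s)\Phi^{-1}(s^{-n})\|f\|_{L^\Phi}$, then invoke the hypothesis \eqref{adRieszCharOrl2GFM} to conclude
\[
\frac{\rho(s)}{|B(x,s)|}\int_{B(x,s)}|f(y)|\,dy
\lesssim \Psi^{-1}(s^{-n})\|f\|_{L^\Phi}
\le \Psi^{-1}(r^{-n})\|f\|_{L^\Phi},
\]
where the last inequality uses that $\Psi^{-1}(s^{-n})$ is decreasing in $s$. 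Combining these two estimates and applying \eqref{adRieszCharOrl2GFM} once more to the first piece yields
\[
M_\rho f(x)\lesssim \frac{\Psi^{-1}(r^{-n})}{\Phi^{-1}(r^{-n})}Mf(x)+\Psi^{-1}(r^{-n})\|f\|_{L^\Phi}.
\]

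Finally, to balance the two terms I would choose $r\in(0,\infty)$ so that $\Phi^{-1}(r^{-n})=Mf(x)/(C_0\|f\|_{L^\Phi})$, i.e.\ $r^{-n}=\Phi(Mf(x)/(C_0\|f\|_{L^\Phi}))$. This makes both terms of order $\Psi^{-1}(r^{-n})\|f\|_{L^\Phi}$, producing exactly \eqref{eq:180314-7} since $\Phi^{-1}(\Phi(t))=t$ whenever $0<\Phi(t)<\infty$.

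The main obstacle, as in Proposition~\ref{propoverlap}, is the degenerate cases where the natural choice of $r$ is not available: namely when $\Phi(Mf(x)/(C_0\|f\|_{L^\Phi}))$ equals $0$ or $\infty$, or when $Mf(x)=\infty$. I would dispose of these exactly as in the proof of Proposition~\ref{propoverlap}: if $\Phi(Mf(x)/(C_0\|f\|_{L^\Phi}))=0$, then $Mf(x)/\|f\|_{L^\Phi}$ is bounded by $\Phi^{-1}(0)$, and the inequality \eqref{eq:180314-7} becomes trivial after using that $\rho$ is controlled by a constant times $\Psi^{-1}(r^{-n})/\Phi^{-1}(r^{-n})$ for all $r$; the case $Mf(x)=\infty$ is vacuous because then the right-hand side of \eqref{eq:180314-7} is also infinite by monotonicity of $\Psi^{-1}$. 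The remaining verifications are routine applications of the doubling of $\Phi^{-1}$ and $\Psi^{-1}$ already recorded in Remark~\ref{dfghj}.
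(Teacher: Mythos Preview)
Your argument is correct and follows the same Hedberg-type strategy as the paper: bound $M_\rho f(x)$ by a near part (controlled by $\rho(r)Mf(x)$) plus a far part (controlled by $\Psi^{-1}(r^{-n})\|f\|_{L^\Phi}$), then choose $r$ so that $r^{-n}=\Phi(Mf(x)/(C_0\|f\|_{L^\Phi}))$, and treat the degenerate cases separately exactly as you describe.

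There is one small technical difference worth noting. The paper splits $f=f\chi_{B(x,r)}+f\chi_{{}^{\complement}B(x,r)}$ and invokes Lemma~\ref{swkshr} for the local piece, which uses \emph{both} standing hypotheses on $\rho$ (increasing, and $r\mapsto r^{-n}\rho(r)$ decreasing). You instead split the supremum over radii at the single threshold $s=r$ and use only the monotonicity of $\rho$ for $s\le r$; for $s>r$ you apply Lemma~\ref{lemHold} directly to $f$ rather than to $f_2$. This is slightly cleaner: it bypasses Lemma~\ref{swkshr} entirely and shows that the decreasing hypothesis on $r^{-n}\rho(r)$ is not actually needed for this lemma. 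Otherwise the two proofs are the same, including the handling of the boundary cases $\Phi(Mf(x)/(C_0\|f\|_{L^\Phi}))\in\{0,\infty\}$.
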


\begin{proof}
First note that
\begin{equation}\label{eq:181109-1}
\lim_{r \to \infty}\rho(r) \lesssim \frac{\Psi^{-1}(0)}{\Phi^{-1}(0)}
\end{equation}
if $\Phi^{-1}(0)>0$.
Let $x\in{\mathbb R}^n$ be an arbitrary point.
We may assume that $0<Mf(x)<\infty$
keeping in mind that $f$ does not vanish on a set of positive measure.
Furthermore,
we can assume that
\[
\Phi\Big(\frac{Mf(x)}{C_0\|f\|_{L^{\Phi}}}\Big)<\infty;
\]
otherwise there is nothing to do
since $\Psi^{-1}(\infty)=\infty$.
If
\[
\Phi\Big(\frac{Mf(x)}{C_0\|f\|_{L^{\Phi}}}\Big)=0,
\]
then
\[
\Phi^{-1}(0) \ge\frac{Mf(x)}{C_0\|f\|_{L^{\Phi}}}>0
\]
according to the definition of $\Phi^{-1}$.
Thus, thanks to (\ref{eq:181109-1})
\begin{align*}
\lim_{r \to \infty}\rho(r) 
&\lesssim \frac{\Psi^{-1}(0)}{\Phi^{-1}(0)}\\
&=\frac{1}{\Phi^{-1}(0)}\Psi^{-1}\circ\Phi\left(\frac{Mf(x)}{C_0\|f\|_{L^{\Phi}}}\right)\\
&\le
\frac{C_0\|f\|_{L^{\Phi}}}{Mf(x)}\Psi^{-1}\circ\Phi\left(\frac{Mf(x)}{C_0\|f\|_{L^{\Phi}}}\right).
\end{align*}
Thus by
(\ref{eq:180314-2})
we have
\[
M_\rho f(x) \lesssim \lim_{r \to \infty}\rho(r)Mf(x)
\lesssim
\|f\|_{L^{\Phi}}\Psi^{-1}\circ\Phi\left(\frac{Mf(x)}{C_0\|f\|_{L^{\Phi}}}\right).
\]
It thus remains to handle the case where
\[
0<\Phi\left(\frac{Mf(x)}{C_0\|f\|_{L^{\Phi}}}\right)<\infty.
\]
In the case we can
choose $r>0$ such that
\[
r^{-n}=\Phi\left(\frac{Mf(x)}{C_0\|f\|_{L^{\Phi}}}\right).
\]
Let $B=B(x,r)$ and represent $f$ as
\begin{equation*}
f=f_1+f_2, \ \quad f_1=f\chi_{B},\quad
 f_2=f\chi_{{\mathbb R}^n \setminus B}
\end{equation*}
so that
$M_\rho f(x)\le M_\rho f_1(x)+M_\rho f_2(x)$.

We have (\ref{eq:180314-2})
for $f_1$.
Meanwhile by Lemma~\ref{lemHold}, 
\begin{equation*}
\begin{split}
M_{\rho} f_2(x) & = \sup_{t>0}\frac{\rho(t)}{|B(x,t)|} \int_{B(x,t)\cap {{\mathbb R}^n \setminus B(x,r)}}|f(z)|d z
\\
& =\, \sup_{r<t<\infty}\frac{\rho(t)}{|B(x,t)|} \int_{B(x,t)}|f(z)|d z
\\
&\lesssim \sup_{r<t<\infty} \rho(t) \, \Phi^{-1}(|B(x,t)|^{-1}) \, \|f\|_{L^{\Phi}(B(x,t))} \\
&\lesssim \|f\|_{L^{\Phi}} \sup_{r<t<\infty} \rho(t) \, \Phi^{-1}(t^{-n}).
\end{split}
\end{equation*}
Consequently we have by Lemma \ref{swkshr}
\begin{equation*}
M_\rho f(x) \lesssim
\rho(r) Mf(x)+\|f\|_{L^{\Phi}} \sup_{r<t<\infty} \rho(t) \, \Phi^{-1}(t^{-n}).
\end{equation*}
Thus, by \eqref{adRieszCharOrl2GFM}
and the monotonicity of $\Psi^{-1}$
we obtain
\begin{align*}
M_{\rho} f(x)  \lesssim  Mf(x)\frac{\Psi^{-1}(r^{-n})}{\Phi^{-1}(r^{-n})} + \|f\|_{L^{\Phi}} \, \Psi^{-1}(r^{-n}).
\end{align*}
Since $\Phi^{-1}(r^{-n})=\dfrac{Mf(x)}{C_0\|f\|_{L^{\Phi}}}$,
we have
\begin{align*}
 M_{\rho} f(x)
 \lesssim \|f\|_{L^{\Phi}} \, \Psi^{-1}(r^{-n})
 = \|f\|_{L^{\Phi}} \, \Psi^{-1}\left(\Phi\left(\frac{Mf(x)}{C_0\|f\|_{L^{\Phi}}}\right)\right).
\end{align*}
Therefore, we get
(\ref{eq:180314-7}).
\end{proof}

In \cite{HNS}
we obtain a counterpart to generalized Orlicz--Morrey spaces
of the second kind defined in \cite{GST}.
However, as is written in \cite{GST}
generalized Orlicz--Morrey spaces
of the second kind do not cover $L^2({\mathbb R}^n) \cap L^3({\mathbb R}^n)$.
So, the following theorem can be viewed as a different theorem
from \cite{GST}:

\begin{theorem}\label{AdamsFrMaxCharOrlGFM}
Let $\Phi, \Psi$ be Young functions.
Assume that $\rho$ is increasing
and that $r\mapsto r^{-n}\rho(r)$ is decreasing.
Then the condition \eqref{adRieszCharOrl2GFM} is necessary and sufficient
for the boundedness of $M_{\rho}$ from $L^{\Phi}({\mathbb R}^n)$ to ${\rm W}L^{\Psi}({\mathbb R}^n)$.
Moreover, if $\Phi\in\nabla_2,$ then
the condition \eqref{adRieszCharOrl2GFM} is necessary and sufficient
for the boundedness of $M_{\rho}$ from $L^{\Phi}({\mathbb R}^n)$ to $L^{\Psi}({\mathbb R}^n)$.
\end{theorem}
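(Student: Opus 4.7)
The plan is to mirror the argument used for Theorem~\ref{AdGulGenRszOrlMorNec}, splitting into the sufficiency (via the Hedberg-type inequality already proved in Lemma~\ref{lem:180314-5}) and the necessity (via testing on characteristic functions of balls).

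For \textbf{sufficiency}, I would take Lemma~\ref{lem:180314-5} as a black box. Given a non-negative $f \in L^{\Phi}({\mathbb R}^n)$ with $f \ne 0$, choose $C_0$ as in \eqref{MbdninqW}. Then from \eqref{eq:180314-7},
\[
\Psi\!\left(\frac{M_\rho f(x)}{C_1\|f\|_{L^\Phi}}\right) \le \Phi\!\left(\frac{Mf(x)}{C_0\|f\|_{L^\Phi}}\right),
\]
so chasing distribution functions exactly as in the $I_\rho$ proof yields
\[
\sup_{t>0}\Psi(t)\,m\!\left(\frac{M_\rho f}{C_1\|f\|_{L^\Phi}},t\right) \le \sup_{t>0}\Phi(t)\,m\!\left(\frac{Mf}{\|Mf\|_{\mathrm{W}L^\Phi}},t\right) \le 1,
\]
which gives $\|M_\rho f\|_{\mathrm{W}L^\Psi} \lesssim \|f\|_{L^\Phi}$. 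When $\Phi \in \nabla_2$ I would apply \eqref{MbdninqS} instead, integrating pointwise to get $\|M_\rho f\|_{L^\Psi} \lesssim \|f\|_{L^\Phi}$.

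For \textbf{necessity}, I would test on $f = \chi_{B(0,r)}$. Since $\rho$ is increasing, for every $x \in B(0,r)$ the inclusion $B(0,r) \subset B(x,2r)$ gives
\[
M_\rho\chi_{B(0,r)}(x) \ge \frac{\rho(2r)}{|B(x,2r)|}\,|B(0,r)| = 2^{-n}\rho(2r) \ge 2^{-n}\rho(r),
\]
hence $\rho(r)\,\chi_{B(0,r)}(x) \lesssim M_\rho\chi_{B(0,r)}(x)$. Taking weak $L^\Psi$-quasinorms and using the assumed boundedness together with Lemma~\ref{charorlc} yields
\[
\frac{\rho(r)}{\Psi^{-1}(|B(0,r)|^{-1})} = \rho(r)\,\|\chi_{B(0,r)}\|_{\mathrm{W}L^\Psi} \lesssim \|M_\rho\chi_{B(0,r)}\|_{\mathrm{W}L^\Psi} \lesssim \|\chi_{B(0,r)}\|_{L^\Phi} = \frac{1}{\Phi^{-1}(|B(0,r)|^{-1})},
\]
which, after absorbing the dimensional constant $|B(0,r)|^{-1} \sim r^{-n}$ via the doubling property of $\Phi^{-1}$ and $\Psi^{-1}$ (Remark~\ref{dfghj}), is exactly \eqref{adRieszCharOrl2GFM}. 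The strong-type assumption gives at least the weak-type one, so the same test function works in both cases.

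No step looks genuinely hard: the sufficiency is an almost verbatim transcription of the $I_\rho$ argument once Lemma~\ref{lem:180314-5} is in hand, and the necessity reduces to a one-line lower bound for $M_\rho$ on a characteristic function plus Lemma~\ref{charorlc}. The only delicate point is making sure the constants in the lower bound $M_\rho\chi_{B(0,r)} \gtrsim \rho(r)\chi_{B(0,r)}$ are handled correctly; this uses only that $\rho$ is increasing, with the hypothesis that $r^{-n}\rho(r)$ is decreasing entering already through Lemma~\ref{swkshr} behind the scenes of Lemma~\ref{lem:180314-5}.
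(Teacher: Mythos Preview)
Your proposal is correct and matches the paper's proof essentially line for line: sufficiency via Lemma~\ref{lem:180314-5} plus the distribution-function/modular argument from Theorem~\ref{Maxorl}, and necessity by testing on characteristic functions of balls together with Lemma~\ref{charorlc}. The only cosmetic difference is that the paper uses $\chi_{B(0,2r)}$ (so that $B(x,r)\subset B(0,2r)$ gives $M_\rho\chi_{B(0,2r)}(x)\ge\rho(r)$ directly, recorded as~\eqref{trpwesfr}) whereas you use $\chi_{B(0,r)}$ and the inclusion $B(0,r)\subset B(x,2r)$; both yield the same inequality after invoking the doubling of $\Phi^{-1},\Psi^{-1}$.
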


\begin{proof}
We start with the necessity.
For the necessity,
we can concentrate on the boundedness
 of $M_{\rho}$ from $L^{\Phi}({\mathbb R}^n)$ to ${\rm W}L^{\Psi}({\mathbb R}^n)$,
since the  boundedness
 of $M_{\rho}$ from $L^{\Phi}({\mathbb R}^n)$ to $L^{\Psi}({\mathbb R}^n)$
is stronger than the boundedness
 of $M_{\rho}$ from $L^{\Phi}({\mathbb R}^n)$ to ${\rm W}L^{\Psi}({\mathbb R}^n)$.
With this in mind,
assume that $M_{\rho}$ is bounded from $L^{\Phi}({\mathbb R}^n)$ to ${\rm W}L^{\Psi}({\mathbb R}^n)$. We utilize a trivial pointwise estimate
\begin{equation}\label{trpwesfr}
\rho(r)\chi_{B(0,r)}\leq  M_{\rho} \chi_{B(0,2r)}.
\end{equation}
Therefore, by the doubling property of $\Phi^{-1}$ and Lemma \ref{charorlc}, we have
\begin{align*}
\rho(r)&\lesssim \Psi^{-1}(r^{-n})\|M_{\rho} \chi_{B(0,2r)}\|_{{\rm W}L^{\Psi}(B(0,r))}\\
&\lesssim \Psi^{-1}(r^{-n})\|M_{\rho} \chi_{B(0,2r)}\|_{{\rm W}L^{\Psi}}
\\
&\lesssim \Psi^{-1}(r^{-n})\|\chi_{B(0,2r)}\|_{L^{\Phi}}\\
&\lesssim \frac{\Psi^{-1}(r^{-n})}{\Phi^{-1}(r^{-n})}.
\end{align*}

We move on to the sufficiency.
Here and below we let $f$ be a nonzero measurable function.
\begin{itemize}
\item
Let $C_0$ be as in \eqref{MbdninqW}. Then by \eqref{MbdninqW} and \eqref{eq:180314-7}, we have
\begin{align*}
&\sup_{r>0}\Psi(r)\, m\Big(\frac{M_{\rho} f(y)}{C_1\|f\|_{L^{\Phi}}},r\Big)
=\sup_{r>0}r\, m\Big(\Psi\Big(\frac{M_{\rho} f(y)}{C_1\|f\|_{L^{\Phi}}}\Big),r\Big)
\\
\leq &\sup_{r>0}r\, m\Big(\Phi\Big(\frac{M f(y)}{C_0\|f\|_{L^{\Phi}}}\Big),r\Big)
\leq\sup_{r>0}\Phi(r)\, m\Big(\frac{M f(y)}{\|Mf\|_{{\rm W}L^{\Phi}}},r\Big)\leq 1,
\end{align*}
i.e.
\begin{equation}\label{gfhajyufrGFM}
\|M_{\rho}f\|_{{\rm W}L^{\Psi}}\lesssim \|f\|_{L^{\Phi}}.
\end{equation}

\item
Assume in addition that $\Phi\in\nabla_2$.
Let $C_0$ be as in \eqref{MbdninqS}. By \eqref{MbdninqS} and \eqref{eq:180314-7}, we have
\begin{align*}
\int_{{\mathbb R}^n}\Psi\left(\frac{M_{\rho} f(y)}{C_1\|f\|_{L^{\Phi}}}\right)dy
&\leq  \int_{{\mathbb R}^n}\Phi\left(\frac{M f(y)}{C_0\|f\|_{L^{\Phi}}}\right)dy \\
&\leq  \int_{{\mathbb R}^n}\Phi\left(\frac{M f(y)}{\|Mf\|_{L^{\Phi}}}\right)dy \leq 1,
\end{align*}
i.e.
\begin{equation}\label{gfhajGFM}
\|M_{\rho}f\|_{L^{\Psi}}\lesssim \|f\|_{L^{\Phi}}.
\end{equation}
\end{itemize}
\end{proof}

\section{Generalized Orlicz--Morrey spaces\\ of the third kind}\label{sec:Orl-Mor}

In \cite{DerGulSam}, the generalized Orlicz--Morrey space $M^{\Phi,\varphi}({\mathbb R}^n)$ was introduced
to unify Orlicz spaces and generalized Morrey spaces.
Other definitions of generalized Orlicz--Morrey spaces can be found in \cite{Nakai2004KIT,SawSugTan}.
In words of \cite{GulHasSawNak},
our generalized Orlicz--Morrey space is the third kind and the ones in \cite{Nakai2004KIT}
and \cite{SawSugTan} are the first kind and the second kind, respectively.
Notice that the definition of the space of the third kind relies only on the fact that $L^{\Phi}({\mathbb R}^n)$
is a normed linear space, which is independent of the condition that it is generated by modulars. 

The definition of generalized Orlicz--Morrey spaces of the third kind is as follows:
\begin{definition}
Let $\varphi$ be a positive measurable function on $(0,\infty)$ and $\Phi$ any Young function.
We denote by $\mathcal{M}^{\Phi,\varphi}({\mathbb R}^n)$ the generalized Orlicz--Morrey space
 of the third kind, the space of all
functions $f\in L^{\Phi}_{\rm loc}({\mathbb R}^n)$ with finite norm
$$
\|f\|_{\mathcal{M}^{\Phi,\varphi}} = \sup\limits_{x\in{\mathbb R}^n, r>0}
\varphi(r)^{-1} \Phi^{-1}(r^{-n}) \|f\|_{L^{\Phi}(B(x,r))}.
$$
Also by ${\rm W}\mathcal{M}^{\Phi,\varphi}({\mathbb R}^n)$ we denote the weak generalized Orlicz--Morrey space of the third kind of all measurable functions $f\in {\rm W}L^{\Phi}_{\rm loc}({\mathbb R}^n)$ for which
$$
\|f\|_{{\rm W}\mathcal{M}^{\Phi,\varphi}}
=
\sup\limits_{x\in{\mathbb R}^n, r>0} \varphi(r)^{-1} \Phi^{-1}(r^{-n}) \|f\|_{{\rm W}L^{\Phi}(B(x,r))} < \infty.
$$
\end{definition}

A function $\varphi:(0,\infty) \to (0,\infty)$ is said to be almost increasing (resp.
almost decreasing) if there exists a constant $C > 0$ such that
$$
\varphi(r)\leq C \varphi(s)\qquad (\text{resp. }\varphi(r)\geq C \varphi(s))\quad \text{for  } r\leq s.
$$
For a Young function $\Phi$, we denote by ${\mathcal{G}}_{\Phi}$
the set of all $\varphi:(0,\infty) \to (0,\infty)$ functions
such that $t\in(0,\infty) \mapsto \frac{\varphi(t)}{\Phi^{-1}(t^{-n})}$
is almost increasing and $t\in(0,\infty) \mapsto \frac{\varphi(t)}{\Phi^{-1}(t^{-n})t^n}$ is almost decreasing.
Note that $\varphi\in{\mathcal{G}}_{\Phi}$ implies doubling condition of $\varphi$.

We investigate the structure of $ \mathcal{M}^{\Phi,\varphi}({\mathbb R}^n)$.
We denote by $\Theta$ the set of all measurable functions equivalent to $0$ on ${\mathbb R}^n$.
To exclude some trivial cases,
we can use
the following lemma was proved in \cite{DerGulHasFr}:
\begin{lemma}\label{Lemma1Orl}
Let $\Phi$ be a Young function and $ \varphi $ be a positive measurable function on $(0,\infty)$.
\begin{enumerate}
\item[\rm(i)] If
\begin{align}\label{L11Orl}
\sup_{ t<r<\infty }\frac{\Phi^{-1}(r^{-n})}{\varphi(r)}=\infty\quad\textrm{ for some } t>0
\end{align}
then $ \mathcal{M}^{\Phi,\varphi}({\mathbb R}^n)=\Theta$.
\item[\rm(ii)] If
\begin{align}\label{L12Orl}
\sup_{ 0<r<\tau} \varphi(r)^{-1}=\infty\quad\textrm{ for some } \tau>0
\end{align}
then $ \mathcal{M}^{\Phi,\varphi}({\mathbb R}^n)=\Theta $.
\end{enumerate}
\end{lemma}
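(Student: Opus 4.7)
The plan is to deduce from $\|f\|_{\mathcal{M}^{\Phi,\varphi}}<\infty$ the fundamental estimate
\[
\|f\|_{L^\Phi(B(x,r))}\le \|f\|_{\mathcal{M}^{\Phi,\varphi}}\,\frac{\varphi(r)}{\Phi^{-1}(r^{-n})}\qquad (x\in\mathbb{R}^n,\ r>0),
\]
which is immediate from the definition of the norm, and then use \eqref{L11Orl} or \eqref{L12Orl} to force the right-hand side to vanish along well-chosen sequences of radii.

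For part (i), I fix an arbitrary $x\in\mathbb{R}^n$ and, via \eqref{L11Orl}, extract a sequence $r_k>t$ with $\varphi(r_k)/\Phi^{-1}(r_k^{-n})\to 0$. Since $B(x,t)\subset B(x,r_k)$, the monotonicity of the $L^\Phi$-norm with respect to the underlying set gives
\[
\|f\|_{L^\Phi(B(x,t))}\le \|f\|_{L^\Phi(B(x,r_k))}\longrightarrow 0,
\]
so $f=0$ a.e.\ on $B(x,t)$ for every $x$, hence $f\equiv 0$ a.e.\ on $\mathbb{R}^n$.

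For part (ii), \eqref{L12Orl} supplies a sequence $r_k\in(0,\tau)$ with $\varphi(r_k)\to 0$. Lemma~\ref{lemHold} converts the $L^\Phi$-estimate into an $L^1$-estimate:
\[
\int_{B(x,r_k)}|f(y)|\,dy \lesssim r_k^n\,\Phi^{-1}(r_k^{-n})\,\|f\|_{L^\Phi(B(x,r_k))} \lesssim r_k^n\,\varphi(r_k)\,\|f\|_{\mathcal{M}^{\Phi,\varphi}} \le \tau^n\,\varphi(r_k)\,\|f\|_{\mathcal{M}^{\Phi,\varphi}},
\]
which tends to $0$. By Bolzano--Weierstrass, $\{r_k\}$ has a subsequence converging to some $r_\ast\in[0,\tau]$. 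If $r_\ast>0$, pick $\delta\in(0,r_\ast)$; then $B(x,\delta)\subset B(x,r_k)$ for large $k$, so $\int_{B(x,\delta)}|f|=0$ for every $x$ and hence $f\equiv 0$ a.e. If $r_\ast=0$, the average $|B(x,r_k)|^{-1}\int_{B(x,r_k)}|f|\lesssim \varphi(r_k)\,\|f\|_{\mathcal{M}^{\Phi,\varphi}}\to 0$, and the Lebesgue differentiation theorem forces $|f(x)|=0$ for a.e.\ $x$.

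The main subtlety lies in part (ii): the hypothesis does not specify where in $(0,\tau)$ the function $\varphi$ becomes small, so one must handle the ``radii bounded below'' scenario (via set inclusion) and the ``radii tending to zero'' scenario (via Lebesgue differentiation) separately. Lemma~\ref{lemHold} plays the role of the common bridge that translates the $L^\Phi$-norm control provided by the hypothesis into the $L^1$-average control needed in both sub-cases.
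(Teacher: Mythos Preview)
The paper does not give its own proof of this lemma; it simply cites \cite{DerGulHasFr}. Your argument is correct and self-contained: part (i) is the standard monotonicity-of-norms argument, and in part (ii) your use of Lemma~\ref{lemHold} to pass to $L^1$-averages, followed by the dichotomy on the limit of a convergent subsequence of $\{r_k\}$ (set inclusion when $r_\ast>0$, Lebesgue differentiation when $r_\ast=0$), handles all cases cleanly. One tiny remark: in the $r_\ast=0$ case you need $f\in L^1_{\rm loc}$ to invoke Lebesgue differentiation, but this is automatic from $f\in L^{\Phi}_{\rm loc}$ and Lemma~\ref{lemHold}, and the fixed sequence $r_k\to0$ is independent of $x$, so the a.e.\ limit along $r_k$ agrees with $|f(x)|$.
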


\begin{remark}\label{ntvgom}
If
\[
\sup_{0<r \le t}
\frac{\Phi^{-1}(r^{-n})r^n}{\varphi(r)}
=\infty\quad\text{for some } t>0,
\]
then
${\mathcal M}^{\Phi,\varphi}=\Theta$.
Actually, by Remark~\ref{dfghj} one has
$$
\Phi^{-1}(r^{-n})r^n\le\Phi^{-1}(t^{-n})t^n<\infty
$$ and
then
$$
 \sup_{0<r<t} \varphi(r)^{-1}=\infty.
$$
\end{remark}

\begin{remark}
Based on Lemma~\ref{Lemma1Orl} and Remark~\ref{ntvgom}
and an observation similar to the one made by Nakai \cite[p. 446]{Nakai2000SCM}
it can be assumed that $\varphi \in\mathcal{G}_{\Phi}$
in the definition of $\mathcal{M}^{\Phi,\varphi}({\mathbb R}^n)$.
More explicitly,
we have the following observation:
%

\noindent{\rm (i)}
By Lemma~\ref{Lemma1Orl}
we may assume that $\displaystyle\inf_{r\le t<\infty}\frac{\varphi(t)}{\Phi^{-1}(t^{-n})}>0$ for every $r>0$.
Let
\begin{equation*}
 \psi(r)=\Phi^{-1}(r^{-n})\displaystyle\inf_{r\le t<\infty}\frac{\varphi(t)}{\Phi^{-1}(t^{-n})},
 \quad r>0.
\end{equation*}
Then $r\in(0,\infty) \mapsto \frac{\psi(r)}{\Phi^{-1}(r^{-n})}$ is increasing
and
$\mathcal{M}^{\Phi,\varphi}({\mathbb R}^n)=\mathcal{M}^{\Phi,\psi}({\mathbb R}^n)$ with equivalent norms.
Indeed, it is clear that $\psi(r)\le\varphi(r)$ by the definition of $\psi$.
Hence
${\mathcal M}^{\Phi,\psi}({\mathbb R}^n)\subset{\mathcal M}^{\Phi,\varphi}({\mathbb R}^n)$
and
$\|f\|_{\mathcal{M}^{\Phi,\varphi}}\le\|f\|_{\mathcal{M}^{\Phi,\psi}}$.
On the other hand,
\begin{align*}
 \sup_{r>0}\psi(r)^{-1}\Phi^{-1}(r^{-n})\|f\|_{L^{\Phi}(B(x,r))}
 &=\sup_{r>0}\frac1{\inf_{r\le t<\infty}\frac{\varphi(t)}{\Phi^{-1}(t^{-n})}} \|f\|_{L^{\Phi}(B(x,r))} \\
 &=\sup_{r>0}\left(\sup_{r\le t<\infty}\frac{\Phi^{-1}(t^{-n})}{\varphi(t)}\right) \|f\|_{L^{\Phi}(B(x,r))}\\
 &\le\sup_{t>0}\varphi(t)^{-1} \Phi^{-1}(t^{-n})\|f\|_{L^{\Phi}(B(x,t))}.
\end{align*}
Hence
${\mathcal M}^{\Phi,\varphi}({\mathbb R}^n)\subset{\mathcal M}^{\Phi,\psi}({\mathbb R}^n)$
and
$\|f\|_{\mathcal{M}^{\Phi,\psi}}\le\|f\|_{\mathcal{M}^{\Phi,\varphi}}$.

\noindent{\rm (ii)}
By Remark~\ref{ntvgom}
we may assume that $\displaystyle\inf_{0<t\le r}\frac{\varphi(t)}{\Phi^{-1}(t^{-n})t^n}>0$ for every $r>0$.
Define $\psi(r)$ by the formula:
\[
\sup_{t \in (0,r]}
\frac{\Phi^{-1}(t^{-n})t^n}{\varphi(t)}
=
\frac{\Phi^{-1}(r^{-n})r^n}{\psi(r)}.
\]
It is easy to see that
$\psi(r) \le \varphi(r)$
for any $r>0$.
Thus,
${\mathcal M}^{\Phi,\psi}({\mathbb R}^n)\subset{\mathcal M}^{\Phi,\varphi}({\mathbb R}^n)$
and
$\|f\|_{\mathcal{M}^{\Phi,\varphi}}\le\|f\|_{\mathcal{M}^{\Phi,\psi}}$.
Conversely,
let $f \in {\mathcal M}^{\Phi,\varphi}({\mathbb R}^n)$.
For any $r\in(0,\infty)$,
choose $t \in (0,r]$ so that
\[
\frac{\Phi^{-1}(r^{-n})r^n}{\psi(r)}
\le 2
\frac{\Phi^{-1}(t^{-n})t^n}{\varphi(t)},
\]
and cover $B(x,r)$ with a family of $N$ balls
$\{B(x_j,t)\}_{j=1}^N$,
where $N \lesssim r^{-n}t^n$.
Let $j_0$ be such that
\begin{align*}
\|f\|_{L^\Phi(B(x,r))}
&\le N\|f\|_{L^\Phi(B(x_{j_0},t))}\\
&=N\max_{j=1,2\ldots,N}\|f\|_{L^\Phi(B(x_{j},t))}\\
&\lesssim t^{-n}r^n\|f\|_{L^\Phi(B(x_{j_0},r))}.
\end{align*}
Thus,
\begin{align*}
\frac{\Phi^{-1}(r^{-n})}{\psi(r)}
\|f\|_{L^\Phi(B(x,r))}
&\lesssim
\frac{r^n\Phi^{-1}(r^{-n})}{\psi(r)t^n}
\|f\|_{L^\Phi(B(x_{j_0},t))}\\
&\le
2\frac{\Phi^{-1}(t^{-n})}{\varphi(t)}
\|f\|_{L^\Phi(B(x_{j_0},t))}\\
&\le 2
\|f\|_{{\mathcal M}^{\Phi,\varphi}},
\end{align*}
implying
$f \in {\mathcal M}^{\Phi,\varphi}({\mathbb R}^n)$
and
$\|f\|_{\mathcal{M}^{\Phi,\psi}}\le\|f\|_{\mathcal{M}^{\Phi,\varphi}}$.
Thus,
$
{\mathcal M}^{\Phi,\psi}({\mathbb R}^n)
\hookrightarrow
{\mathcal M}^{\Phi,\varphi}({\mathbb R}^n).
$
\end{remark}
As the following lemma shows,
${\mathcal{G}}_{\Phi}$
is useful:
\begin{lemma}[\cite{GulDerPot}]\label{charOrlMor}
Let $B_0:=B(x_0,r_0)$. If $\varphi\in{\mathcal{G}}_{\Phi}$ is almost decreasing, then there exist $C>0$ such that
$$
\frac{1}{\varphi(r_0)}\leq \|\chi_{B_0}\|_{\mathcal{M}^{\Phi,\varphi}}\leq \frac{C}{\varphi(r_0)}.
$$
\end{lemma}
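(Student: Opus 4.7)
The plan is to compute both sides using Lemma~\ref{charorlc}, which tells us that
\[
\|\chi_{B_0}\|_{L^\Phi(B(x,r))}=\|\chi_{B_0\cap B(x,r)}\|_{L^\Phi}=\frac{1}{\Phi^{-1}(|B_0\cap B(x,r)|^{-1})}
\]
whenever the intersection has positive measure (and the norm is zero otherwise). This reduces the task to estimating $|B_0\cap B(x,r)|^{-1}$, after which the two hypotheses on $\varphi$ can be applied in different scale regimes.

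For the lower bound, I would simply take $x=x_0$ and $r=r_0$ in the defining supremum of $\|\chi_{B_0}\|_{{\mathcal M}^{\Phi,\varphi}}$. Since $B_0\cap B(x_0,r_0)=B_0$ and $|B_0|\sim r_0^n$, the doubling property of $\Phi^{-1}$ recorded in Remark~\ref{dfghj} yields $\Phi^{-1}(|B_0|^{-1})\sim \Phi^{-1}(r_0^{-n})$, and the ratio of the two $\Phi^{-1}$ terms telescopes to a dimensional constant, producing $\|\chi_{B_0}\|_{{\mathcal M}^{\Phi,\varphi}}\gtrsim 1/\varphi(r_0)$.

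For the upper bound, fix arbitrary $x\in{\mathbb R}^n$ and $r>0$ (with the intersection nonempty) and split according to the size of $r$ relative to $r_0$. If $r\le r_0$, estimate $|B_0\cap B(x,r)|\le|B(x,r)|\lesssim r^n$; by doubling of $\Phi^{-1}$ this gives $\|\chi_{B_0}\|_{L^\Phi(B(x,r))}\lesssim 1/\Phi^{-1}(r^{-n})$, so the summand under the sup is $\lesssim 1/\varphi(r)$, and the almost decreasing property of $\varphi$ yields $1/\varphi(r)\lesssim 1/\varphi(r_0)$. If instead $r>r_0$, use the opposite estimate $|B_0\cap B(x,r)|\le|B_0|\lesssim r_0^n$, so $\Phi^{-1}(|B_0\cap B(x,r)|^{-1})\gtrsim \Phi^{-1}(r_0^{-n})$, and the summand is
\[
\lesssim \frac{\Phi^{-1}(r^{-n})}{\varphi(r)\,\Phi^{-1}(r_0^{-n})}.
\]
Now invoke the first clause in the definition of ${\mathcal G}_\Phi$: the map $t\mapsto \varphi(t)/\Phi^{-1}(t^{-n})$ is almost increasing, so for $r\ge r_0$ we get $\Phi^{-1}(r^{-n})/\varphi(r)\lesssim \Phi^{-1}(r_0^{-n})/\varphi(r_0)$, and the bound $\lesssim 1/\varphi(r_0)$ follows. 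Taking supremum over $(x,r)$ in both cases concludes the argument.

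The main obstacle is essentially bookkeeping: making sure the two hypotheses on $\varphi$ (almost decreasing on the one hand, and $\varphi\in\mathcal{G}_\Phi$ on the other) are each activated in the correct scale regime. I do not expect the almost-decreasing part of $\mathcal{G}_\Phi$ (involving the extra factor $t^n$) to be needed here; the simpler almost-decreasing assumption on $\varphi$ suffices for $r\le r_0$, and the almost-increasing clause of $\mathcal{G}_\Phi$ handles $r>r_0$.
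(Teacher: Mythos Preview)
The paper does not give its own proof of this lemma; it is simply quoted from \cite{GulDerPot} and used as a black box. Your argument is correct and is the standard one: evaluate the supremum at $x=x_0$, $r=r_0$ for the lower bound, and for the upper bound split into $r\le r_0$ (use that $\varphi$ is almost decreasing) and $r>r_0$ (use that $t\mapsto\varphi(t)/\Phi^{-1}(t^{-n})$ is almost increasing). The only quibble is cosmetic: with your choice $r=r_0$ the ratio $\Phi^{-1}(r_0^{-n})/\Phi^{-1}(|B_0|^{-1})$ picks up the dimensional constant coming from $|B_0|=v_n r_0^{\,n}$, so what you actually obtain is $\|\chi_{B_0}\|_{\mathcal{M}^{\Phi,\varphi}}\ge c_n/\varphi(r_0)$ rather than the sharp $1/\varphi(r_0)$ asserted in the statement (the inequality with constant $1$ holds whenever $v_n\ge1$, since then $\Phi^{-1}(|B_0|^{-1})\le\Phi^{-1}(r_0^{-n})$). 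This is immaterial for every application in the paper, where only the two-sided equivalence $\|\chi_{B_0}\|_{\mathcal{M}^{\Phi,\varphi}}\sim1/\varphi(r_0)$ is used.
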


\section{Generalized fractional integrals on generalized Orlicz--Morrey spaces}\label{sec:Ir Orl-Mor}

We remark that there are two types of the boundedness
of the fractional integral operators.
One is the Spanne-type boundedness obtained in
\cite{Spanne65}.
Another boundedness is of Adams-type obtained by Adams
\cite{Adams75-2}.
In the classical case due to the fact that Morrey spaces
are nested
we can say that the Adams-type boundedness is stronger
than the Spanne-type boundedness.
However, we need to depend on the pointwise estimate
of Hedberg-type
\cite{Hedberg},
so the Adams-type boundedness is unavailable
for local Morrey spaces.
In this section we give a characterization
for the Spanne-type boundedness
and
the Adams-type boundedness of the operator $I_{\rho}$
on generalized Orlicz--Morrey spaces, respectively.

\subsection{Spanne-type result}

We need the  following lemma is valid:
\begin{lemma}\label{lem3.3.Pot2}
Let $\Phi, \Psi$ be Young functions,
and let $\rho$ satisfy \eqref{int rho} and \eqref{sup rho}.
Assume that the condition \eqref{adRieszCharOrl1} is fulfilled.
Then there exists a positive constant $C$ such that,
for all $f\in L^{\Phi}_{\rm loc}({\mathbb R}^n)$ and $B=B(x,r)$,
\begin{align}\label{eq3.5.w}
\lefteqn{
 \|I_{\rho} f\|_{{\rm W}L^{\Psi}(B)}
}\nonumber\\
& \le C\|f\|_{L^{\Phi}(2B)} +
 \frac{C}{\Psi^{-1}\big(r^{-n}\big)}
 \int_{2k_1r}^{\infty} \|f\|_{L^{\Phi}(B(x,t))} \, \Phi^{-1}\big(t^{-n}\big)\rho(t) \frac{dt}{t}.
\end{align}
Moreover if we assume $\Phi\in\nabla_2$, the following inequality is also valid:
\begin{align}\label{eq3.5.}
\lefteqn{
 \|I_{\rho} f\|_{L^{\Psi}(B)}
}\nonumber\\
& \le C\|f\|_{L^{\Phi}(2B)} +
 \frac{C}{\Psi^{-1}\big(r^{-n}\big)}
 \int_{2k_1r}^{\infty} \|f\|_{L^{\Phi}(B(x,t))} \, \Phi^{-1}\big(t^{-n}\big)\rho(t) \frac{dt}{t}.
\end{align}
\end{lemma}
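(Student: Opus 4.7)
The plan is to split $f = f_1 + f_2$ with $f_1 := f \chi_{2B}$ and $f_2 := f \chi_{\R^n \setminus 2B}$, and estimate $\|I_\rho f_1\|_{{\rm W}L^\Psi(B)}$ and $\|I_\rho f_2\|_{{\rm W}L^\Psi(B)}$ separately; the proof of \eqref{eq3.5.} is identical, replacing the weak norm by the strong one and invoking the $\Phi\in\nabla_2$ version of Theorem \ref{AdGulGenRszOrlMorNec}.

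For the local piece $f_1$, I would simply invoke the global boundedness already proved in Theorem \ref{AdGulGenRszOrlMorNec}: under the hypotheses \eqref{int rho}, \eqref{sup rho} and \eqref{adRieszCharOrl1}, $I_\rho$ maps $L^\Phi(\R^n)$ into ${\rm W}L^\Psi(\R^n)$, hence
$$
\|I_\rho f_1\|_{{\rm W}L^\Psi(B)} \le \|I_\rho f_1\|_{{\rm W}L^\Psi} \lesssim \|f_1\|_{L^\Phi} = \|f\|_{L^\Phi(2B)},
$$
which furnishes the first term on the right-hand side of \eqref{eq3.5.w}.

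For $f_2$, the strategy is to derive a pointwise bound on $I_\rho f_2(z)$ that is uniform for $z \in B$, and then pass to the ${\rm W}L^\Psi(B)$ norm via Lemma \ref{charorlc}. The key input is a kernel decomposition derived from \eqref{sup rho}: since $\rho(s) \le \tilde\rho(s) = \int_{k_1 s}^{k_2 s} \rho(t)\,dt/t$ and $t\sim s$ on $[k_1 s,k_2 s]$, one has
$$
\frac{\rho(s)}{s^n} \lesssim \int_{k_1 s}^{k_2 s} \frac{\rho(t)}{t^n}\,\frac{dt}{t}.
$$
For $z \in B$ and $y \notin 2B$ one has $|z-y| \ge r$ and $|z-y| \sim |x-y|$. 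Substituting $s = |z-y|$, swapping the order of integration by Fubini, bounding the domain in $y$ by a ball centered at $x$ of comparable radius, and applying Lemma \ref{lemHold} together with the doubling of $\Phi^{-1}$ yields
$$
I_\rho f_2(z) \lesssim \int_{2k_1 r}^\infty \rho(t)\,\Phi^{-1}(t^{-n})\,\|f\|_{L^\Phi(B(x,t))}\,\frac{dt}{t}
$$
uniformly in $z \in B$. Since the right-hand side is constant in $z$, taking the ${\rm W}L^\Psi(B)$ norm and using Lemma \ref{charorlc} to evaluate $\|\chi_B\|_{{\rm W}L^\Psi} = 1/\Psi^{-1}(r^{-n})$ produces the second term of \eqref{eq3.5.w}.

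The main obstacle will be the bookkeeping of the geometric constants $k_1,k_2$ and the factor of $2$ coming from the comparison $|z-y| \sim |x-y|$, in order that the lower limit of integration emerges precisely as $2k_1 r$ and that the Orlicz norm appears on the ball $B(x,t)$ centered at the center of $B$ rather than on some dilate of $B(z,\cdot)$. Once the kernel estimate above and the Fubini rearrangement are set up correctly, the remaining steps are routine applications of Lemma \ref{lemHold}, the doubling of $\Phi^{-1}$ from Remark \ref{dfghj}, and Lemma \ref{charorlc}.
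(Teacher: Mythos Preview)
Your proposal is correct and follows essentially the same strategy as the paper: split $f=f_1+f_2$, invoke Theorem~\ref{AdGulGenRszOrlMorNec} for the local piece $f_1$, and for $f_2$ derive a pointwise bound uniform on $B$ and then pass to the ${\rm W}L^{\Psi}(B)$ norm via Lemma~\ref{charorlc}. The only difference is in how the pointwise bound on $I_\rho f_2$ is obtained: the paper decomposes $\R^n\setminus 2B$ into dyadic annuli $2^{j+1}B\setminus 2^jB$, bounds $\sup\rho$ on each annulus by $\int_{2^jk_1r}^{c\,2^jk_2r}\rho(t)\,dt/t$ via \eqref{sup rho}, applies Lemma~\ref{lemHold}, and then sums; you instead expand the kernel pointwise using \eqref{sup rho} and swap the order of integration by Fubini. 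These are equivalent rearrangements of the same computation, and both yield the same integral bound.
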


\begin{proof}
We represent  $f$ as
\begin{equation*}
f=f_1+f_2, \ \quad f_1=f\chi _{2B},\quad
 f_2=f-f_1, \ \quad r>0.
\end{equation*}
Then we have
$$
\|I_\rho f\|_{WL^{\Psi}(B)} \le 2(\|I_\rho f_1\|_{WL^{\Psi}(B)} +\|I_\rho f_2\|_{WL^{\Psi}(B)}).
$$

From the
boundedness of $I_\rho$ from $L^{\Phi}({\mathbb R}^n)$ to $WL^{\Psi}({\mathbb R}^n)$ (see Theorem \ref{AdGulGenRszOrlMorNec})
it follows that:
\begin{equation}\label{ghjklyu}
 \|I_\rho f_1\|_{WL^{\Psi}(B)}
 \lesssim \|I_\rho f_1\|_{WL^{\Psi}({\mathbb R}^n)}\leq
 C\|f_1\|_{L^{\Phi}({\mathbb R}^n)}=C\|f\|_{L^{\Phi}(2B)},
\end{equation}
where constant $C >0$ is independent of $f$.

For $f_2$ we have
\begin{equation*}
 \left|I_{\rho} f_2(y)\right|
 \le
 \int_{{\,^{^{\complement}}\!}(2B)} \frac{\rho(|y-z|)}{|y-z|^{n}} |f(z)| \,dz
 =
 \sum_{j=1}^{\infty}
 \int_{2^{j+1}B\setminus 2^{j}B} \frac{\rho(|y-z|)}{|y-z|^{n}} |f(z)| \,dz.
\end{equation*}
A geometric observation shows that $y\in B$, $z\in {\mathbb R}^n \setminus (2B)$ implies
$$
\frac{1}{2}|x-z|\le |y-z|\le\frac{3}{2}|x-z|.
$$
Using \eqref{sup rho} and Lemma \ref{lemHold},
we have
\begin{align*}
 &\int_{2^{j+1}B\setminus 2^{j}B} \frac{\rho(|y-z|)}{|y-z|^{n}} |f(z)| \,dz \\
 &\lesssim
 \left(\sup_{2^{j-1}r\le t\le 3\cdot2^jr}\rho(t)\right)
 \frac1{|2^jB|}\int_{2^{j+1}B\setminus 2^{j}B} |f(z)| \,dz  \\
 &\lesssim
 \int_{2^j k_1 r}^{3\cdot2^j k_2 r}\frac{\rho(t)}{t}\,dt \
 \|f\|_{L^{\Phi}(B(x,2^{j+1}r))} \, \Phi^{-1}\big(|2^{j+1}B|^{-1}\big) \\
 &\lesssim
 \int_{2^j k_1 r}^{2^j\max(3k_2,4) r}\|f\|_{L^{\Phi}(B(x,t))}\, \Phi^{-1}\big(t^{-n}\big)\frac{\rho(t)}{t}\,dt.
\end{align*}
Then
\begin{equation}\label{sal00Pot2}
 \left|I_{\rho} f_2(y)\right|
 \lesssim
 \int_{2k_1r}^{\infty}\|f\|_{L^{\Phi}(B(x,t))}\, \Phi^{-1}\big(t^{-n}\big)\frac{\rho(t)}{t}\,dt.
\end{equation}
Thus by Lemma \ref{charorlc} we have
\begin{equation}\label{asdertg}
\|I_{\rho} f_2\|_{WL^{\Psi}(B)} \lesssim  \frac{1}{\Psi^{-1}\big(r^{-n}\big)} \int_{2k_1r}^{\infty} \|f\|_{L^{\Phi}(B(x,t))} \, \Phi^{-1}\big(t^{-n}\big)\rho(t) \frac{dt}{t}.
\end{equation}
Therefore we obtain \eqref{eq3.5.w} by \eqref{ghjklyu} and \eqref{asdertg}.

If $\Phi\in\nabla_2$, then we can use strong type inequality instead of \eqref{ghjklyu}
and obtain \eqref{eq3.5.} by using the same argument.
\end{proof}

\begin{remark}
In the case $\Phi(t)=t^p~(1\le p<\infty)$ Lemma \ref{lem3.3.Pot2} was proved in \cite{GIKS2015}.
\end{remark}

The following theorem gives necessary and sufficient conditions for Spanne-type
boundedness of the operator $I_{\rho}$ from $\mathcal{M}^{\Phi,\varphi_1}({\mathbb R}^n)$ to $\mathcal{M}^{\Psi,\varphi_2}({\mathbb R}^n)$.

\begin{theorem}[Spanne-type result]\label{SpGulGenRszOrlMorNec}
Let $\Phi, \Psi$ be Young functions,
and let $\varphi_1\in{\mathcal{G}}_{\Phi}$ and $\varphi_2\in{\mathcal{G}}_{\Psi}$.
\begin{enumerate}
\item
Let $\rho$ satisfy \eqref{int rho} and \eqref{sup rho}.
Assume that \eqref{adRieszCharOrl1} is fulfilled.
Then the conditions
\begin{equation}\label{VZACSnox}
\frac{\varphi_1(r)}{\Phi^{-1}\big(r^{-n}\big)}   \le  C \,\frac{\varphi_2\big(r\big)}{\Psi^{-1}\big(r^{-n}\big)},
\end{equation}
\begin{equation}\label{eq3.6.VZPotnox}
\int_{r}^{\infty} \varphi_1(t) \rho(t) \frac{dt}{t} \le  C \,\varphi_2(r),
\end{equation}
for all $r>0$, where $C>0$ does not depend on $r$, are sufficient for the boundedness of $I_{\rho}$ from $\mathcal{M}^{\Phi,\varphi_1}({\mathbb R}^n)$ to ${\rm W}\mathcal{M}^{1,\varphi_2}({\mathbb R}^n)$.
Moreover, if $\Phi\in\nabla_2$,
then the conditions
\eqref{VZACSnox} and \eqref{eq3.6.VZPotnox}
are sufficient for the boundedness of $I_{\rho}$ from $\mathcal{M}^{\Phi,\varphi_1}({\mathbb R}^n)$ to $\mathcal{M}^{\Psi,\varphi_2}({\mathbb R}^n)$.

\item
Let $\varphi_1$ be almost decreasing.
Then the condition
\begin{equation}\label{condSpan}
\varphi_1(r) \int_{0}^{r}\frac{\rho(t)}{t}dt\le C \varphi_2(r),
\end{equation}
for all $r>0$, where $C>0$ does not depend on $r$,
is necessary for the boundedness of $I_{\rho}$
from $\mathcal{M}^{\Phi,\varphi_1}({\mathbb R}^n)$ to ${\rm W}\mathcal{M}^{1,\varphi_2}({\mathbb R}^n)$
and hence $\mathcal{M}^{\Phi,\varphi_1}({\mathbb R}^n)$ to $\mathcal{M}^{\Psi,\varphi_2}({\mathbb R}^n)$.

\item
Let $\rho$ satisfy \eqref{int rho} and \eqref{sup rho}.
Assume that \eqref{adRieszCharOrl1} is fulfilled,
that $\varphi_1$ is almost decreasing
and that $\varphi_1$ and $\varphi_2$ satisfy \eqref{VZACSnox}.
Assume also that $\varphi_1$ and $\rho$ satisfy the condition
\begin{equation}\label{intcondSpannox}
\int_{r}^{\infty} \varphi_1(t) \rho(t) \frac{dt}{t} \le  C \,\varphi_1(r) \rho(r),
\end{equation}
for all $r>0$, where $C>0$ does not depend on $r$.
Then the condition \eqref{condSpan}
is necessary and sufficient for the boundedness of $I_{\rho}$
from $\mathcal{M}^{\Phi,\varphi_1}({\mathbb R}^n)$ to ${\rm W}\mathcal{M}^{\Psi,\varphi_2}({\mathbb R}^n)$.
Moreover, if $\Phi\in\nabla_2$,
then the condition \eqref{condSpan}
is necessary and sufficient for the boundedness of $I_{\rho}$
from $\mathcal{M}^{\Phi,\varphi_1}({\mathbb R}^n)$ to $\mathcal{M}^{\Psi,\varphi_2}({\mathbb R}^n)$.
\end{enumerate}
\end{theorem}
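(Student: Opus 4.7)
My plan is to prove the three items in turn; item (iii) will fall out of (i) and (ii) once we derive \eqref{eq3.6.VZPotnox} from the hypotheses of (iii). For the sufficiency in (i), I would apply Lemma \ref{lem3.3.Pot2} to a fixed ball $B = B(x,r)$ and then multiply the resulting estimate by $\varphi_2(r)^{-1} \Psi^{-1}(r^{-n})$. The local term $\|f\|_{L^\Phi(2B)}$ is controlled by $\|f\|_{\mathcal{M}^{\Phi,\varphi_1}}$ via \eqref{VZACSnox} at radius $2r$ together with the doubling of $\Phi^{-1}, \Psi^{-1}, \varphi_1, \varphi_2$. For the tail term I would insert $\|f\|_{L^\Phi(B(x,t))}\, \Phi^{-1}(t^{-n}) \le \varphi_1(t)\, \|f\|_{\mathcal{M}^{\Phi,\varphi_1}}$ under the integral and then apply \eqref{eq3.6.VZPotnox} at the shifted radius $2k_1 r$, absorbing the shift via doubling of $\varphi_2$. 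Taking the supremum over $x, r$ gives the weak bound; the strong version uses \eqref{eq3.5.} from Lemma \ref{lem3.3.Pot2} in place of \eqref{eq3.5.w} and follows the same pattern.

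For the necessity in (ii), I would test $I_\rho$ on $f = \chi_{B(0,r)}$. Lemma \ref{pwsgenfr} gives
\[
I_\rho \chi_{B(0,r)}(x) \gtrsim \Bigl( \int_0^{r/2} \frac{\rho(t)}{t}\, dt \Bigr) \chi_{B(0,r/2)}(x),
\]
so restricting the ${\rm W}\mathcal{M}^{\Psi,\varphi_2}$-norm to the ball $B(0,r/2)$ and applying Lemma \ref{charorlc} yields
\[
\|I_\rho \chi_{B(0,r)}\|_{{\rm W}\mathcal{M}^{\Psi,\varphi_2}} \gtrsim \varphi_2(r/2)^{-1} \int_0^{r/2} \frac{\rho(t)}{t}\, dt.
\]
Since $\varphi_1$ is almost decreasing and lies in $\mathcal{G}_\Phi$, Lemma \ref{charOrlMor} gives $\|\chi_{B(0,r)}\|_{\mathcal{M}^{\Phi,\varphi_1}} \lesssim \varphi_1(r)^{-1}$. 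Combining these with the assumed boundedness of $I_\rho$ and using doubling of $\varphi_1, \varphi_2$ to eliminate the factor $1/2$ yields \eqref{condSpan}.

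Item (iii) now reduces cleanly to the previous two. The necessity is immediate from (ii); for the sufficiency it suffices to deduce \eqref{eq3.6.VZPotnox} and appeal to (i). From \eqref{sup rho} we have $\rho(r) \lesssim \int_0^{k_2 r} \rho(t)/t\, dt$, so applying \eqref{condSpan} at radius $k_2 r$ and using doubling of $\varphi_1, \varphi_2$ yields $\varphi_1(r)\rho(r) \lesssim \varphi_2(r)$. Plugging this into \eqref{intcondSpannox} gives
\[
\int_r^\infty \varphi_1(t)\rho(t)\, \frac{dt}{t} \lesssim \varphi_1(r)\rho(r) \lesssim \varphi_2(r),
\]
which is exactly \eqref{eq3.6.VZPotnox}.

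The main obstacle I foresee is the bookkeeping of the constants $k_1, k_2$ from \eqref{sup rho} and the doubling factors: both in (i), where the tail integral starts at $2k_1 r$ rather than $r$, and in (iii), where one must pivot between $\rho(r)$ and $\int_0^{k_2 r} \rho(t)/t\, dt$ in order to merge \eqref{condSpan} with \eqref{intcondSpannox}. Once these rescalings are absorbed by doubling, the proof is a routine chaining of Lemmas \ref{lem3.3.Pot2}, \ref{pwsgenfr}, \ref{charorlc} and \ref{charOrlMor}.
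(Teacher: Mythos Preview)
Your proposal is correct and follows essentially the same route as the paper: Lemma~\ref{lem3.3.Pot2} together with \eqref{VZACSnox} and \eqref{eq3.6.VZPotnox} for part~(i), and testing on $\chi_{B(0,r)}$ via Lemmas~\ref{pwsgenfr} and~\ref{charOrlMor} for part~(ii). Your treatment of part~(iii)---deriving \eqref{eq3.6.VZPotnox} from \eqref{condSpan} and \eqref{intcondSpannox} via $\rho(r)\lesssim\int_0^{k_2r}\rho(t)\,dt/t$---is in fact more explicit than the paper's, which simply asserts that (iii) follows from (i) and (ii).
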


\begin{proof}
{\it 1.} By \eqref{eq3.5.w}, \eqref{VZACSnox} and \eqref{eq3.6.VZPotnox} we have
\begin{align*}
 \|I_{\rho} f\|_{{\rm W}\mathcal{M}^{\Psi,\varphi_2}}
 & \lesssim
 \sup_{x\in{\mathbb R}^n, r>0}\varphi_2(r)^{-1} \Psi^{-1}\big(r^{-n}\big) \|f\|_{L^{\Phi}(B(x,2r))} \\
 &\phantom{**}+
 \sup_{x\in{\mathbb R}^n, r>0} \varphi_2(r)^{-1}
  \int_{2k_1r}^{\infty} \|f\|_{L^{\Phi}(B(x,t))} \, \Phi^{-1}\big(t^{-n}\big)\rho(t) \frac{dt}{t} \\
 & \lesssim \sup_{x\in{\mathbb R}^n, r>0} \varphi_1(r)^{-1} \Phi^{-1}\big(r^{-n}\big) \|f\|_{L^{\Phi}(B(x,r))} \\
 &\phantom{**}+
 \sup_{x\in{\mathbb R}^n, r>0} \varphi_2(r)^{-1}
  \int_{2k_1r}^{\infty} \varphi_1(t)^{-1}\rho(t) \frac{dt}{t} \ \|f\|_{\mathcal{M}^{\Phi,\varphi_1}}\\
 & \lesssim
 \|f\|_{\mathcal{M}^{\Phi,\varphi_1}}.
\end{align*}
Simply replace
${\rm W}L^\Psi(B)$ with $L^\Psi(B)$
and
${\rm W}\mathcal{M}^{\Psi,\eta}({\mathbb R}^n)$ with $\mathcal{M}^{\Psi,\eta}({\mathbb R}^n)$ and use \eqref{eq3.5.}, \eqref{VZACSnox} and \eqref{eq3.6.VZPotnox} for the strong estimate.

\medskip

{\it 2.}
We will now prove the second part. Let $B_R=B(0,R)$ and $x\in B_{R/2}$. By Lemma \ref{pwsgenfr} we have
$$\rho^{*}(R/2):=\int_{0}^{R/2}\frac{\rho(t)}{t}dt\leq C I_{\rho} \chi_{B_R}(x).$$
Therefore, by Lemma \ref{charOrlMor} and the doubling property of $\varphi_1$,
\begin{align*}
\rho^{*}(R/2)&\lesssim|B_{R/2}|^{-1}\|I_{\rho} \chi_{B_R}\|_{WL^{1}(B_{R/2})}
\lesssim\varphi_2(R/2)\|I_{\rho} \chi_{B_R}\|_{{\rm W}\mathcal{M}^{1,\varphi_2}} \\
&\lesssim\varphi_2(R/2)\|\chi_{B_0}\|_{\mathcal{M}^{\Phi,\varphi_1}}\lesssim\frac{\varphi_2(R/2)}{\varphi_1(R)}
\lesssim \frac{\varphi_2(R/2)}{\varphi_1(R/2)}.
\end{align*}
Since this is true for every $R>0$, we are done.

\medskip

{\it 3.}
The third statement of the theorem follows from the first and second parts of the theorem.
\end{proof}

\subsection{Adams-type result}

The following theorem was proved in \cite[Theorem 4.6]{DerGulSam}:
\begin{theorem}\label{cor4.4.}
\
\begin{enumerate}
\item
Let $\varphi\in{\mathcal{G}}_{\Phi}$ be almost decreasing.
Then the maximal operator $M$ is bounded
from $\mathcal{M}^{\Phi,\varphi}({\mathbb R}^n)$ to ${\rm W}\mathcal{M}^{\Phi,\varphi}({\mathbb R}^n)$.
\item
Let $\Phi\in \nabla_2$ and $\varphi\in{\mathcal{G}}_{\Phi}$ be almost decreasing.
Then the maximal operator $M$ is bounded
on $\mathcal{M}^{\Phi,\varphi}({\mathbb R}^n)$.
\end{enumerate}
\end{theorem}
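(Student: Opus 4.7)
The plan is to adapt the classical decomposition argument for the Hardy--Littlewood maximal operator on generalized Morrey spaces. The engine of the proof is a local estimate: for every ball $B=B(x,r)$,
\begin{equation*}
\|Mf\|_{{\rm W}L^{\Phi}(B)}
\lesssim
\|f\|_{L^{\Phi}(2B)}
+
\frac{1}{\Phi^{-1}(r^{-n})}
\sup_{t>2r}\Phi^{-1}(t^{-n})\,\|f\|_{L^{\Phi}(B(x,t))},
\end{equation*}
and the same estimate with ${\rm W}L^{\Phi}$ replaced by $L^{\Phi}$ under the additional hypothesis $\Phi\in\nabla_2$.

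To prove the local estimate I would decompose $f=f_1+f_2$ with $f_1=f\chi_{2B}$ and $f_2=f-f_1$. For $f_1$, Theorem~\ref{Maxorl}(i) (respectively Theorem~\ref{Maxorl}(ii) in the strong case) immediately yields $\|Mf_1\|_{{\rm W}L^{\Phi}(B)}\leq\|Mf_1\|_{{\rm W}L^{\Phi}}\lesssim\|f\|_{L^{\Phi}(2B)}$. For $f_2$, I note that if $y\in B$ and $B(y,s)$ meets $\operatorname{supp} f_2\subset{\mathbb R}^n\setminus 2B$, then the triangle inequality forces $s>r$ and $B(y,s)\subset B(x,2s)$; hence Lemma~\ref{lemHold} gives
\begin{equation*}
Mf_2(y)\lesssim\sup_{s>r}\frac{1}{|B(x,2s)|}\int_{B(x,2s)}|f(z)|\,dz\lesssim\sup_{t>2r}\Phi^{-1}(t^{-n})\,\|f\|_{L^{\Phi}(B(x,t))}.
\end{equation*}
Multiplying by $\|\chi_B\|_{{\rm W}L^{\Phi}}=1/\Phi^{-1}(r^{-n})$ via Lemma~\ref{charorlc} produces the tail term.

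Now the global conclusion follows by invoking the definition of $\mathcal{M}^{\Phi,\varphi}$: the condition $\varphi\in\mathcal{G}_{\Phi}$ gives $\Phi^{-1}(t^{-n})\,\|f\|_{L^{\Phi}(B(x,t))}\leq\varphi(t)\,\|f\|_{\mathcal{M}^{\Phi,\varphi}}$ for every $t>0$, so the tail integral becomes
\begin{equation*}
\frac{1}{\varphi(r)}\sup_{t>2r}\varphi(t)\,\|f\|_{\mathcal{M}^{\Phi,\varphi}}.
\end{equation*}
Because $\varphi$ is almost decreasing, $\sup_{t>2r}\varphi(t)\lesssim\varphi(2r)$; combined with the doubling of $\varphi$ (which is automatic from $\varphi\in\mathcal{G}_{\Phi}$), this gives $\sup_{t>2r}\varphi(t)\lesssim\varphi(r)$. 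Similarly the first term $\|f\|_{L^{\Phi}(2B)}$ is bounded by $\varphi(2r)/\Phi^{-1}((2r)^{-n})\cdot\|f\|_{\mathcal{M}^{\Phi,\varphi}}$, which after applying the doubling of $\Phi^{-1}$ and of $\varphi$ is controlled by $\varphi(r)/\Phi^{-1}(r^{-n})\cdot\|f\|_{\mathcal{M}^{\Phi,\varphi}}$. Multiplying through by $\Phi^{-1}(r^{-n})/\varphi(r)$ and taking the supremum over $x\in{\mathbb R}^n$ and $r>0$ yields Part~(1), and verbatim the same scheme with Theorem~\ref{Maxorl}(ii) and $L^{\Phi}$-norms in place of ${\rm W}L^{\Phi}$-norms yields Part~(2).

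The main technical point is the tail estimate for $Mf_2$: one must carefully verify that only scales $s>r$ contribute and that the enclosing ball can be taken with bounded eccentricity ($B(y,s)\subset B(x,2s)$), so that Lemma~\ref{lemHold} transfers from $L^1$ averages to $L^{\Phi}$ averages with the correct $\Phi^{-1}(t^{-n})$ weight. Once that tail bound is in place, the almost-decreasing hypothesis on $\varphi$ does the rest; without it, the supremum $\sup_{t>2r}\varphi(t)$ need not collapse to $\varphi(r)$ and the argument breaks down.
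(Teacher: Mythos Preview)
Your argument is correct. The paper does not actually prove this theorem; it is quoted verbatim from \cite[Theorem 4.6]{DerGulSam} with no proof supplied here. Your decomposition $f=f_1+f_2$ with $f_1=f\chi_{2B}$, the use of Theorem~\ref{Maxorl} for the local piece, the pointwise tail bound for $Mf_2$ via Lemma~\ref{lemHold}, and the collapse of $\sup_{t>2r}\varphi(t)$ using the almost-decreasing and doubling properties of $\varphi$ are exactly the ingredients of the standard proof, and in fact your argument is the special case $\rho\equiv1$, $\Psi=\Phi$ of the paper's own Lemma~\ref{lem3.3.Pot2Fr} and Theorem~\ref{SpGulGenRszOrlMorNecFr}. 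One cosmetic remark: the inequality $\Phi^{-1}(t^{-n})\|f\|_{L^{\Phi}(B(x,t))}\le\varphi(t)\|f\|_{\mathcal{M}^{\Phi,\varphi}}$ follows directly from the definition of the $\mathcal{M}^{\Phi,\varphi}$-norm rather than from the hypothesis $\varphi\in\mathcal{G}_{\Phi}$; the latter is only used to guarantee the doubling of $\varphi$.
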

The following theorem gives necessary and sufficient conditions for Adams-type
boundedness of the operator $I_{\rho}$
from $\mathcal{M}^{\Phi,\varphi}({\mathbb R}^n)$ to $\mathcal{M}^{\Psi,\eta}({\mathbb R}^n)$:

\begin{theorem}[Adams-type result]\label{AdGulGenRszOrlMorNecG}
Let $\Phi$ be a Yougn function, and let $\varphi\in{\mathcal{G}}_{\Phi}$ be almost decreasing.
Let $\beta\in(0,1)$ and define $\eta(t)\equiv\varphi(t)^{\beta}$ for $t>0$
and $\Psi(t)\equiv\Phi(t^{1/\beta})$ for $t>0$.
\begin{enumerate}
\item
Let $\rho$ satisfy \eqref{int rho} and \eqref{sup rho}.
Then the condition
\begin{equation}\label{eq3.6.V}
\varphi(r)\int_{0}^{r}\frac{\rho(t)}{t}dt + \int_{r}^{\infty}\rho(t) \, \varphi(t) \frac{dt}{t} \le C
\eta(r),
\end{equation}
for all $r>0$, where $C>0$ does not depend on $r$, is sufficient for the boundedness of $I_{\rho}$ from $\mathcal{M}^{\Phi,\varphi}({\mathbb R}^n)$ to ${\rm W}\mathcal{M}^{\Psi,\eta}({\mathbb R}^n)$.
Moreover, if $\Phi\in\nabla_2$,
then the condition \eqref{eq3.6.V} is sufficient for the boundedness of $I_{\rho}$
from $\mathcal{M}^{\Phi,\varphi}({\mathbb R}^n)$ to $\mathcal{M}^{\Psi,\eta}({\mathbb R}^n)$.

\item
The condition
\begin{equation}\label{condAdams}
\varphi(r)\int_{0}^{r}\frac{\rho(t)}{t}dt\le C \eta(r),
\end{equation}
for all $r>0$, where $C>0$ does not depend on $r$, is necessary
for the boundedness of $I_{\rho}$ from $\mathcal{M}^{\Phi,\varphi}({\mathbb R}^n)$ 
to ${\rm W}\mathcal{M}^{1,\eta}({\mathbb R}^n)$
and hence
for the boundedness of $I_{\rho}$ from $\mathcal{M}^{\Phi,\varphi}({\mathbb R}^n)$ 
to $\mathcal{M}^{\Psi,\eta}({\mathbb R}^n)$.
\item
Let $\rho$ satisfy \eqref{int rho} and \eqref{sup rho}.
Assume that $\varphi$ satisfies the condition
\begin{equation}\label{intcondAdamsxc}
\int_{r}^{\infty}\rho(t) \, \varphi(t) \frac{dt}{t} \le C \rho(r)\varphi(r),
\end{equation}
for all $r>0$, where $C>0$ does not depend on $r$.
Then the condition \eqref{condAdams}
is necessary and sufficient for the boundedness of $I_{\rho}$
 from $\mathcal{M}^{\Phi,\varphi}({\mathbb R}^n)$ to ${\rm W}\mathcal{M}^{\Psi,\eta}({\mathbb R}^n)$.
Moreover, if $\Phi\in\nabla_2$,
then the condition \eqref{condAdams}
is necessary and sufficient for the boundedness of $I_{\rho}$
from $\mathcal{M}^{\Phi,\varphi}({\mathbb R}^n)$ to $\mathcal{M}^{\Psi,\eta}({\mathbb R}^n)$.
\end{enumerate}
\end{theorem}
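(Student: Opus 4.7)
The plan is to prove Parts 1--3 as follows. For Part 1 (sufficiency), I will establish a Hedberg--Adams pointwise bound
\[
I_\rho f(x) \lesssim (Mf(x))^{\beta}\,\|f\|_{\mathcal{M}^{\Phi,\varphi}}^{1-\beta}
\]
and then invoke the boundedness of $M$ on $\mathcal{M}^{\Phi,\varphi}(\mathbb{R}^n)$ furnished by Theorem~\ref{cor4.4.}. Part 2 (necessity) will be obtained by testing on characteristic functions of balls, following the pattern of Theorem~\ref{AdGulGenRszOrlMorNec}(2). Part 3 will be deduced from Parts 1 and 2 by showing that \eqref{condAdams} and \eqref{intcondAdamsxc} together force \eqref{eq3.6.V}.

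To establish the pointwise estimate, dyadically decompose $I_\rho f(x)$ and split at a scale $R>0$. The near part is controlled by $\int_0^{k_2 R}(\rho(s)/s)\,ds\cdot Mf(x)$ via Proposition~\ref{prop:180314-1}, exactly as in the proof of Proposition~\ref{propoverlap}. For the far part, Lemma~\ref{lemHold} together with the definition of the Orlicz--Morrey norm yields
\[
\frac{1}{(2^j r)^n}\int_{B(x,2^j r)}|f(y)|\,dy
\lesssim \varphi(2^j r)\,\|f\|_{\mathcal{M}^{\Phi,\varphi}},
\]
and summing via Proposition~\ref{prop:180314-1} applied to $\tau(u):=\varphi(u^{-1/n})$ (doubling because $\varphi\in\mathcal{G}_\Phi$ is doubling) produces $\int_{k_1 R}^{\infty}\rho(s)\varphi(s)\,(ds/s)\cdot\|f\|_{\mathcal{M}^{\Phi,\varphi}}$. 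Hypothesis \eqref{eq3.6.V} then reduces these to $\varphi(R)^{\beta-1}Mf(x)$ and $\varphi(R)^{\beta}\|f\|_{\mathcal{M}^{\Phi,\varphi}}$; optimizing by choosing $\varphi(R)\sim Mf(x)/\|f\|_{\mathcal{M}^{\Phi,\varphi}}$ (with the boundary cases treated as in Proposition~\ref{propoverlap}) gives the displayed pointwise estimate. From this bound, the identities $\|g\|_{L^\Psi(B)}=\|g^{1/\beta}\|_{L^\Phi(B)}^{\beta}$ and $\|g\|_{WL^\Psi(B)}=\|g^{1/\beta}\|_{WL^\Phi(B)}^{\beta}$ (valid because $\Psi(t)=\Phi(t^{1/\beta})$ forces $\Psi^{-1}(u)=\Phi^{-1}(u)^{\beta}$), combined with Theorem~\ref{cor4.4.}, yield
\[
\Psi^{-1}(r^{-n})\,\|I_\rho f\|_{WL^\Psi(B(x,r))}\lesssim \eta(r)\,\|f\|_{\mathcal{M}^{\Phi,\varphi}},
\]
and the strong-type analog when $\Phi\in\nabla_2$.

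For Part 2, apply the assumed boundedness to $f=\chi_{B(0,R)}$: Lemma~\ref{pwsgenfr} provides the lower bound $I_\rho\chi_{B(0,R)}(x)\gtrsim \int_0^{R/2}\rho(t)/t\,dt$ for $x\in B(0,R/2)$, and combining this with Lemma~\ref{charorlc}, Lemma~\ref{charOrlMor}, and the doubling of $\varphi,\eta$ produces $\varphi(R/2)\int_0^{R/2}\rho(t)/t\,dt\lesssim\eta(R/2)$, which is \eqref{condAdams}. For Part 3, the first summand of \eqref{eq3.6.V} is dominated by \eqref{condAdams} directly; for the second, \eqref{intcondAdamsxc} gives $\int_r^{\infty}\rho(t)\varphi(t)/t\,dt\lesssim\rho(r)\varphi(r)$, while \eqref{sup rho} forces $\rho(r)\lesssim\int_0^r\rho(t)/t\,dt$, so this piece is in turn dominated by $\varphi(r)\int_0^r\rho(t)/t\,dt\lesssim\eta(r)$ via \eqref{condAdams}.

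The main obstacle is the Hedberg--Adams pointwise estimate itself: because $\varphi$ is a general function rather than a power, the optimization over $R$ requires careful handling of the degenerate cases in which $Mf(x)/\|f\|_{\mathcal{M}^{\Phi,\varphi}}$ lies outside the range of $\varphi$ (including the extremes where $\Phi^{-1}(0)>0$ or the supremum is attained in the limit), in the spirit of the case analyses carried out in Proposition~\ref{propoverlap} and Lemma~\ref{lem:180314-5}.
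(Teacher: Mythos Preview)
Your plan matches the paper's proof essentially step for step: the paper also establishes the Hedberg--Adams pointwise bound $I_\rho f(x)\lesssim (Mf(x))^\beta\|f\|_{\mathcal{M}^{\Phi,\varphi}}^{1-\beta}$ via the dyadic split and Proposition~\ref{prop:180314-1} (with $\tau=\varphi$), isolates the scaling law $\||g|^\beta\|_{L^\Psi(B)}=\|g\|_{L^\Phi(B)}^\beta$ as a separate lemma, and then applies Theorem~\ref{cor4.4.}; Part~2 is the same characteristic-function test, and Part~3 is simply declared to follow from Parts~1 and~2.

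The only notable difference is in the optimization after obtaining $I_\rho f(x)\lesssim \varphi(r)^{\beta-1}Mf(x)+\varphi(r)^\beta\|f\|_{\mathcal{M}^{\Phi,\varphi}}$. Rather than solving $\varphi(R)\sim Mf(x)/\|f\|_{\mathcal{M}^{\Phi,\varphi}}$ and treating boundary cases, the paper invokes the trick from \cite[p.~6492]{SawSugTan2}: pass to $\sup_{s>0}\min\{s^{\beta-1}Mf(x),\,s^\beta\|f\|_{\mathcal{M}^{\Phi,\varphi}}\}=(Mf(x))^\beta\|f\|_{\mathcal{M}^{\Phi,\varphi}}^{1-\beta}$, which sidesteps any question of whether the balance point lies in the range of $\varphi$. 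Your direct optimization works too, but note that the case analysis in Proposition~\ref{propoverlap} concerns $\Phi^{-1}(0)>0$, not the range of $\varphi$; the cases you would actually need here are when $Mf(x)/\|f\|_{\mathcal{M}^{\Phi,\varphi}}$ falls outside that range. One small point in your Part~3 argument: \eqref{sup rho} gives $\rho(r)\lesssim\int_{k_1r}^{k_2r}\rho(t)/t\,dt$, which is not literally $\le\int_0^r\rho(t)/t\,dt$ when $k_2>1$; the clean fix is to apply \eqref{condAdams} at $k_2r$ and use the doubling of $\varphi$.
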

We notice that
the function $\varphi$ and $\eta$
come into play, unlike Spanne-type.
Similar to Lemma \ref{lem:180314-5},
we have the following pointwise estimate:

\begin{lemma}
Let $\Phi$ be a Young function, $\varphi\in{\mathcal{G}}_{\Phi}$, $\beta\in(0,1)$,
$\eta(t)\equiv\varphi(t)^{\beta}$ and $\Psi(t)\equiv\Phi(t^{1/\beta})$.
If \eqref{eq3.6.V} holds,
then there exists a positive constant $C$ such that,
for all non-negative measurable functions $f$
and for every $x\in {\mathbb R}^n$,
\begin{equation}\label{poiwseest}
I_{\rho} f(x) \le C(Mf(x))^{\beta} \, \|f\|_{\mathcal{M}^{\Phi,\varphi}}^{1-\beta}.
\end{equation}
\end{lemma}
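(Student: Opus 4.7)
My plan is to prove the pointwise Hedberg-type estimate \eqref{poiwseest} by the same dyadic split as in Proposition~\ref{propoverlap}, but with the Morrey norm $\|f\|_{\mathcal{M}^{\Phi,\varphi}}$ taking the place of $\|f\|_{L^\Phi}$ in the tail of $I_\rho f(x)$. Fix $x\in{\mathbb R}^n$ and assume $0<Mf(x)<\infty$ and $0<\|f\|_{\mathcal{M}^{\Phi,\varphi}}<\infty$ (the edge cases being trivial or vacuous). For a parameter $r>0$ to be chosen at the end, I split $I_\rho f(x)=\sum_{j\le 0}+\sum_{j\ge 1}$ over dyadic annuli centred at $x$ with inner radius $2^{j-1}r$ and outer radius $2^{j}r$.

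For the near part, \eqref{sup rho} and the definition of $Mf(x)$ give the dyadic bound $\tilde\rho(2^j r)\,Mf(x)$ on the $j$-th annular integral; summing via \eqref{eq:180314-9} of Proposition~\ref{prop:180314-1} yields
\[
\sum_{j\le 0}\int_{2^{j-1}r\le|x-y|<2^{j}r}\frac{\rho(|x-y|)}{|x-y|^n}|f(y)|\,dy\;\lesssim\;Mf(x)\int_{0}^{k_2 r}\frac{\rho(t)}{t}\,dt.
\]
For the far part, Lemma~\ref{lemHold} applied on $B(x,2^{j}r)$ together with the definition of the Morrey norm gives the dyadic bound $\tilde\rho(2^{j}r)\,\varphi(2^{j}r)\,\|f\|_{\mathcal{M}^{\Phi,\varphi}}$; summing via \eqref{eq:180314-10} with $\tau(u):=\varphi(u^{-1/n})$ (which is doubling because $\varphi\in\mathcal{G}_\Phi$ is doubling) yields
\[
\sum_{j\ge 1}\int_{2^{j-1}r\le|x-y|<2^{j}r}\frac{\rho(|x-y|)}{|x-y|^n}|f(y)|\,dy\;\lesssim\;\|f\|_{\mathcal{M}^{\Phi,\varphi}}\int_{k_1 r}^{\infty}\frac{\rho(t)\varphi(t)}{t}\,dt.
\]

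Invoking hypothesis \eqref{eq3.6.V} (absorbing the constants $k_1,k_2$ by the doubling of $\eta$ and $\varphi$) I obtain
\[
I_\rho f(x)\;\lesssim\;\frac{\eta(r)}{\varphi(r)}Mf(x)+\eta(r)\|f\|_{\mathcal{M}^{\Phi,\varphi}}\;=\;\varphi(r)^{\beta-1}Mf(x)+\varphi(r)^{\beta}\|f\|_{\mathcal{M}^{\Phi,\varphi}}.
\]
I then optimise in $r$ by selecting $r>0$ so that $\varphi(r)\sim Mf(x)/\|f\|_{\mathcal{M}^{\Phi,\varphi}}$; this balances the two summands and delivers the desired bound $(Mf(x))^{\beta}\|f\|_{\mathcal{M}^{\Phi,\varphi}}^{1-\beta}$.

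The main obstacle is this last balancing step: $\varphi$ is only almost decreasing and doubling, so it need not be invertible or continuous. However, doubling ensures that $\varphi$ attains values comparable, up to an absolute constant, to any target in its range, so an admissible $r$ exists whenever $Mf(x)/\|f\|_{\mathcal{M}^{\Phi,\varphi}}$ lies in (a constant thickening of) the range of $\varphi$. The degenerate cases in which the target value lies outside that range are handled by letting $r\to 0$ or $r\to\infty$; in those limits one of the two summands in the bound vanishes and the other already controls $(Mf(x))^{\beta}\|f\|_{\mathcal{M}^{\Phi,\varphi}}^{1-\beta}$ directly, using the trivial inequalities $Mf(x)\le\|f\|_{\mathcal{M}^{\Phi,\varphi}}\cdot(\text{constant})\cdot\varphi(r)$ valid on the appropriate side.
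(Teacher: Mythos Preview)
Your argument is correct and follows essentially the same route as the paper: the dyadic split, the bounds via Proposition~\ref{prop:180314-1} (with $\tau=\varphi$ for the far part), and the application of \eqref{eq3.6.V} to reach $I_\rho f(x)\lesssim \varphi(r)^{\beta-1}Mf(x)+\varphi(r)^{\beta}\|f\|_{\mathcal{M}^{\Phi,\varphi}}$ are identical. The only difference is cosmetic, in the final optimisation: the paper substitutes $s=\varphi(r)$ and bounds by $\sup_{s>0}\min\{s^{\beta-1}Mf(x),\,s^{\beta}\|f\|_{\mathcal{M}^{\Phi,\varphi}}\}=(Mf(x))^{\beta}\|f\|_{\mathcal{M}^{\Phi,\varphi}}^{1-\beta}$ (citing \cite[p.~6492]{SawSugTan2}), which packages the balancing and the edge cases into a single line, whereas you carry out the balancing explicitly and then treat the boundary cases separately.
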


\begin{proof}
Let $\tilde{\rho}$ be defined by (\ref{eq:tilde rho}). We have
\begin{equation*}
I_{\rho} f(x)
\le C\left[\sum_{j=-\infty}^{-1}+\sum_{j=0}^{\infty}\frac{\tilde{\rho}(2^j r)}{(2^j r)^n}\int_{|x-y|<2^{j}r}f(y) dy\right]=C({\rm I}+{\rm II})
\end{equation*}
for given $x\in {\mathbb R}^n$ and $r>0$.
Thus from
(\ref{eq:180314-9}) and (\ref{eq:180314-10})
with $\tau=\varphi$
we deduce
\begin{align*}
{\rm I}&\le C \sum_{j=-\infty}^{-1}\tilde{\rho}(2^j r) Mf(x)\le C \left(\int_{0}^{k_2 r}\frac{\rho(s)}{s}ds\right) Mf(x)\\
{\rm II}&\le C \sum_{j=0}^{\infty}\tilde{\rho}(2^j r)\varphi\big(2^{j}r\big)\|f\|_{\mathcal{M}^{\Phi,\varphi}}\le C \|f\|_{\mathcal{M}^{\Phi,\varphi}} \int_{k_1 r}^{\infty} \varphi\big(s\big) \frac{\rho(s)}{s}ds.
\end{align*}
Consequently we have
\begin{equation*}
\begin{split}
I_\rho f(x) \lesssim \left(\int_{0}^{k_2 r}\frac{\rho(s)}{s}ds\right) Mf(x)+\|f\|_{\mathcal{M}^{\Phi,\varphi}} \int_{k_1 r}^{\infty} \varphi\big(s\big) \frac{\rho(s)}{s}ds.
\end{split}
\end{equation*}
Thus, the technique in \cite[p. 6492]{SawSugTan2} by \eqref{eq3.6.V} and the doubling property of $\varphi$ we obtain
\begin{align*}
I_{\rho} f(x) & \lesssim  \min \big\{ \varphi(r)^{\beta-1} Mf(x), \varphi(r)^{\beta} \|f\|_{\mathcal{M}^{\Phi,\varphi}}\big\}
\\
& \lesssim \sup\limits_{s>0} \min \big\{ s^{\beta-1} Mf(x), s^{\beta} \|f\|_{\mathcal{M}^{\Phi,\varphi}}\big\}
\\
& = (Mf(x))^{\beta} \, \|f\|_{\mathcal{M}^{\Phi,\varphi}}^{1-\beta},
\end{align*}
where we have used that the supremum is achieved when the minimum parts are balanced.
Hence we have
$I_{\rho} f(x) \lesssim (Mf(x))^{\beta} \, \|f\|_{\mathcal{M}^{\Phi,\varphi}}^{1-\beta}$.
\end{proof}

We have the following scaling law:
\begin{lemma}\label{lem:180314-6}
Let $\beta>0$. Let $\Psi$ and $\Phi$ be Yougn functions,
and let $B$ be a ball.
Then $\|\,|f|^{\beta}\|_{L^{\Psi}(B)}= \|f\|_{L^{\Phi}(B)}^{\beta}$
and $\|\,|f|^{\beta}\|_{{\rm W}L^{\Psi}(B)}= \|f\|_{{\rm W}L^{\Phi}(B)}^{\beta}$
for all measurable functions $f$.
\end{lemma}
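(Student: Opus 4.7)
The lemma, as stated, implicitly inherits the relation $\Psi(t)=\Phi(t^{1/\beta})$ from the setting of Theorem~\ref{AdGulGenRszOrlMorNecG}, where it is applied. The plan is to reduce both identities to a single change of variable after first establishing the key inverse identity
\[
\Psi^{-1}(s)=\Phi^{-1}(s)^{\beta}\qquad (s\ge 0).
\]
This identity follows directly from the definition of the generalized inverse of O'Neil: writing $u=r^{1/\beta}$,
\[
\Psi^{-1}(s)=\inf\{r\ge0:\Phi(r^{1/\beta})>s\}=\inf\{u^{\beta}:\Phi(u)>s\}=\Phi^{-1}(s)^{\beta},
\]
where the last equality uses that $u\mapsto u^{\beta}$ is an order-preserving bijection of $[0,\infty)$, so the infimum commutes with the $\beta$-th power.

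For the strong-norm identity, I would unfold the Luxemburg-Nakano definition and substitute $\Psi(t)=\Phi(t^{1/\beta})$ to obtain
\[
\|\,|f|^{\beta}\|_{L^{\Psi}(B)}=\inf\left\{\lambda>0:\int_{B}\Phi\left(\frac{|f(x)|}{\lambda^{1/\beta}}\right)dx\le1\right\}.
\]
The change of variable $\mu=\lambda^{1/\beta}$, again an order-preserving bijection of $(0,\infty)$, converts this infimum into $(\inf\{\mu>0:\int_{B}\Phi(|f|/\mu)\,dx\le1\})^{\beta}=\|f\|_{L^{\Phi}(B)}^{\beta}$.

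For the weak-norm identity, I would start from the defining formula
\[
\|\,|f|^{\beta}\|_{{\rm W}L^{\Psi}(B)}=\sup_{\lambda>0}\lambda\,\|\chi_{\{|f|^{\beta}>\lambda\}}\|_{L^{\Psi}(B)},
\]
apply Lemma~\ref{charorlc} to write $\|\chi_{E}\|_{L^{\Psi}(B)}=1/\Psi^{-1}(|E|^{-1})$ for $E=B\cap\{|f|^{\beta}>\lambda\}$, invoke the key identity from the first paragraph to replace $\Psi^{-1}$ by $(\Phi^{-1})^{\beta}$, and then substitute $\mu=\lambda^{1/\beta}$, noting $\{|f|^{\beta}>\lambda\}=\{|f|>\mu\}$. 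The $\beta$-th power can then be pulled outside the supremum, giving $\|f\|_{{\rm W}L^{\Phi}(B)}^{\beta}$. The only point requiring care is the validity of the inverse identity and of the $\mu$-substitution when $\Phi$ is not bijective (flat pieces or jumps), but since both rely solely on monotonicity of $u\mapsto u^{\beta}$, no regularity beyond the Young-function axioms is needed; this is the mild ``main obstacle,'' and the rest of the argument is a direct computation.
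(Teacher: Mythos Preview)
Your proof is correct and follows essentially the same approach as the paper's: the paper's argument also reduces to the identity $\Psi(|f|^{\beta}/\lambda)=\Phi(|f|/\lambda^{1/\beta})$ and the change of variable $\mu=\lambda^{1/\beta}$ in the Luxemburg infimum, then dispatches the weak case with ``similarly.'' Your version is more explicit (spelling out the inverse identity $\Psi^{-1}=(\Phi^{-1})^{\beta}$ and the weak-norm computation via Lemma~\ref{charorlc}), but the underlying mechanism is identical.
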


\begin{proof}
Simply note that
$$
\int_B \Psi\left(\frac{|f(x)|^{\beta}}{\|f\|_{L^{\Phi}(B)}^{\beta}}\right)dx
=
\int_B \Phi\left(\frac{|f(x)|}{\|f\|_{L^{\Phi}(B)}}\right)dx
$$
for $L^{\Phi}(B)$.
The equality for weak spaces can be proved similarly.
\end{proof}

\begin{proof}[Proof of Theorem \ref{AdGulGenRszOrlMorNecG}]
{\it 1.}
\begin{itemize}
\item
We deal with the weak-type estimate. By using inequality \eqref{poiwseest} we have for an arbitrary ball $B$
$$
\|I_{\rho} f\|_{{\rm W}L^{\Psi}(B)}\lesssim \|(Mf)^{\beta}\|_{{\rm W}L^{\Psi}(B)}\, \|f\|_{\mathcal{M}^{\Phi,\varphi}}^{1-\beta}.
$$
Consequently by using this inequality and Lemma \ref{lem:180314-6}
we have
\begin{equation}\label{poiwseestnorm}
\|I_{\rho} f\|_{{\rm W}L^{\Psi}(B)}\lesssim \|M f\|_{{\rm W}L^{\Phi}(B)}^{\beta}\, \|f\|_{\mathcal{M}^{\Phi,\varphi}}^{1-\beta}.
\end{equation}
From Theorem \ref{cor4.4.} and \eqref{poiwseestnorm}, we get
\begin{align*}
\|I_{\rho}f\|_{{\rm W}\mathcal{M}^{\Psi,\eta}}&=\sup\limits_{B}\eta(r)^{-1}\Psi^{-1}(r^{-n})\|I_{\rho} f\|_{{\rm W}L^{\Psi}(B)}\\
&\lesssim  \|f\|_{\mathcal{M}^{\Phi,\varphi}}^{1-\beta}\, \sup\limits_{B}\eta(r)^{-1}\Psi^{-1}(r^{-n})\|Mf\|_{{\rm W}L^{\Phi}(B)}^{\beta}\\
&=\|f\|_{\mathcal{M}^{\Phi,\varphi}}^{1-\beta}\, \left(\sup\limits_{B}\varphi(r)^{-1}\Phi^{-1}(r^{-n})\|Mf\|_{{\rm W}L^{\Phi}(B)}\right)^{\beta} \\
&\lesssim \|f\|_{\mathcal{M}^{\Phi,\varphi}}.
\end{align*}
\item
Simply replace
${\rm W}L^\Psi(B)$ with $L^\Psi(B)$
and
${\rm W}\mathcal{M}^{\Psi,\eta}({\mathbb R}^n)$ with $\mathcal{M}^{\Psi,\eta}({\mathbb R}^n)$ for the strong estimate.
\end{itemize}

\medskip

{\it 2.}
We will now prove the second part.
Let $B_R=B(0,R)$ and $x\in B_{R/2}$.
By Lemmas \ref{charorlc}, \ref{pwsgenfr} and \ref{charOrlMor} and the doubling property of $\varphi$, we have
\begin{align*}
\rho^{*}(R/2)&\leq C|B_{R/2}|^{-1}\|I_{\rho} \chi_{B_R}\|_{{\rm W}L^{1}(B_{R/2})}
\leq C\eta(R/2)\|I_{\rho} \chi_{B_R}\|_{{\rm W}\mathcal{M}^{1,\eta}}
\\
&\leq C\eta(R/2)\|\chi_{B_R}\|_{\mathcal{M}^{\Phi,\varphi}}\leq C\frac{\eta(R/2)}{\varphi(R)}\leq C\frac{\eta(R/2)}{\varphi(R/2)}= C \varphi(R/2)^{\beta-1}.
\end{align*}
Since this is true for every $R>0$, the proof is complete.

\medskip

{\it 3.}
This part follows from the first and second parts.
\end{proof}

\section{Generalized fractional maximal operators on generalized Orlicz--Morrey spaces}\label{sec:Mr Orl-Mor}

In this section we give a characterization for the Spanne-type boundedness and the Adams-type
boundedness of the operator $M_{\rho}$ on generalized Orlicz--Morrey spaces, respectively.

\subsection{Spanne-type result}

We use the following lemma:
\begin{lemma}\label{lem3.3.Pot2Fr}
Let $\Phi, \Psi$ be Young functions.
Assume that $\rho$ is increasing
and that $r\mapsto r^{-n}\rho(r)$ is decreasing.
Assume also that the condition \eqref{adRieszCharOrl2GFM} is fulfilled.
Then there exists a positive constant $C$ such that,
for all $f\in L^{\Phi}_{\rm loc}({\mathbb R}^n)$ and $B=B(x,r)$,
\begin{align}\label{eq3.5.wFr}
\lefteqn{
\|M_{\rho} f\|_{{\rm W}L^{\Psi}(B)}
}\nonumber\\
&\le C\|f\|_{L^{\Phi}(2B)} +
\frac{C}{\Psi^{-1}\big(r^{-n}\big)}
\sup_{r<t<\infty} \|f\|_{L^{\Phi}(B(x,2t))} \, \Phi^{-1}\big(t^{-n}\big)\rho(t).
\end{align}
Moreover if we assume $\Phi\in\nabla_2$, the following inequality is also valid:
\begin{align}\label{eq3.5.Fr}
\lefteqn{
\|M_{\rho} f\|_{L^{\Psi}(B)}
}\nonumber\\
&\le C\|f\|_{L^{\Phi}(2B)} +
\frac{C}{\Psi^{-1}\big(r^{-n}\big)}
\sup_{r<t<\infty} \|f\|_{L^{\Phi}(B(x,2t))} \, \Phi^{-1}\big(t^{-n}\big)\rho(t).
\end{align}
\end{lemma}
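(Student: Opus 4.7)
The plan is to mirror the proof of Lemma \ref{lem3.3.Pot2}, splitting $f = f_1 + f_2$ with $f_1 = f\chi_{2B}$ and $f_2 = f - f_1$, and estimating $M_\rho f_1$ by the global boundedness of $M_\rho$ and $M_\rho f_2$ by a pointwise bound that is uniform on $B$.

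For the local piece, Theorem \ref{AdamsFrMaxCharOrlGFM} together with assumption \eqref{adRieszCharOrl2GFM} gives the boundedness of $M_\rho$ from $L^{\Phi}({\mathbb R}^n)$ to ${\rm W}L^{\Psi}({\mathbb R}^n)$, hence
$$
 \|M_\rho f_1\|_{{\rm W}L^{\Psi}(B)} \le \|M_\rho f_1\|_{{\rm W}L^{\Psi}} \lesssim \|f_1\|_{L^{\Phi}} = \|f\|_{L^{\Phi}(2B)}.
$$
For the strong-type inequality we use the $\nabla_2$ half of the same theorem in exactly the same way, replacing ${\rm W}L^{\Psi}$ by $L^{\Psi}$.

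The main step is the tail estimate for $M_\rho f_2$. Fix $y \in B = B(x,r)$. If $z \in B(y,t)$ contributes to $M_\rho f_2(y)$ at radius $t$, then $z \notin 2B$, and the triangle inequality forces $t > r$. For such $t$ we also have $B(y,t) \subset B(x,2t)$. Together with Lemma \ref{lemHold} and the doubling of $\Phi^{-1}$ (and the fact that $|B(y,t)| \sim |B(x,2t)| \sim t^n$), this yields
$$
 M_\rho f_2(y) = \sup_{t > r} \frac{\rho(t)}{|B(y,t)|} \int_{B(y,t)} |f_2(z)|\,dz \lesssim \sup_{r<t<\infty} \rho(t)\,\Phi^{-1}(t^{-n})\,\|f\|_{L^{\Phi}(B(x,2t))}.
$$
Since the right-hand side is independent of $y \in B$, multiplying by $\chi_B$ and using Lemma \ref{charorlc} gives
$$
 \|M_\rho f_2\|_{{\rm W}L^{\Psi}(B)} \le \Bigl(\sup_{r<t<\infty} \rho(t)\,\Phi^{-1}(t^{-n})\,\|f\|_{L^{\Phi}(B(x,2t))}\Bigr) \|\chi_B\|_{{\rm W}L^{\Psi}} \lesssim \frac{1}{\Psi^{-1}(r^{-n})} \sup_{r<t<\infty} \rho(t)\,\Phi^{-1}(t^{-n})\,\|f\|_{L^{\Phi}(B(x,2t))}.
$$
The same upper bound holds for $\|M_\rho f_2\|_{L^{\Psi}(B)}$ via $\|\chi_B\|_{L^{\Psi}} = \|\chi_B\|_{{\rm W}L^{\Psi}}$. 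Summing the two contributions yields \eqref{eq3.5.wFr} and, under $\Phi\in\nabla_2$, \eqref{eq3.5.Fr}.

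The only delicate point is the geometric reduction for $M_\rho f_2$: because $M_\rho$ takes a supremum over balls centered at $y$ rather than at $x$, one must verify both the lower bound $t > r$ (so that $B(y,t)$ can meet $(2B)^{\complement}$) and the inclusion $B(y,t) \subset B(x,2t)$ for such $t$, so that $\|f\|_{L^{\Phi}(B(y,t))}$ can legitimately be replaced by $\|f\|_{L^{\Phi}(B(x,2t))}$ before taking the supremum. Once this is settled, the rest is a straightforward application of Lemmas \ref{charorlc} and \ref{lemHold} together with the doubling properties of $\Phi^{-1}$ and $\Psi^{-1}$.
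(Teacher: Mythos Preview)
Your proof is correct and follows essentially the same approach as the paper: the same splitting $f=f_1+f_2$ with $f_1=f\chi_{2B}$, the same use of Theorem~\ref{AdamsFrMaxCharOrlGFM} for $f_1$, the same geometric reduction ($t>r$ and $B(y,t)\subset B(x,2t)$ for $y\in B$) combined with Lemma~\ref{lemHold} to bound $M_\rho f_2$ pointwise, and Lemma~\ref{charorlc} to pass to the norm. The only cosmetic difference is that you state the geometric constraint $t>r$ as a consequence of $B(y,t)$ meeting $(2B)^{\complement}$, whereas the paper simply restricts the supremum to $t>r$ directly; the content is identical.
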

\begin{proof}
We represent  $f$ as
\begin{equation*}
 f=f_1+f_2, \ \quad f_1=f\chi _{2B},\quad
 f_2=f-f_1, \ \quad r>0.
\end{equation*}
Then we have
$$
\|M_\rho f\|_{WL^{\Psi}(B)} \lesssim \|M_\rho f_1\|_{WL^{\Psi}(B)} +\|M_\rho f_2\|_{WL^{\Psi}(B)}.
$$

From the boundedness of $M_\rho$ from $L^{\Phi}({\mathbb R}^n)$ to $WL^{\Psi}({\mathbb R}^n)$
(see Theorem \ref{AdamsFrMaxCharOrlGFM}) it follows that:
\begin{equation}\label{ghjklyuFr}
\|M_\rho f_1\|_{WL^{\Psi}(B)}\leq \|M_\rho f_1\|_{WL^{\Psi}({\mathbb R}^n)}\leq
C\|f_1\|_{L^{\Phi}({\mathbb R}^n)}=C\|f\|_{L^{\Phi}(2B)},
\end{equation}
where constant $C >0$ is independent of $f$.

If $y\in B$ and $r<t$, then $B(y,t)\subset B(x,2t)$.
Then, using Lemma~\ref{lemHold}, we have
\begin{align} \label{sal00Pot2Fr}
M_{\rho} f_2(y) & = \sup_{t>0}\frac{\rho(t)}{|B(y,t)|} \int_{B(y,t)\setminus (B(x,2r))}|f(z)|d z \notag
\\
& \le \, \sup_{t>r}\frac{\rho(t)}{|B(x,t)|} \int_{B(x,2t)}|f(z)|d z \notag
\\
&\lesssim \sup_{r<t<\infty} \rho(t) \, \Phi^{-1}(t^{-n}) \, \|f\|_{L^{\Phi}(B(x,2t))}
\quad\text{for $y\in B$}.
\end{align}

Thus by Lemma \ref{charorlc} we have
\begin{equation}\label{asdertgFr}
\|M_{\rho} f_2\|_{WL^{\Psi}(B)} \lesssim  \frac{1}{\Psi^{-1}\big(r^{-n}\big)} \sup_{r<t<\infty} \rho(t) \, \Phi^{-1}(t^{-n}) \, \|f\|_{L^{\Phi}(B(x,2t))}.
\end{equation}
Therefore we obtain \eqref{eq3.5.wFr} by \eqref{ghjklyuFr} and \eqref{asdertgFr}.

If $\Phi\in\nabla_2$, then we can use strong type inequality instead of \eqref{ghjklyuFr}
and obtain \eqref{eq3.5.Fr} by using the same argument.
\end{proof}


The following theorem gives a necessary and sufficient condition for Spanne-type
boundedness of the operator $M_{\rho}$
from $\mathcal{M}^{\Phi,\varphi_1}({\mathbb R}^n)$ to $\mathcal{M}^{\Psi,\varphi_2}({\mathbb R}^n)$:
We notice that the requirement is the same as the Orlicz spaces.

\begin{theorem}[Spanne-type result]\label{SpGulGenRszOrlMorNecFr}
Let $\Phi, \Psi$ be Young functions,
and let $\varphi_1\in{\mathcal{G}}_{\Phi}$ and $\varphi_2\in{\mathcal{G}}_{\Psi}$.
\begin{enumerate}
\item
Assume that $\rho$ is increasing
and that $r\mapsto r^{-n}\rho(r)$ is decreasing.
Assume also that the conditions \eqref{adRieszCharOrl2GFM} and \eqref{VZACSnox} are satisfied.
Then the condition
\begin{equation}\label{eq3.6.VZPotnoxFr}
\sup_{r<t<\infty}\varphi_1(t) \rho(t) \le  C \,\varphi_2(r),
\end{equation}
for all $r>0$, where $C>0$ does not depend on $r$, are sufficient for the boundedness of $M_{\rho}$ from $\mathcal{M}^{\Phi,\varphi_1}({\mathbb R}^n)$ to ${\rm W}\mathcal{M}^{\Psi,\varphi_2}({\mathbb R}^n)$.
Moreover, if $\Phi\in\nabla_2$,
then the condition \eqref{eq3.6.VZPotnoxFr} is sufficient for the boundedness of $M_{\rho}$
from $\mathcal{M}^{\Phi,\varphi_1}({\mathbb R}^n)$ to $\mathcal{M}^{\Psi,\varphi_2}({\mathbb R}^n)$.

\item
Let $\varphi_1$ be almost decreasing.
Then the condition
\begin{equation}\label{condSpanFr}
\varphi_1(r) \rho(r)\le C \varphi_2(r),
\end{equation}
for all $r>0$, where $C>0$ does not depend on $r$, is necessary for the boundedness of $M_{\rho}$ from $\mathcal{M}^{\Phi,\varphi_1}({\mathbb R}^n)$ to ${\rm W}\mathcal{M}^{1,\varphi_2}({\mathbb R}^n)$ and hence $\mathcal{M}^{\Phi,\varphi_1}({\mathbb R}^n)$ to $\mathcal{M}^{\Psi,\varphi_2}({\mathbb R}^n)$.

\item
Assume that $\rho$ is increasing
and that $r\mapsto r^{-n}\rho(r)$ is decreasing.
Assume also that the conditions \eqref{adRieszCharOrl2GFM} and \eqref{VZACSnox} are satisfied.
Let $\varphi_1$ and $\varphi_2$ be almost decreasing.
Then the condition \eqref{condSpanFr}
is necessary and sufficient for the boundedness of $M_{\rho}$
from $\mathcal{M}^{\Phi,\varphi_1}({\mathbb R}^n)$ to ${\rm W}\mathcal{M}^{\Psi,\varphi_2}({\mathbb R}^n)$.
Moreover, if $\Phi\in\nabla_2$,
then the condition \eqref{condSpanFr}
is necessary and sufficient for the boundedness of $M_{\rho}$
from $\mathcal{M}^{\Phi,\varphi_1}({\mathbb R}^n)$ to $\mathcal{M}^{\Psi,\varphi_2}({\mathbb R}^n)$.
\end{enumerate}
\end{theorem}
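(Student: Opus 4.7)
My plan is to mimic the structure of the proof of Theorem~\ref{SpGulGenRszOrlMorNec}, replacing the Hedberg-type integral bound by the supremum bound of Lemma~\ref{lem3.3.Pot2Fr} and using the pointwise estimate \eqref{trpwesfr} rather than Lemma~\ref{pwsgenfr}. Part~3 will be deduced from Parts~1 and 2.

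For Part~1 (sufficiency), I would start from the localized estimate \eqref{eq3.5.wFr} applied to an arbitrary ball $B=B(x,r)$, multiply by $\varphi_2(r)^{-1}\Psi^{-1}(r^{-n})$, and take the supremum over $x\in{\mathbb R}^n$ and $r>0$. For the first summand, the definition of the Morrey norm together with the doubling of $\varphi_1$ and $\Phi^{-1}$ gives $\Phi^{-1}((2r)^{-n})\|f\|_{L^\Phi(2B)}\lesssim\varphi_1(r)\|f\|_{\mathcal{M}^{\Phi,\varphi_1}}$, and then \eqref{VZACSnox} converts $\varphi_1(r)/\Phi^{-1}(r^{-n})$ into $\varphi_2(r)/\Psi^{-1}(r^{-n})$, so the contribution is $\lesssim\|f\|_{\mathcal{M}^{\Phi,\varphi_1}}$. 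For the second summand, the same Morrey-norm estimate applied inside the supremum yields
\[
\frac{1}{\varphi_2(r)}\sup_{r<t<\infty}\varphi_1(2t)\rho(t)\,\|f\|_{\mathcal{M}^{\Phi,\varphi_1}},
\]
and the doubling of $\varphi_1$ together with \eqref{eq3.6.VZPotnoxFr} absorbs the supremum into a multiple of $\varphi_2(r)$, giving the desired bound $\lesssim\|f\|_{\mathcal{M}^{\Phi,\varphi_1}}$. Under $\Phi\in\nabla_2$ the strong estimate follows identically after replacing \eqref{eq3.5.wFr} by \eqref{eq3.5.Fr}.

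For Part~2 (necessity), I would use the trivial pointwise estimate \eqref{trpwesfr}, namely $\rho(r)\chi_{B(0,r)}\le M_\rho\chi_{B(0,2r)}$. Taking the weak $\mathcal{M}^{1,\varphi_2}$ norm of both sides, the left-hand side is bounded below (evaluate the supremum defining the norm at the ball $B(0,r)$) by $\rho(r)/\varphi_2(r)$, while the assumed boundedness of $M_\rho$ together with Lemma~\ref{charOrlMor} (using that $\varphi_1$ is almost decreasing) and the doubling of $\varphi_1$ bounds the right-hand side by $C/\varphi_1(r)$. Rearranging yields \eqref{condSpanFr}.

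For Part~3, the sufficiency direction requires verifying that \eqref{condSpanFr} plus the assumptions imply \eqref{eq3.6.VZPotnoxFr}. Using that $\varphi_2$ is almost decreasing, for each $t>r$ one has $\varphi_1(t)\rho(t)\le C\varphi_2(t)\le C'\varphi_2(r)$, so taking the supremum over $t>r$ gives \eqref{eq3.6.VZPotnoxFr}; then Part~1 applies. The converse direction is Part~2. I expect the main technical obstacle to be the bookkeeping of the doubling constants and the exact form of the almost monotonicity when transporting the supremum through the scaling $t\mapsto 2t$ inside \eqref{eq3.5.wFr} and \eqref{eq3.5.Fr}; once \eqref{VZACSnox} and the defining properties of $\mathcal{G}_\Phi$ are invoked consistently, the estimates reduce to routine manipulations analogous to those in the proof of Theorem~\ref{SpGulGenRszOrlMorNec}.
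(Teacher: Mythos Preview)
Your proposal is correct and follows essentially the same approach as the paper's proof: Part~1 is obtained from Lemma~\ref{lem3.3.Pot2Fr} together with \eqref{VZACSnox}, \eqref{eq3.6.VZPotnoxFr} and the doubling of $\varphi_1,\Phi^{-1}$; Part~2 uses \eqref{trpwesfr} and Lemma~\ref{charOrlMor} exactly as you describe; and Part~3 reduces to observing that the almost decrease of $\varphi_2$ makes \eqref{condSpanFr} and \eqref{eq3.6.VZPotnoxFr} equivalent. The bookkeeping issue you flag (the $t\mapsto2t$ shift in the supremum) is indeed handled by the doubling of $\varphi_1$, which comes for free from $\varphi_1\in\mathcal{G}_\Phi$.
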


\begin{proof}
{\it 1.}
By \eqref{VZACSnox}, \eqref{eq3.5.wFr}, \eqref{eq3.6.VZPotnoxFr}
and the doubling properties of $\varphi_1$ and $\Phi^{-1}$
we have
\begin{align*}
 \|M_{\rho} f\|_{{\rm W}\mathcal{M}^{\Psi,\varphi_2}}
 &\lesssim
 \sup_{x\in{\mathbb R}^n, r>0}\varphi_2(r)^{-1} \Psi^{-1}\big(r^{-n}\big) \|f\|_{L^{\Phi}(B(x,2r))} \\
 &\phantom{**}+
 \sup_{x\in{\mathbb R}^n, r>0} \varphi_2(r)^{-1}
  \sup_{r<t<\infty} \|f\|_{L^{\Phi}(B(x,2t))} \, \Phi^{-1}\big(t^{-n}\big)\rho(t) \\
 &\lesssim
 \sup_{x\in{\mathbb R}^n, r>0} \varphi_1(r)^{-1} \Phi^{-1}\big(r^{-n}\big) \|f\|_{L^{\Phi}(B(x,r))} \\
 &\phantom{**}+
 \sup_{x\in{\mathbb R}^n, r>0} \varphi_2(r)^{-1}
  \sup_{r<t<\infty} \varphi_1(t)\rho(t) \|f\|_{\mathcal{M}^{\Phi,\varphi_1}} \\
 &\lesssim
 \|f\|_{\mathcal{M}^{\Phi,\varphi_1}}.
\end{align*}
Simply replace
${\rm W}L^\Psi(B)$ with $L^\Psi(B)$
and
${\rm W}\mathcal{M}^{\Psi,\eta}({\mathbb R}^n)$
with
$\mathcal{M}^{\Psi,\eta}({\mathbb R}^n)$ for the strong estimate.

\medskip

{\it 2.}
We will now prove the second part.
We utilize \eqref{trpwesfr}.
By Lemma \ref{charOrlMor}, we have
\begin{align*}
\rho(r)&\lesssim |B(0,r)|^{-1}\|M_{\rho} \chi_{B(0,2r)}\|_{WL^{1}(B(0,r))} \lesssim \varphi_2(r)\|M_{\rho} \chi_{B(0,2r)}\|_{{\rm W}\mathcal{M}^{1,\varphi_{2}}}
\\
&\lesssim \varphi_2(r)\|\chi_{B(0,2r)}\|_{\mathcal{M}^{\Phi,\varphi_{1}}}\lesssim  \frac{\varphi_2(r)}{\varphi_1(r)}.
\end{align*}

\medskip

{\it 3.}
Since $\varphi_2$ is almost decreasing, \eqref{eq3.6.VZPotnoxFr} and \eqref{condSpanFr} are equaivalent.
Then the third statement of the theorem follows from the first and second parts of the theorem.
\end{proof}

\subsection{Adams-type result}
The following theorem gives necessary and sufficient conditions for Adams-type
boundedness of the operator $M_{\rho}$
from $\mathcal{M}^{\Phi,\varphi}({\mathbb R}^n)$ to $\mathcal{M}^{\Psi,\eta}({\mathbb R}^n)$.

Here we suppose that
$\rho$ is an increasing function such that
$r \in (0,\infty) \mapsto r^{-n}\rho(r) \in (0,\infty)$
is decreasing.

\begin{theorem}\label{thm:180314-11}
Let $\Phi$ be a Young function, and let $\varphi\in{\mathcal{G}}_{\Phi}$ be almost decreasing.
Assume that $\rho$ is increasing
and that $r\mapsto r^{-n}\rho(r)$ is decreasing.
Let $\beta\in(0,1)$, $\eta(t)\equiv\varphi(t)^{\beta}$ and $\Psi(t)\equiv\Phi(t^{1/\beta})$.
Then the condition
\begin{equation}\label{condAdamsfr}
\rho(t)\lesssim \varphi(t)^{\beta-1},
\end{equation}
is necessary and sufficient for the boundedness of $M_{\rho}$
from $\mathcal{M}^{\Phi,\varphi}({\mathbb R}^n)$ to ${\rm W}\mathcal{M}^{\Psi,\eta}({\mathbb R}^n)$.
Moreover, if $\Phi\in\nabla_2$, then the condition~\eqref{condAdamsfr}
is necessary and sufficient for the boundedness of $M_{\rho}$
from $\mathcal{M}^{\Phi,\varphi}({\mathbb R}^n)$ to $\mathcal{M}^{\Psi,\eta}({\mathbb R}^n)$.
\end{theorem}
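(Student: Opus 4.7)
My plan is to mirror the proof of Theorem \ref{AdGulGenRszOrlMorNecG}, reducing the Adams-type boundedness of $M_\rho$ to a pointwise Hedberg-type inequality combined with the Spanne-type boundedness of $M$ on $\mathcal{M}^{\Phi,\varphi}({\mathbb R}^n)$ (Theorem \ref{cor4.4.}), the scaling identity of Lemma \ref{lem:180314-6}, and the balancing $\Psi(t)=\Phi(t^{1/\beta})$, $\eta(t)=\varphi(t)^\beta$. The necessity will come from testing against characteristic functions via \eqref{trpwesfr}.

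For the sufficiency, the first step is to establish the pointwise estimate
\begin{equation*}
 M_{\rho}f(x) \lesssim (Mf(x))^{\beta}\,\|f\|_{\mathcal{M}^{\Phi,\varphi}}^{1-\beta}, \qquad x\in{\mathbb R}^n,
\end{equation*}
for every nonzero $f\in\mathcal{M}^{\Phi,\varphi}({\mathbb R}^n)$, under the hypothesis \eqref{condAdamsfr}. Fix $r>0$ and split the defining supremum in $M_\rho f(x)$ at the threshold radius $r$. As in Lemma \ref{swkshr}, the low-radius part is $\lesssim \rho(r)\,Mf(x)$ by the monotonicity and doubling of $\rho$. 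For the tail $R\ge r$, Lemma \ref{lemHold} yields
\begin{equation*}
 \frac{\rho(R)}{|B(x,R)|}\int_{B(x,R)}|f(y)|\,dy \lesssim \rho(R)\,\Phi^{-1}(R^{-n})\,\|f\|_{L^{\Phi}(B(x,R))} \le \rho(R)\,\varphi(R)\,\|f\|_{\mathcal{M}^{\Phi,\varphi}},
\end{equation*}
and \eqref{condAdamsfr} combined with $\beta\in(0,1)$ and the almost-decreasing property of $\varphi$ gives $\rho(R)\varphi(R)\lesssim\varphi(R)^\beta\lesssim\varphi(r)^\beta$ for all $R\ge r$. Applying \eqref{condAdamsfr} again to absorb $\rho(r)$ on the low-radius term,
\begin{equation*}
 M_\rho f(x) \lesssim \varphi(r)^{\beta-1}\,Mf(x) + \varphi(r)^\beta\,\|f\|_{\mathcal{M}^{\Phi,\varphi}},
\end{equation*}
and optimizing in $r$ (via the identity $\sup_{s>0}\min\{s^{\beta-1}A,\,s^\beta B\}=A^\beta B^{1-\beta}$ employed in Theorem \ref{AdGulGenRszOrlMorNecG}) yields the claimed pointwise bound.

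Given this pointwise estimate, I take $L^{\Psi}(B)$- or ${\rm W}L^{\Psi}(B)$-norms on both sides over an arbitrary ball $B$, invoke Lemma \ref{lem:180314-6} to rewrite $\||Mf|^{\beta}\|_{L^\Psi(B)}=\|Mf\|_{L^\Phi(B)}^\beta$ (and its weak analog), multiply by $\eta(r)^{-1}\Psi^{-1}(r^{-n}) = \bigl[\varphi(r)^{-1}\Phi^{-1}(r^{-n})\bigr]^\beta$, and take the supremum over balls. Theorem \ref{cor4.4.} then produces the weak bound unconditionally, and the strong bound whenever $\Phi\in\nabla_2$. For the necessity, the trivial pointwise inequality \eqref{trpwesfr} gives $\rho(r)\,\chi_{B(0,r)}\le M_\rho\chi_{B(0,2r)}$; taking ${\rm W}\mathcal{M}^{\Psi,\eta}$-norms, applying Lemma \ref{charOrlMor} together with the doubling of $\varphi$, and using the boundedness hypothesis yields
\begin{equation*}
 \frac{\rho(r)}{\eta(r)} \lesssim \|M_\rho\chi_{B(0,2r)}\|_{{\rm W}\mathcal{M}^{\Psi,\eta}} \lesssim \|\chi_{B(0,2r)}\|_{\mathcal{M}^{\Phi,\varphi}} \lesssim \frac{1}{\varphi(r)},
\end{equation*}
which reduces to \eqref{condAdamsfr} via $\eta(r)=\varphi(r)^\beta$.

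The main obstacle is the Hedberg-type pointwise inequality, in particular justifying the final balancing step when the range of $\varphi$ does not exhaust $(0,\infty)$; this is handled exactly as in Theorem \ref{AdGulGenRszOrlMorNecG}, since the minimax identity above is a pointwise algebraic fact valid for any positive $A,B$ and does not require surjectivity of $\varphi$. Once this step is in hand, the rest of the argument is completely parallel to the Adams-type result for $I_\rho$ and exploits only already-proved facts about $M$ on $\mathcal{M}^{\Phi,\varphi}({\mathbb R}^n)$ and the scaling of Luxemburg-type norms.
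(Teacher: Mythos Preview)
Your proposal is correct and follows essentially the same route as the paper. The paper splits $f=f\chi_{B(x,r)}+f\chi_{{\mathbb R}^n\setminus B(x,r)}$ and applies Lemma~\ref{swkshr} to the near part, whereas you split the supremum defining $M_\rho f(x)$ at the threshold radius~$r$; these are equivalent bookkeeping choices leading to the same two-term bound $M_\rho f(x)\lesssim \rho(r)Mf(x)+\|f\|_{\mathcal{M}^{\Phi,\varphi}}\sup_{t>r}\rho(t)\varphi(t)$, after which both arguments invoke \eqref{condAdamsfr}, the balancing trick from \cite{SawSugTan2}, Lemma~\ref{lem:180314-6}, Theorem~\ref{cor4.4.}, and \eqref{trpwesfr} with Lemma~\ref{charOrlMor} for the necessity.
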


As before, we start with an auxiliary pointwise estimate.
\begin{lemma}
Under the assumption of Theorem \ref{thm:180314-11}
including $(\ref{condAdamsfr})$,
there exists a positive constant $C$ such that,
for all $f\in\mathcal{M}^{\Phi,\varphi}({\mathbb R}^n)$ and all $x\in{\mathbb R}^n$,
\begin{equation}\label{poiwseestfrmax}
M_{\rho} f(x) \le C(Mf(x))^{\beta} \, \|f\|_{\mathcal{M}^{\Phi,\varphi}}^{1-\beta}.
\end{equation}
\end{lemma}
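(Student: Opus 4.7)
The plan is to mirror the pointwise estimate derived for $I_\rho$ in the preceding Adams-type lemma, using a two-scale splitting of the supremum defining $M_\rho$. Fix $x\in{\mathbb R}^n$ and a non-zero $f\in\mathcal{M}^{\Phi,\varphi}({\mathbb R}^n)$; for an auxiliary $r>0$ to be chosen later, I would decompose
\[
M_\rho f(x)\le\sup_{0<t\le r}\frac{\rho(t)}{|B(x,t)|}\int_{B(x,t)}|f(y)|\,dy+\sup_{t>r}\frac{\rho(t)}{|B(x,t)|}\int_{B(x,t)}|f(y)|\,dy.
\]

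For the local part $t\le r$, monotonicity of $\rho$ gives $\rho(t)\le\rho(r)$, and the averages are controlled by $Mf(x)$, so this contribution is at most $\rho(r)Mf(x)$. For the tail $t>r$ I would apply Lemma~\ref{lemHold} followed by the definition of the Orlicz--Morrey norm,
\[
\frac{1}{|B(x,t)|}\int_{B(x,t)}|f(y)|\,dy\lesssim\Phi^{-1}(t^{-n})\,\|f\|_{L^\Phi(B(x,t))}\le\varphi(t)\,\|f\|_{\mathcal{M}^{\Phi,\varphi}},
\]
so that the tail is bounded by $\sup_{t>r}\rho(t)\varphi(t)\,\|f\|_{\mathcal{M}^{\Phi,\varphi}}$. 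The hypothesis \eqref{condAdamsfr} gives $\rho(t)\varphi(t)\lesssim\varphi(t)^\beta$; since $\varphi$ is almost decreasing and $\beta\in(0,1)$, one has $\varphi(t)^\beta\lesssim\varphi(r)^\beta$ on $t>r$, and similarly $\rho(r)\lesssim\varphi(r)^{\beta-1}$. Combining the two parts, for every $r>0$,
\[
M_\rho f(x)\lesssim\varphi(r)^{\beta-1}\,Mf(x)+\varphi(r)^\beta\,\|f\|_{\mathcal{M}^{\Phi,\varphi}}.
\]

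The last step is to optimize in $r$: the right-hand side balances when $\varphi(r)\sim Mf(x)/\|f\|_{\mathcal{M}^{\Phi,\varphi}}$, at which point both terms are of order $(Mf(x))^\beta\,\|f\|_{\mathcal{M}^{\Phi,\varphi}}^{1-\beta}$, which is the desired bound \eqref{poiwseestfrmax}. Equivalently, one can argue $M_\rho f(x)\lesssim\sup_{s>0}\min\{s^{\beta-1}Mf(x),\,s^\beta\|f\|_{\mathcal{M}^{\Phi,\varphi}}\}$ and compute the sup explicitly, as was done in the $I_\rho$ version.

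I expect the only non-routine point to be the optimization step, where one must use the doubling of $\varphi$ (inherited from $\varphi\in\mathcal{G}_\Phi$) to ensure the balancing choice of $r$ is attainable up to harmless constants; the degenerate cases ($Mf(x)\in\{0,\infty\}$ or $\Phi^{-1}(0)>0$) can be treated by the same limiting considerations used in the proof of Lemma~\ref{lem:180314-5}.
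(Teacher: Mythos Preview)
Your proposal is correct and follows essentially the same route as the paper. The only cosmetic difference is that the paper splits $f=f\chi_{B(x,r)}+f\chi_{{\mathbb R}^n\setminus B(x,r)}$ and invokes Lemma~\ref{swkshr} for the local piece, whereas you split the supremum over $t$ directly and use only that $\rho$ is increasing; both lead to the same inequality $M_\rho f(x)\lesssim \rho(r)Mf(x)+\|f\|_{\mathcal{M}^{\Phi,\varphi}}\sup_{t>r}\rho(t)\varphi(t)$ and then the identical $\sup\min$ balancing argument from \cite{SawSugTan2}.
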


\begin{proof}
For arbitrary ball $B=B(x,r)$ we represent $f$ as
\begin{equation*}
f=f_1+f_2, \ \quad f_1=f\chi _{B},\quad
 f_2=f-f_1, \ \quad r>0,
\end{equation*}
so that
$$
M_\rho f(x)\le M_\rho f_1(x)+M_\rho f_2(x).
$$

Hence by Lemma~\ref{lemHold},
\begin{equation*}
\begin{split}
M_{\rho} f_2(x) & = \sup_{t>0}\frac{\rho(t)}{|B(x,t)|} \int_{B(x,t)\setminus (B(x,r))}|f(z)|d z
\\
& \le \, \sup_{t>r}\frac{\rho(t)}{|B(x,t)|} \int_{B(x,t)}|f(z)|d z
\\
&\lesssim \sup_{t>r} \rho(t) \, \Phi^{-1}(|B(x,t)|^{-1}) \, \|f\|_{L^{\Phi}(B(x,t))}\\
&\lesssim \|f\|_{\mathcal{M}^{\Phi,\varphi}} \sup_{t>r} \rho(t) \, \varphi(t).
\end{split}
\end{equation*}
Consequently by Lemma \ref{swkshr} 
we have
\begin{equation*}
M_\rho f(x) \lesssim
\rho(r) Mf(x)+\|f\|_{\mathcal{M}^{\Phi,\varphi}} \sup_{t>r} \rho(t) \, \varphi(t).
\end{equation*}

Thus, using the technique in \cite[p. 6492]{SawSugTan2} as before and \eqref{condAdamsfr} we obtain
\begin{align*}
M_{\rho} f(x) & \lesssim  \min \{ \varphi(r)^{\beta-1} Mf(x), \varphi(r)^{\beta} \|f\|_{\mathcal{M}^{\Phi,\varphi}}\}
\\
& \lesssim \sup\limits_{s>0} \min \{ s^{\beta-1} Mf(x), s^{\beta} \|f\|_{\mathcal{M}^{\Phi,\varphi}}\}
\\
& = (Mf(x))^{\beta} \, \|f\|_{\mathcal{M}^{\Phi,\varphi}}^{1-\beta},
\end{align*}
where we have used that the supremum is achieved when the minimum parts are balanced.
This shows \eqref{poiwseestfrmax}.
\end{proof}

We prove Theorem \ref{thm:180314-11}.

\begin{proof}[Proof of Theorem \ref{thm:180314-11}]
By using inequality \eqref{poiwseestfrmax} and Lemma \ref{lem:180314-6}
we have,
for all balls $B$,
$$
\|M_{\rho} f\|_{WL^{\Psi}(B)}
\lesssim \|(Mf)^{\beta}\|_{WL^{\Psi}(B)}\, \|f\|_{\mathcal{M}^{\Phi,\varphi}}^{1-\beta}
= \|Mf\|_{WL^{\Phi}(B)}^{\beta}\, \|f\|_{\mathcal{M}^{\Phi,\varphi}}^{1-\beta}.
$$
%
Consequently, by using the boundedness of the maximal operator $M$, we get
\begin{align*}
& \eta(r)^{-1}\Psi^{-1}(|B|^{-1})\|M_{\rho} f\|_{WL^{\Psi}(B)}
\lesssim
\eta(r)^{-1}\Psi^{-1}(|B|^{-1})
\|Mf\|_{WL^{\Phi}(B)}^{\beta}\, \|f\|_{\mathcal{M}^{\Phi,\varphi}}^{1-\beta}\\
&=
\left(\varphi(r)^{-1}\Phi^{-1}(|B|^{-1})
\|Mf\|_{WL^{\Phi}(B)}\right)^{\beta}\, \|f\|_{\mathcal{M}^{\Phi,\varphi}}^{1-\beta}
\lesssim
\|f\|_{\mathcal{M}^{\Phi,\varphi}}.
\end{align*}
By taking the supremum over all balls $B$, we get the desired result.
Moreover, if $\Phi\in\nabla_2$, then we have the strong type estimate.

We will now prove the necessity. We utilize
(\ref{trpwesfr}).
By Lemmas \ref{charorlc} and \ref{charOrlMor}, we have
\begin{align*}
\rho(r)&\lesssim \Psi^{-1}(r^{-n})\|M_{\rho} \chi_{B(0,2r)}\|_{WL^{\Psi}(B(0,r))}
\lesssim \eta(r)\|M_{\rho} \chi_{B(0,2r)}\|_{{\rm W}\mathcal{M}^{\Psi,\eta}}
\\
&\lesssim \eta(r)\|\chi_{B(0,2r)}\|_{\mathcal{M}^{\Phi,\varphi}}\lesssim  \frac{\eta(r)}{\varphi(r)}
\lesssim \varphi(r)^{\beta-1}.
\end{align*}
Then the proof is complete.
\end{proof}

\

{\bf Acknowledgements.} {The research of F. Deringoz was partially supported by the grant of Ahi Evran University Scientific Research Project (FEF.A4.18.019). 
The research of V. Guliyev was partially supported by the grant of 1st Azerbaijan--Russia Joint Grant Competition (Agreement number no. EIF-BGM-4-RFTF-1/2017-21/01/1) 
and by the Ministry of Education and Science of the Russian Federation (the Agreement No. 02.a03.21.0008).
Eiichi Nakai was supported
by Grant-in-Aid for Scientific Research (B), No. 15H03621, Japan Society for the Promotion of Science. 
Yoshihiro Sawano was supported by Grant-in-Aid for Scientific Research (C) (16K05209), the Japan Society for
the Promotion of Science and by People’s Friendship University of Russia.
}

\

\bigskip

\noindent
Fatih Deringoz \\
Department of Mathematics, Ahi Evran University, Kirsehir, Turkey \\
deringoz@hotmail.com
\\[3ex]
\noindent
Vagif S. Guliyev \\
Department of Mathematics, Dumlupinar University, Kutahya, Turkey \\
+Institute of Mathematics and Mechanics, Baku, Azerbaijan \\
+S.M. Nikolskii Institute of Mathematics at RUDN University, 
Moscow, Russia 117198 \\
vagif@guliyev.com
\\[3ex]
\noindent
Eiichi Nakai \\
Department of Mathematics, Ibaraki University, Mito, Ibaraki 310-8512, Japan \\
eiichi.nakai.math@vc.ibaraki.ac.jp  
\\[3ex]
\noindent
Yoshihiro Sawano \\
Department of Mathematics and Information Sciences, Tokyo Metropolitan University, Minami-Ohsawa 1-1, Hachioji, Tokyo, 192-0397, Japan \\
+S.M. Nikolskii Institute of Mathematics at RUDN University, Moscow, Russia 117198 \\
yoshihiro-sawano@celery.ocn.ne.jp 
\\[3ex]
\noindent
Minglei Shi \\
Department of Mathematics, Ibaraki University, Mito, Ibaraki 310-8512, Japan \\
18nd206l@vc.ibaraki.ac.jp, stfoursml@gmail.com


\begin{thebibliography}{99}
 \bibitem{Adams75-2}
Adams, D.R.:
A note on Riesz potentials,
Duke Math. J., {\bf 42}, 765--778 (1975)


\bibitem{Cianchi99}
Cianchi, A.:
Strong and weak type inequalities for some classical operators in Orlicz spaces.
J. London Math. Soc. (2) {\bf 60}, no. 1, 187–-202 (1999)

\bibitem{DerGulSam} Deringoz, F., Guliyev, V.S., Samko, S.:
Boundedness of maximal and singular operators on generalized Orlicz--Morrey spaces.
Operator Theory, Operator Algebras and Applications, Series: Operator Theory: Advances and Applications Vol. \textbf{242}, 139--158 (2014)

\bibitem{DerGulHasFr} Deringoz, F., Guliyev, V.S., Hasanov, S.G.:
A characterization for Adams
type boundedness of the fractional
maximal operator on generalized Orlicz--Morrey spaces. Integral Transforms Spec. Funct. \textbf{28} (4), 284--299 (2017)

\bibitem{GulDerPot}  Deringoz, F., Guliyev, V.S., Hasanov, S.G.:
Characterizations for the Riesz potential and its commutators on generalized Orlicz--Morrey spaces.
J. Inequal. Appl. \textbf{2016} (248), ~ DOI: 10.1186/s13660-016-1192-z (2016)

\bibitem{GulHasSawNak} Guliyev, V.S., Hasanov, S.G., Sawano, Y.,
Noi, T.:  
Non-smooth Atomic Decompositions for Generalized
Orlicz--Morrey Spaces of the Third Kind,
Acta Appl. Math. \textbf{145} (2016), 133--174.

%
\bibitem{GIKS2015} Guliyev, V.S., Ismayilova, A.F., Kucukaslan, A., Serbetci, A.: 
Generalized Fractional Integral Operators on Generalized Local Morrey Spaces. 
Journal of Function Spaces, vol. 2015, Article ID 594323, 8 pp

\bibitem{ErGunNSaw} Eridani A., Gunawan H., Nakai E., Sawano Y.:
Characterizations for the generalized fractional integral operators on Morrey spaces. Math. Inequal. Appl. \textbf{17} (2), 761--777 (2014)

\bibitem{GST}
Gala S.,
Sawano Y.,
Tanaka H.:
A remark on two generalized Orlicz--Morrey spaces,
J. Approx. Theory {\bf 198}, 1--9 (2015)

\bibitem{HNS}
Hakim D.I.,
Nakai E.,
Sawano Y.:
Generalized fractional maximal operators and vector-valued inequalities
on generalized Orlicz--Morrey spaces,
Rev. Mat. Complut. {\bf 29}, no. 1, 59--90 (2016).

\bibitem{Hasto15}
H\"{a}sto, Peter A. 
The maximal
operator on generalized Orlicz spaces. J. Funct. Anal. {\bf269}, no. 12,
4038--4048 (2015)
\bibitem{Hedberg} Hedberg, L.I: On certain convolution inequalities. Proc. Amer. Math. Soc. \textbf{36}, 505--510 (1972)

\bibitem{KaNa18}
Kawasumi R., Nakai E.,
Pointwise multipliers on weak Orlicz spaces.\\
https://arxiv.org/abs/1811.02858

\bibitem{KokKrbec} Kokilashvili, V., Krbec M.M.:
Weighted Inequalities in Lorentz and Orlicz Spaces, World Scientific, Singapore, 1991.


\bibitem{Maligranda89}
Maligranda, L.:
Orlicz spaces and interpolation,
Semin\'{a}rios de Matem\'{a}tica, ISSN 0103-5258; 5 (1989).

\bibitem{Mizuta-Nakai-Ohno-Shimomura2010JMSJ}
Mizuta, Y.,
Nakai, E.,
Ohno, T.,
Shimomura T.:
Boundedness of fractional integral operators on Morrey spaces and Sobolev embeddings
for generalized Riesz potentials.
J. Math. Soc. Japan 62, No.~3, 707--744 (2010)

\bibitem{Nakai2000SCM}
Nakai, E.:
A characterization of pointwise multipliers on the Morrey spaces.
Sci. Math. \textbf{3} (3), No.~3, 445--454 (2000)

\bibitem{Nakai2000ISAAC}
Nakai, E.:
On generalized fractional integrals in the Orlicz spaces.
Proceedings of the Second ISAAC Congress, Vol. 1 (Fukuoka, 1999), 75--81,
Int. Soc. Anal. Appl. Comput., 7, Kluwer Acad. Publ., Dordrecht, 2000.

\bibitem{Nakai2001Taiwan}
Nakai, E.:
On generalized fractional integrals.
Taiwanese J. Math. \textbf{5}, 587--602 (2001)

\bibitem{Nakai2001SCMJ}
Nakai, E.:
On generalized fractional integrals in the Orlicz spaces
on spaces of homogeneous type.
Sci. Math. Jpn. 54, 473--487 (2001)

\bibitem{Nakai2002Lund}
Nakai, E.:
On generalized fractional integrals on the weak Orlicz spaces,
${\rm BMO}_{\phi}$, the Morrey spaces and the Campanato spaces.
Function spaces, interpolation theory and related topics (Lund, 2000),
de Gruyter, Berlin, 2002, 389--401.

\bibitem{Nakai2004KIT}
Nakai, E.:
Generalized fractional integrals on Orlicz--Morrey spaces.
In: Banach and Function Spaces.
(Kitakyushu, 2003), Yokohama Publishers, Yokohama, 323--333 (2004).

\bibitem{Nakai-Sumitomo2001SCMJ}
Nakai, E.,
Sumitomo, H.:
On generalized Riesz potentials and spaces of some smooth functions.
Sci. Math. Jpn. 54, No.~3, 463--472 (2001)

\bibitem{ONeil1965}
O'Neil, R.:
Fractional integration in Orlicz spaces. I.
Trans. Amer. Math. Soc. 115, 300--328  (1965)

\bibitem{Pustylnik99}
E. Pustylnik, Generalized potential type operators on
rearrangement invariant spaces. Israel Math. Conf. Proc. 13, No 3,
161--171 (1999)

\bibitem{Perez1994}
P\'erez, C.:
Two weighted inequalities for potential and fractional type maximal operators.
Indiana Univ. Math. J. 43, 663--683 (1994)

\bibitem{RaRe91}
Rao M.M., Ren, Z.D.:
Theory of Orlicz Spaces, Dekker 1991.

\bibitem{Spanne65}
Spanne, S.:
Some function spaces defined using the mean oscillation over cubes,
Ann. Scuola Norm. Sup. Pisa \textbf{19} (3), 593--608 (1965)

\bibitem{Sawano18}
Sawano, Y.,
Theory of Besov spaces,
Development in Mathematics {\bf 56}.

\bibitem{SawSugTan2} Sawano, Y., Sugano, S., Tanaka, H.:
Generalized fractional integral operators and fractional maximal operators in the framework of Morrey spaces.
Trans. Amer. Math. Soc. \textbf{363} (12), 6481--6503 (2011)

\bibitem{SawSugTan}
Sawano, Y., Sugano, S., Tanaka, H.:
Orlicz--Morrey spaces and fractional operators.
Potential Anal. \textbf{36} (4), 517--556 (2012)

\bibitem{Weiss50}
Weiss, G.:
A note on Orlicz spaces,
Portugal Math. {\bf 15}, 35--47 (1950)

\end{thebibliography}
\end{document}